\journalname{Journal of Complexity}
\numberwithin{equation}{section}
\newcommand{\R}{\mathbb{R}}
\newcommand{\N}{\mathbb{N}}
\newcommand{\E}{\mathbb{E}}
\renewcommand{\P}{\mathbb{P}}
\newcommand{\tr}{\operatorname{trace}}
\newcommand{\dd}{\text{d}}
\newtheorem{theorem}{Theorem}[section]
\newtheorem{definition}[theorem]{Definition}
\newtheorem{lemma}[theorem]{Lemma}
\newtheorem{corollary}[theorem]{Corollary}
\newtheorem{remark}[theorem]{Remark}
\newtheorem{assumption}[theorem]{Assumption}
\begin{document}

\begin{frontmatter}



\dochead{}

\title{Linear implicit approximations of 
invariant measures of semi-linear SDEs with non-globally Lipschitz coefficients\tnoteref{t1}
}


\tnotetext[t1]{This work was supported by Natural Science Foundation of China (12071488, 12371417, 11971488)
                and Natural Science Foundation of Hunan Province (2020JJ2040). }

\author[label1]{Chenxu Pang}
\ead{c.x.pang@csu.edu.cn}
\author[label1]{Xiaojie Wang}
\ead{x.j.wang7@csu.edu.cn}
\author[label2]{Yue Wu\corref{cor1}}
\ead{yue.wu@strath.ac.uk}

\address[label1]{School of Mathematics and Statistics, HNP-LAMA, Central South University, Changsha, Hunan, P. R. China}
\address[label2]{Department of Mathematics and Statistics, University of Strathclyde, Glasgow G1 1XH, UK}

\cortext[cor1]{Corresponding author}
\begin{abstract}
        This article
         investigates the weak approximation towards the invariant measure of semi-linear stochastic differential equations (SDEs) under non-globally Lipschitz coefficients. For this purpose, 
         we propose a linear-theta-projected Euler (LTPE) scheme, which also admits an invariant measure, to handle the potential influence of the linear stiffness.
         Under certain assumptions, both the SDE and the corresponding LTPE method are shown to converge exponentially to the underlying invariant measures, respectively. Moreover, with time-independent regularity estimates for the corresponding Kolmogorov equation, the weak error between the numerical invariant measure and the original one can be guaranteed with convergence of order one. 
         In terms of computational complexity, 
         the proposed ergodicity preserving scheme with 
         the nonlinearity explicitly treated has 
         a significant advantage over the ergodicity preserving implicit Euler method in the literature.
         Numerical experiments are provided to verify our theoretical findings.

\end{abstract}

\begin{keyword}
stochastic differential equations \sep  invariant measure \sep
weak convergence \sep projected method \sep Kolmogorov equations
\MSC 60H35 \sep 37M25 \sep 65C30

\end{keyword}


\end{frontmatter}


\tableofcontents
\section{Introduction}	\label{section: introduction}
The primary objective of this paper is to study the invariant measures of semi-linear
stochastic differential equations (SDEs) with multiplicative noise 
and their weak approximations.  Given the probability space $(\Omega,\mathcal{F}, \mathbb{P})$, 
we consider the following $\mathbb{R}^d$-valued semi-linear SDEs of It\^o type:
\begin{equation} \label{eq:semi-linear-SODE}
\begin{split}
\left\{
    \begin{array}{ll}
    \dd X_t  = A X_t + f ( X_t ) \, \dd t + g ( X_t ) \,\dd W_t,
    \quad
    t \in (0, T],
    \\
   X_0 = x_0,
 \end{array}\right.
 \end{split}
\end{equation}
where $A \in \mathbb{R}^{d \times d}$ represents a negative definite  matrix, $f \colon \mathbb{R}^d \rightarrow \mathbb{R}^d$ is the drift coefficient function,
$g \colon \mathbb{R}^d \rightarrow \mathbb{R}^{d \times m}$ is the diffusion coefficient function, and $W_{\cdot} = \left(W_{1,\cdot}, \dots, W_{m,\cdot} 
\right)^{T} :[0, T] \times \Omega \rightarrow \mathbb{R}^{m}$ denotes the $\mathbb{R}^{m}$-valued standard Brownian motion with respect to $\left\{\mathcal{F}_{t}\right\}_{t \in [0, T]}$.
Moreover,  the initial data $x_{0}: \Omega \rightarrow \mathbb{R}^{d}$ is assumed to be $\mathcal{F}_{0}$-measurable.
This form covers a broad class of SDEs which are used to model real applications,  for instance, the stochastic Ginzburg–Landau equation (see \eqref{equation:ginzburg-landau-model}), the mean-reverting model (see \eqref{eq:mean-reverting-model} or \cite{liu2023backward,2013Convergence}) and space discretization of stochastic partial differential equations (SPDEs) (see \eqref{eq:SPDE-SODE-system} or \cite{wang2023mean,liu2021strong}).

In this paper, we pay particular attention to a class of SDEs that, under certain conditions, converge exponentially to a unique invariant measure $\pi$.
Evaluating the expectation of some function $\varphi$ with respect to that invariant measure $\pi$ is of great interest in mathematical biology, physics and Bayesian statistics:
\begin{equation} \label{introduction:invariant-pi}
    \pi(\varphi):=\int_{\mathbb{R}^d} \varphi(x) \pi(\dd x)=\lim _{t \rightarrow \infty} \mathbb{E}\left[\varphi\left(X_t\right)\right].
\end{equation}
%
Generally speaking, it is not easy to obtain either the analytical solutions of SDEs or the explicit expression of the invariant measure.  The study of the numerical approximations of $\pi$ therefore receives increased attention.  Previous research in this field typically focuses on stochastic differential equations (SDEs) characterized by coefficients that exhibit global Lipschitz continuity \cite{milstein2013stochastic}. Such a strong condition is however rarely satisfied by SDEs from applications. On the other hand, conventional numerical tools lose their powers when attempting to simulate SDEs under relaxed conditions. For example, as claimed in \cite{hutzenthaler2011strong,mattingly2002ergodicity}, for a large class of SDEs with super-linear growth coefficients, the widely-used Euler-Maruyama scheme leads to divergent numerical approximations in both finite and infinite time intervals.
A natural question thus arises as to how to design the numerical scheme of the SDE \eqref{eq:semi-linear-SODE} under a stiff condition caused by the linear operator in order to well approximate its invariant measure $\pi$ and perform the error analysis.

Recent years have seen a proper growth of the literature on this topic, and it is worth mentioning that a majority of existing works analyze numerical approximations of invariant measures from SDEs via strong approximation error bounds (see \cite{li2019explicit,liu2023backward,fang2018adaptive,mattingly2002ergodicity,majka2020nonasymptotic,neufeld2022non}). The direct study of weak approximation errors (see \cite{brehier2014approximation,brehier2020approximation,chen2017approximation,chen2022stochastic}), which hold particular relevance in fields like financial engineering and statistics, is still in its early stages. In \cite{chen2022stochastic}, the authors analyzed the backward Euler method of SDEs with piecewise continuous arguments (PCAs), where the drift is dissipative and the diffusion is globally Lipschitz, and recovered a time-independent convergence of order one. The author in \cite{brehier2020approximation} studied the tamed Euler scheme for ergodic SDEs with one-sided Lipschitz continuous drift
coefficient and additive noise, and gave a moment bound that still depends on terminal time. 
We also mention that the authors in \cite{abdulle2014high}  provided new suﬃcient conditions for a numerical method to approximate with high order of accuracy of the invariant measure of an ergodic SDE, independently of the weak order of accuracy of the method.

Each method exhibits drawbacks when approximating \eqref{introduction:invariant-pi} weakly. Implicit methods by their nature have better stability but at a price of escalated complexity; explicit methods such as the tamed methods (see \cite{hutzenthaler2012strong,wang2013tamed}) on the other hand may not preserve the long time property numerically since the taming factor has no positive lower bound. 
Even though the explicit projected method \cite{szpruch2018} does keep the asymptotic stability, it usually faces a severe stepsize restriction due to stability issues from solving stiff linear systems; to apply the truncated methods \cite{li2019explicit} to approximate the invariant distribution, one has to construct a strictly increasing function to control the growth of both drift and diffusion and to find its inverse version.
Besides, the weak error analysis of such schemes is,  to the best of our knowledge, still an open problem. We, therefore, aim to propose a family of linear-implicit methods that not only address the challenges posed by stiff systems but also preserve ergodicity and achieve weak convergence towards the invariant measure admitted by SDEs \eqref{eq:semi-linear-SODE}.

More formally, 
our scheme, called the linear-theta-implicit-projected Euler (LTPE) method, with a method parameter $\theta \in [0,1]$ on a uniform timestep size $h$ is given as follows,
\begin{equation} \label{introduction:LTPE-scheme}
\begin{aligned}
Y_{n+1} -  \theta A Y_{n+1} h 
&= \mathscr{P}(Y_{n}) + (1-\theta) A \mathscr{P}(Y_{n}) h 
+ f\big(\mathscr{P}(Y_{n})\big) h 
+ g\big(\mathscr{P}(Y_{n})\big) \Delta W_{n}, 
\quad Y_{0}=X_{0},
\end{aligned}
\end{equation}
where $\Delta W_{n}:=W_{t_{n+1}}-W_{t_{n}}$, $n \in\{0,1,2, \ldots, N-1\}$, $N \in \mathbb{N}$,
and $\mathscr{P}: \mathbb{R}^{d} \rightarrow \mathbb{R}^{d}$ is the projected operator denoted as
\begin{equation}\label{eq:projected-operator}
\mathscr{P}(x)
:=\min\left\{
1, h^{-\frac{1}{2\gamma}} \|x \|^{-1}
\right\} x, 
\quad \forall x \in \mathbb{R}^{d},
\end{equation}
with $\gamma$ being determined in Assumption \ref{assumption:growth-condition-of-frechet-derivatives-of-drift-and-diffusion} later.

We point out that the scheme above can be derived from the stochastic theta methods \cite{mao2013strong,wang2020mean} used to deal with different models.
Also, note that the parameter $\theta$ is pre-determined. 
Where there is a stiff system, we are able to treat the linear operator $A$ implicitly (i.e. $\theta=1$) without sacrificing numerical efficiency. 
And if one is working with the non-stiff system, using the explicit numerical scheme (i.e. $\theta=0$) would be more appropriate. In addition, we follow the projected technique, previously used in  \cite{beyn2016stochastic,beyn2017stochastic} for SDEs in finite time interval, to prevent the nonlinear drift and diffusion from producing extraordinary large values. Under certain conditions, 
for $ \forall \zeta \in L^{8\gamma+2}(\Omega,\mathbb{R}^{d})$, where $\gamma$ is given by Assumption \ref{assumption:growth-condition-of-frechet-derivatives-of-drift-and-diffusion}, the projected process $\mathscr{P}(x)$ converges strongly to the original random variable $\zeta$ of order 2 (see Lemma \ref{lemma:error-estimate-between-x-and-projected-x} or \cite{beyn2017stochastic}), i.e.
\begin{equation} \label{introduction:error-estimate-of-the-projection}
\|
\mathscr{P}(\zeta) - \zeta 
\|_{L^{2}(\Omega, \mathbb{R}^{d})} 
\leq Ch^{2}.
\end{equation}
Compared with the truncated method in \cite{li2019explicit}, the implementation of the LTPE method in \eqref{introduction:LTPE-scheme} is more straightforward, where the projected operator we have chosen depends only on the growth of the drift and diffusion. Besides, when facing with linear-stiff systems, our method with $\theta = 1$ may not suffer from too strict stepsize restriction.


To show the main result in Theorem \ref{thm:main}, the derivations of the whole paper are organised in the following way: under Assumption \ref{assumption:one-side-Lipschitz-condition-for-linear-operator}-\ref{assumption:growth-condition-of-frechet-derivatives-of-drift-and-diffusion}, which can be regarded as a kind of dissipative condition, we follow \cite{da2006introduction} to present the existence and uniqueness of the invariant measures of both SDEs \eqref{eq:semi-linear-SODE} and the LTPE scheme \eqref{introduction:LTPE-scheme}, respectively in Theorem \ref{theorem:invariant-measure-of-sode} and Theorem \ref{theorem:invariant-measure-of-LTPE-method}; the main result regarding weak error analysis, presented in Theorem \ref{theorem:time-independent-weak-error-analysis}, is derived based on the associated Kolmogorov equation \eqref{equation:kolmogorov-equation} of SDE \eqref{eq:semi-linear-SODE}.
However,
one may confront two main challenges. The first one is to get a couple of priori estimates that are independent of time and stepsize, including the uniform moment bounds of the LTPE method \eqref{introduction:LTPE-scheme} and the time-independent regularity estimates of the Kolmogorov equation.  Another one is the implicitness and discontinuity of the proposed LTPE method \eqref{introduction:LTPE-scheme}, which results in further difficulties in handling the weak error via the kolmogorov equation.
Different techniques are used to circumvent these obstacles. Discretization strategy based on the binomial theorem is adopted to obtain the uniform moment bounds of the LTPE scheme (see Lemma \ref{lemma:uniform-moments-bound-of-the-LTPE-method}), and we make use of the It\^o formula and the variational approach to obtain the time-independent regularity estimates of the Kolmogorov equation (see Lemma \ref{lemma:estimate-of-u-and-its-derivatives} and Corollary \ref{lemma:contractivity-of-u}). To deal with possible implicitness and discontinuity of the LTPE scheme \eqref{introduction:LTPE-scheme}, we introduce its continuous-version $\{\mathbb{Z}^{n}(t)\}_{t\in  [t_{n}, t_{n+1}]}$ with $n\in \{0,1,\dots,N-1 \}$, $N\in \mathbb{N}$ as 
\begin{equation} \label{intro:continuous-version-of-the-numerical-scheme}
\begin{split}
\left\{
    \begin{array}{ll}
    \mathbb{Z}^{n}(t) 
    = 
    \mathbb{Z}^{n}(t_{n}) 
    + F\big(\mathscr{P}(Y_{n})\big)(t-t_{n}) 
    + g\big(\mathscr{P}(Y_{n})\big)(W_{t}-W_{t_{n}}) ,
    \\
   \mathbb{Z}^{n}(t_{n})
   := \mathscr{P}(Y_{n}) - \theta A  \mathscr{P}(Y_{n}) h,
 \end{array}\right.
 \end{split}
\end{equation}
where $F(x):=Ax+f(x), \forall x \in \mathbb{R}^{d}$. 
It can be easily observed that $\mathbb{Z}^{n}(t_{n+1})  = Y_{n+1} - \theta A  Y_{n+1} h$.
%
In order to estimate the numerical approximation error of invariant measure, we separate the weak error $\big| \mathbb{E}\left[\varphi(Y_{N}) \right] - \mathbb{E}\left[\varphi(X_{T}) \right]\big|$, i.e., $\big| \mathbb{E}\left[u(T, x_{0}) \right] - \mathbb{E}\left[u(0, Y_{N}) \right] \big|$ based on the associated Kolmogorov equation (see \eqref{equation:kolmogorov-equation} or \cite[Chapter 1]{cerrai2001second}),  into three parts,
\begin{equation}
\begin{aligned}
\big| 
\mathbb{E}\left[u(T, x_{0}) \right] 
- \mathbb{E}\left[u(0, Y^{x_{0}}_{N}) \right]
\big| 
&\leq 
\underbrace{
\big| 
\mathbb{E}\left[u(0, Y_{N}) \right] 
- \mathbb{E}\left[u(0, Z_{N}) \right]
\big|
}_{:= \text{Error}_{1}} 
+
\underbrace{
\big|  
\mathbb{E}\left[u(T, Z_{0}) \right] 
- \mathbb{E}\left[u(T, x_{0}) \right] 
\big|
}_{:= \text{Error}_{2}} \\
&\quad + \underbrace{
\big| \mathbb{E}\left[u(0, Z_{N}) \right] 
- \mathbb{E}\left[u(T, Z_{0}) \right]\big|
}_{:= \text{Error}_{3}},
\end{aligned}
\end{equation}
where, for short, we denote $Z_{n}:=Y_{n}-\theta A Y_{n}h$.
Thanks to the fact that $Z_{n+1}=\mathbb{Z}^{n}(t_{n+1})$ and the time-independent regularity estimates of the Kolmogorov equation, one can treat $\text{Error}_{1}$ and $\text{Error}_{2}$ directly and get $\max\{\text{Error}_{1} , \text{Error}_{2}\} = \mathcal{O}(h)$. For $\text{Error}_{3}$, we take full advantage of 
\eqref{intro:continuous-version-of-the-numerical-scheme} and show further decomposition as
\begin{equation} \label{introduction:decomposition-of-error-3}
\begin{aligned}
\text{Error}_{3} 
&\leq 
\left|
\sum_{n=0}^{N-1}   
\mathbb{E}\left[
u\big(T-t_{n}, \mathbb{Z}^{n}(t_{n})\big) 
\right] 
- \mathbb{E}\big[u(T-t_{n}, Z_{n}) \big] 
\right|\\
&\quad + 
\left|
\sum_{n=0}^{N-1}  
\mathbb{E}\left[
u\big(T-t_{n+1}, \mathbb{Z}^{n}(t_{n+1})\big) 
\right] 
- \mathbb{E}\left[
u\big(T-t_{n}, \mathbb{Z}^{n}(t_{n})\big) 
\right] 
\right|.
\end{aligned}
\end{equation}
The first term on the right hand side of \eqref{introduction:decomposition-of-error-3} is $\mathcal{O}(h)$ due to the regularity estimates of $u(t, \cdot)$ and \eqref{introduction:error-estimate-of-the-projection}; the second one, based on the Kolmogorov equation and the It\^o formula, can also be proved to be $\mathcal{O}(h)$ (see more details in the proof of Theorem \ref{theorem:time-independent-weak-error-analysis}).
Hence, we obtain the the uniform weak error between the invariant measures, admitted by SDE \eqref{eq:semi-linear-SODE} and the LTPE method \eqref{introduction:LTPE-scheme},  of order one eventually.

We summarize our main contributions:
\begin{itemize}
    \item A family of linear implicit numerical methods, capable of dealing with stiff linear systems and inheriting invariant measures, is presented.
    \item Time-independent weak convergence between two invariant measures inherited by SDE \eqref{eq:semi-linear-SODE} and LTPE scheme \eqref{introduction:LTPE-scheme}, respectively, is established under non-globally Lipschitz coefficients.
\end{itemize}

Some numerical tests to illustrate our findings in Section 6. Finally, the Appendix contains the detailed proof of auxiliary lemmas.
\section{Settings and main result}
Throughout this paper, we use $\N$ to denote the set of all positive integers
and let $ d,m \in \N$, $ T \in (0, \infty) $ be given. Let $\| \cdot \|$ and $ \langle \cdot, \cdot \rangle $ denote the Euclidean
norm and the inner product of vectors in $\R^d$, respectively. We use $\max\{a,b\}$ and $\min\{a,b\}$ for the maximum and  minimum values of between $a$ and $b$ respectively, and sometimes we also use a simplified notation $a\land b$ for $\min\{a,b\}$. Adopting the same notation as the vector norm, we denote $\|M\| : =\sqrt{\tr(M^{T}M)}$ as the trace norm of a matrix $M \in \R^{d \times m}$, where $M^T$ represents the transpose of a matrix $M$.
Given a filtered probability space $ \left( \Omega, \mathcal{ F }, \{ \mathcal{ F }_t \}_{ t \in [0,T] }, \P 
\right) $, we use $\E$ to mean the expectation and $L^{r} (\Omega, \R^d ), r \geq 1 $, to denote 
the family of $\R^d$-valued random variables $\xi$ satisfying   $\E[ \|\xi \|^{r}]<\infty$.
The diffusion coefficient function $g \colon \mathbb{R}^d \rightarrow \mathbb{R}^{d \times m}$ is frequently written as $g = (g_{i,j})_{d \times m} =  (g_1, g_2,..., g_m)$ for $g_{i,j} \colon \mathbb{R}^d \rightarrow \mathbb{R}$
and $g_j \colon \mathbb{R}^d \rightarrow \mathbb{R}^{d }, i \in \{ 1, 2,..., d \}, j \in \{ 1, 2,..., m \} $.
Moreover, 
we introduce a new notation $X^{x}_{t}$ for $t\in [0,T]$ denoting the solution of SDE \eqref{eq:semi-linear-SODE} satisfying the initial condition $X^{x}_{0} = X_{0}=x$. Also, let $Y^{x}_{n}$, $n\in \{0,1, \dots, N \}$, $N\in \mathbb{N}$, be an approximation of the solution of SDE \eqref{eq:semi-linear-SODE} with the initial point $Y^{x}_{0}=x$.
In addition, denote by $C_{b}(\mathbb{R}^{d})$ 
the Banach space of all uniformly continuous and bounded mappings
$\phi: \mathbb{R}^{d} \rightarrow \mathbb{R}$ endowed with the norm $\|\phi \|_{0} = \sup_{x\in \mathbb{R}^{d}} |\phi(x) |$.
For the vector-valued function $\textbf{u}: \mathbb{R}^{d} \rightarrow \mathbb{R}^{{\ell}}$,
$\textbf{u} = (
u_{(1)}, \dots, u_{({d})}
)$, its first order partial derivative is considered as the Jacobian matrix as
\begin{equation}
D \mathbf{u}=
\left(\begin{array}{ccc}
\frac{\partial u_{(1)}}{\partial x_1} & \cdots & \frac{\partial u_{(1)}}{\partial x_d} \\
\vdots & \ddots & \vdots \\
\frac{\partial u_{(\ell)}}{\partial x_1} & \cdots & \frac{\partial u_{(\ell)}}{\partial x_d}
\end{array}\right)_{\ell \times d}.
\end{equation}
For any $v_{1} \in \mathbb{R}^{d}$, one knows $D (\mathbf{u} ) v_{1} \in \mathbb{R}^{\ell}$ and
one can define $D^{2} \mathbf{u}(v_{1},v_{2})$ as 
\begin{equation}
D^{2} \mathbf{u}(v_{1},v_{2})
:= D \big( D (\mathbf{u} )v_{1} \big) v_{2}, \quad \forall v_{1}, v_{2} \in \mathbb{R}^{d}
\end{equation}
In the same manner, one can define
\begin{equation}
D^{3} \mathbf{u}(v_{1},v_{2},v_{3})
:= D\Big( D \big( D (\mathbf{u} )v_{1} \big) v_{2} \Big)v_{3}, \quad \forall v_{1}, v_{2}, v_{3} \in \mathbb{R}^{d}
\end{equation}
and for any integer $k \geq 3$ the $k$-th order partial derivatives of the function  $\textbf{u}$ can be defined recursively.
Given the Banach spaces $\mathbb{X}$ and $\mathbb{Y}$, we denote by $L(\mathbb{X}, \mathbb{Y})$ the Banach space of bounded linear operators from  $\mathbb{X}$ into $\mathbb{Y}$.
Then the partial derivatives of the function  $\textbf{u}$ can be also regarded as the operators
\begin{equation}
D \textbf{u}(\cdot)(\cdot): \mathbb{R}^d \to L(\mathbb{R}^d, \mathbb{R}^\ell),    
\end{equation}
\begin{equation}
D^2 \textbf{u}(\cdot)(\cdot,\cdot): \mathbb{R}^d \to L(\mathbb{R}^d, L(\mathbb{R}^d,\mathbb{R}^d))\cong  L(\mathbb{R}^d\otimes\mathbb{R}^d, \mathbb{R}^\ell)
\end{equation}
and
\begin{equation}
D^3 \textbf{u}(\cdot)(\cdot,\cdot,\cdot): \mathbb{R}^d \to L(\mathbb{R}^d, L(\mathbb{R}^d,L(\mathbb{R}^d,\mathbb{R}^d)))\cong  L((\mathbb{R}^d)^{\otimes 3}, \mathbb{R}^\ell).
\end{equation}
We remark that the partial derivatives of the scalar valued function can be covered by the special case $\ell =1$.
For any $k\in \mathbb{N}$, let $C^{k}_{b}(\mathbb{R}^{d})$ be the subspace of $C_{b}(\mathbb{R}^{d})$ consisting of all functions with bounded partial derivatives $D^{i}\phi(x)$, $1\leq i \leq k$, and with the norm $\|\phi \|_{k}:=\|\phi \|_{0}+ \sum_{i=1}^{k} \sup_{x\in \mathbb{R}^{d}} \|D^{i}\phi(x) \|$. Further, let $\textbf{1}_{\mathbb{B}}$ be the indicative function of a set $\mathbb{B}$.
Denote $\frac{1}{0}:=\infty$.
To close this part, we let both $C$ and $C_{A}$ be the generic constant which are dependent of $T$ and the stepsize, but more specially, the notation $C_{A}$ further depends on the matrix $A$.

We present the following assumptions required to establish our main result.
\begin{assumption} \label{assumption:one-side-Lipschitz-condition-for-linear-operator}
Assume the matrix $A \in \mathbb{R}^{d\times d}$ is self-adjoint and negative definite. 
\end{assumption}
Assumption \ref{assumption:one-side-Lipschitz-condition-for-linear-operator} immediately implies that there exists a sequence of non-decreasing positive real numbers $\{ \lambda_{i} \}_{i=1}^{d}$ with
$0< \lambda_{1} \leq \lambda_{2} \leq \dots \leq \lambda_{d} < \infty$
and  an orthonormal basis $\{e_{i} \}_{i \in \{1,\dots,d\}}$ such that $Ae_{i}=\lambda_{i}e_{i}, i \in \{1,\dots,d\}$.
Moreover, one also obtains
\begin{equation} \label{equation:one-side-Lipschitz-conditoin-for-linear-operator}
\langle x-y , A(x-y) \rangle \leq -\lambda_{1}  \left\| x - y \right\| ^{2}, \quad \forall x, y \in \mathbb{R}^{d}.
\end{equation}
Setting $y=0$ leads to
\begin{equation} \label{equation:growth-conditoin-for-linear-operator}
\langle x , Ax \rangle \leq - \lambda_{1}   \left\| x \right\| ^{2}, \quad \forall x \in \mathbb{R}^{d}.
\end{equation}
\begin{assumption} \label{assumption:coercivity-condition-for-drift-and-diffusion}
(Coercivity condition) For some $p_{0} \in [1, \infty)$, there exists a constant $L_{1} \in \mathbb{R}$  such that,
\begin{equation}
\begin{aligned}
2 \langle x  , f ( x )  \rangle + (2p_{0} -1 ) \|g(x) \|^{2} \leq  L_{1} \left( 1 + \left\| x  \right\|^{2} \right) , \quad \forall x  \in \mathbb{R}^{d}.  \\
\end{aligned}
\end{equation}
\end{assumption}


\begin{assumption} \label{assumption:coupled-monotoncity-for-drift-and-diffusion}
(Coupled monotoncity condition)
For some $p_{1} \in (1, \infty)$, there exists a constant $L_{2}\in \mathbb{R}$  such that,
\begin{equation}
\begin{aligned}
2 \langle x - y , f ( x ) - f ( y ) \rangle +  (2p_{1} - 1)\left\| g ( x ) - g ( y ) \right\|^{2} \leq  L_{2}  \left\| x - y \right\| ^{2}, \quad \forall x, y \in \mathbb{R}^{d}.  
\end{aligned}
\end{equation}
\end{assumption}
Note that Assumption \ref{assumption:coupled-monotoncity-for-drift-and-diffusion} is equivalent to the following expression
\begin{equation}
\begin{aligned}
2 \langle x - y , f ( x ) - f ( y ) \rangle 
+  (2p_{1} - 1) \sum^{m}_{j=1}
\left\| g_{j} ( x ) - g_{j} ( y ) \right\|^{2} 
\leq  L_{2}  \left\| x - y \right\| ^{2}, 
\quad \forall x, y \in \mathbb{R}^{d}.  
\end{aligned}
\end{equation}
Thanks to Assumptions \ref{assumption:one-side-Lipschitz-condition-for-linear-operator}-\ref{assumption:coupled-monotoncity-for-drift-and-diffusion},
one obtains that
SDE \eqref{eq:semi-linear-SODE} possesses a unique solution with continuous sample paths. Moreover, we require that the coefficients $f$ and $g$ have continuous partial derivatives up to the third order. The corresponding assumption is presented as below.
\begin{assumption} \label{assumption:growth-condition-of-frechet-derivatives-of-drift-and-diffusion}
(Polynomial growth of drift and diffusion)
Assume that $f: \mathbb{R}^{d} \rightarrow \mathbb{R}^{d}$ and $g_{j}: \mathbb{R}^{d} \rightarrow \mathbb{R}^{d}$, $j\in \{1, \dots, m \}$, have all continuous derivatives up to order 3.
Then there exist some positive constant $\gamma \in [1,\infty)$ such that
\begin{equation}
\begin{aligned}
\left\|D^{3}f(x)( v_{1}, v_{2}, v_{3}) \right\| 
&\leq C  (1+\|x\|)^{\gamma-3} 
\|v_{1} \| \cdot \|v_{2} \| \cdot \|v_{3} \|,
\quad \forall x, v_{1}, v_{2}, v_{3}\in \mathbb{R}^{d}, \\  
\left\|D^{3}g_{j}(x) ( v_{1}, v_{2}, v_{3}) \right\|^{2}
&\leq C(1+\|x\|)^{\gamma-5} 
\|v_{1} \|^{2} \cdot \|v_{2} \|^{2} \cdot \|v_{3} \|^{2}, 
\quad \forall x, v_{1}, v_{2}, v_{3}\in \mathbb{R}^{d}.
\end{aligned}
\end{equation}
\end{assumption}
%
%
Assumption \ref{assumption:growth-condition-of-frechet-derivatives-of-drift-and-diffusion} is regarded  as a kind of polynomial growth conditions and
in proofs which follow we will need some implications of this assumption.
It follows immediately
that,
\begin{equation}
\begin{aligned}
\left\|
D^{2}f(x)  ( v_{1}, v_{2}) 
- D^{2}f(\tilde{x}) ( v_{1}, v_{2} ) 
\right\| 
&\leq 
C(1+\|x\|+\|\tilde{x}\|)^{\gamma-3} 
\|x-\tilde{x}\| 
\cdot \|v_{1} \| \cdot \|v_{2} \|, \
\forall x, \tilde{x}, v_{1}, v_{2}\in \mathbb{R}^{d},\\
\end{aligned}
\end{equation}
and
\begin{equation}
\left\|
D^{2}f(x) ( v_{1}, v_{2}) 
\right\| 
\leq C(1+\|x\|)^{\gamma-2}  
\|v_{1} \| \cdot \|v_{2} \|,
\quad \forall x, v_{1}, v_{2}\in \mathbb{R}^{d},
\end{equation}
which in turns gives
\begin{equation}
\begin{aligned}
\left\|
Df(x)  v_{1} - Df(\tilde{x}) v_{1}  
\right\| 
&\leq 
C(1+\|x\|+\|\tilde{x}\|)^{\gamma-2}  
\|x-\tilde{x}\| \cdot \|v_{1} \|, 
\quad \forall x, \tilde{x}, v_{1}\in \mathbb{R}^{d}, \\
\left\|Df(x)  v_{1}  \right\| 
&\leq C(1+\|x\|)^{\gamma-1} 
\|v_{1} \| ,
\quad \forall x, v_{1}\in \mathbb{R}^{d},
\end{aligned}
\end{equation}
and 
\begin{equation}\label{equation:growth-of-the-drift-f}
\begin{aligned}
\left\|f(x)  - f(\tilde{x}) \right\| 
&\leq C_{1}(1+\|x\|+\|\tilde{x}\|)^{\gamma-1} 
\|x-\tilde{x}\| , 
\quad \forall x, \tilde{x}\in \mathbb{R}^{d}, \\
\left\|f(x)   \right\| 
&\leq C_{2}(1+\|x\|)^{\gamma} ,
\quad \forall x\in \mathbb{R}^{d}.
\end{aligned}
\end{equation}
Following the same idea, Assumption \ref{assumption:growth-condition-of-frechet-derivatives-of-drift-and-diffusion} also ensures, for $j\in \{1, \dots,m \}$,
\begin{equation}
\left\|
D^{2}g_{j}(x) ( v_{1}, v_{2})
- D^{2}g_{j}(\tilde{x})( v_{1}, v_{2}) 
\right\|^{2} 
\leq C(1+\|x\|+\|\tilde{x}\|)^{\gamma-5} 
\|x-\tilde{x}\|^{2} 
\cdot \|v_{1} \|^{2} \cdot \|v_{2} \|^{2},\
\forall x, \tilde{x}, v_{1}, v_{2}\in \mathbb{R}^{d},\\
\end{equation}
and
\begin{equation}
\left\|
D^{2}g_{j}(x) ( v_{1}, v_{2}) 
\right\|^{2} 
\leq 
C(1+\|x\|)^{\gamma-3}  
\|v_{1} \|^{2} \cdot \|v_{2} \|^{2},
\quad \forall x, v_{1}, v_{2}\in \mathbb{R}^{d}.
\end{equation}
This in turns gives, for $j\in \{1,\dots,m \}$,
\begin{equation}
\begin{aligned}
\left\|
Dg_{j}(x)  v_{1} - Dg_{j}(\tilde{x}) v_{1}  
\right\|^{2} 
&\leq C(1+\|x\|+\|\tilde{x}\|)^{\gamma-3} 
\|x-\tilde{x}\|^{2} \cdot
\|v_{1} \|^{2}, 
\quad \forall x, \tilde{x}, v_{1}\in \mathbb{R}^{d}, \\
\left\|
Dg_{j}(x)  v_{1}  
\right\|^{2} 
&\leq C(1+\|x\|)^{\gamma-1}  
\|v_{1} \|^{2} ,
\quad \forall x, v_{1}\in \mathbb{R}^{d},
\end{aligned}
\end{equation}
and 
\begin{equation}
\begin{aligned}
\left\|
g_{j}(x)  - g_{j}(\tilde{x}) 
\right\|^{2} 
&\leq C(1+\|x\|+\|\tilde{x}\|)^{\gamma-1} 
\|x-\tilde{x}\|^{2}, 
\quad \forall x, \tilde{x}\in \mathbb{R}^{d}, \\
\left\|g_{j}(x)   \right\|^{2} 
&\leq 
C(1+\|x\|)^{\gamma+1} 
,\quad \forall x\in \mathbb{R}^{d}.
\end{aligned}
\end{equation}
We remark that Assumptions \ref{assumption:one-side-Lipschitz-condition-for-linear-operator}-\ref{assumption:growth-condition-of-frechet-derivatives-of-drift-and-diffusion} enable us to cover a broad class of SDEs with non-globally Lipschitz coefficients, which do not have closed-form solutions in general.

Now we are fully prepared to state the main result of this article as follows,

\begin{theorem} (Main result)\label{thm:main}
Let Assumptions \ref{assumption:one-side-Lipschitz-condition-for-linear-operator}-\ref{assumption:growth-condition-of-frechet-derivatives-of-drift-and-diffusion} hold with $p_{0} \geq \max\{4\gamma+1, 5\gamma-4\}$ and $2\lambda_{1} > \max\{L_{1}, L_{2}\}$ and consider SDE \eqref{eq:semi-linear-SODE}.
Given $p \in [1, p_{0}) \cap \mathbb{N}$ and method parameter $\theta \in [0,1]$,
let $h$ be the uniform timestep satisfying
\begin{equation}
h \in 
\left(
0, \min\left\{
\tfrac{1}{2(1-\theta)\lambda_{1}},
\tfrac{p_{0}-p}{(1-\theta)(2p_{0}-p-1)\lambda_{1}},
\tfrac{1}{(1-\theta)\lambda_{d}},
\tfrac{\kappa^{2}(2\lambda_{1}-L_{2})}
{(1- \theta)^{2}\lambda^{2}_{d}},
\tfrac{(1-\kappa)^{2\gamma}(2\lambda_{1}-L_{2})^{\gamma}}
{(\lambda_{f})^{2\gamma}}, 1
\right\} 
\right), 
\quad  \kappa\in (0,1),
\end{equation}
where $\lambda_{f}:=C_{1}(1+2h^{\frac{1}{2}})$, $C_{1}$ is a constant depending only on the drift $f$, determined in \eqref{equation:growth-of-the-drift-f}.
Then the SDE \eqref{eq:semi-linear-SODE} and the corresponding LTPE scheme \eqref{introduction:LTPE-scheme} method converge exponentially to a unique invariant measure, denoted by $\pi$ and $\widetilde{\pi}$, respectively. Moreover,
for some test functions $\varphi \in C^{3}_{b}(\mathbb{R}^{d})$,
\begin{equation}
\left|
\int_{\mathbb{R}^d} 
\varphi(x) \pi(\dd x)
-
\int_{\mathbb{R}^d} 
\varphi(x) \widetilde{\pi}( \dd x)
\right| \leq C_{A}h.
\end{equation}
\end{theorem}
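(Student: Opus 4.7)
The plan is to combine the exponential ergodicity of the SDE and of the LTPE iterate (already established in the invariant measure theorems cited above) with a finite-horizon weak error bound that is uniform in the terminal time, and then let the horizon tend to infinity. Setting $T=Nh$ and inserting intermediate expectations,
\[
|\pi(\varphi) - \widetilde\pi(\varphi)| \le |\pi(\varphi) - \E[\varphi(X_T^{x_0})]| + |\E[\varphi(X_T^{x_0})] - \E[\varphi(Y_N^{x_0})]| + |\E[\varphi(Y_N^{x_0})] - \widetilde\pi(\varphi)|,
\]
the two outer terms vanish as $T\to\infty$ by the exponential ergodicity of $X_t^{x_0}$ and $Y_n^{x_0}$, so it suffices to show $|\E[\varphi(X_T^{x_0})] - \E[\varphi(Y_N^{x_0})]| \le C_A h$ uniformly in $T$.

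For this finite-horizon estimate I would follow the Kolmogorov equation approach sketched in the introduction. Let $u(t,x):=\E[\varphi(X_t^x)]$ and split the error into $\text{Error}_1+\text{Error}_2+\text{Error}_3$ via the intermediate variable $Z_n:=Y_n - \theta A Y_n h$. The terms $\text{Error}_1$ and $\text{Error}_2$ just replace $Y_N$ by $Z_N$ and $Z_0$ by $x_0$; by the strong projection estimate \eqref{introduction:error-estimate-of-the-projection} together with the bounded gradient of $u(0,\cdot)=\varphi$ (respectively the time-independent bound on $Du(T,\cdot)$) and the uniform moment bounds on $Y_N$, both are $O(h^2)$. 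For $\text{Error}_3$ I use the decomposition \eqref{introduction:decomposition-of-error-3} based on the continuous interpolation $\mathbb{Z}^n$. The first sum is controlled by a Taylor expansion in the spatial argument, producing $O(h)$ per step that the exponential decay of $Du(T-t_n,\cdot)$ collapses into a uniform $O(h)$ over the $N=T/h$ summands. The second sum is the heart of the estimate: applying It\^o's formula to $u(T-t,\mathbb{Z}^n(t))$ on $[t_n,t_{n+1}]$ and using the Kolmogorov equation evaluated at $\mathscr{P}(Y_n)$ cancels the leading contributions, leaving a remainder whose $\E$-expectation is controlled by the differences $F(\mathscr{P}(Y_n)) - F(\mathbb{Z}^n(t))$ paired with $Du$ and $g(\mathscr{P}(Y_n)) - g(\mathbb{Z}^n(t))$ paired with $D^2 u$. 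Combining the polynomial-growth estimates following Assumption \ref{assumption:growth-condition-of-frechet-derivatives-of-drift-and-diffusion} with the one-step size estimate $\|\mathbb{Z}^n(t) - \mathscr{P}(Y_n)\| = O(h^{1/2})$ in $L^r$ and the uniform moment bounds of Lemma \ref{lemma:uniform-moments-bound-of-the-LTPE-method}, each step contributes $O(h^2)$, and summing against the exponentially decaying $\|D^i u(T-t_n,\cdot)\|$ yields a total of $O(h)$ independent of $T$.

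The main obstacle is securing the time-independent regularity estimates for $u$: for the telescope to collapse, the bounds on $\|D^i u(t,\cdot)\|$ for $i=1,2,3$ must decay exponentially in $t$. This requires the strict dissipativity $2\lambda_1 > \max\{L_1,L_2\}$ to propagate contraction from $X_t^x$ to its first, second and third variational processes, implemented through repeated It\^o/variational arguments and feeding the proof via Lemma \ref{lemma:estimate-of-u-and-its-derivatives} and Corollary \ref{lemma:contractivity-of-u}. A second delicate step is the time-uniform moment control of $Y_n$ despite the discontinuous projection $\mathscr{P}$ and the stiff linear solve $(I-\theta A h)^{-1}$; this is handled by a binomial-expansion discretisation strategy exploiting the spectral decomposition of $A$, and the precise stepsize restrictions in the statement of the theorem are what make this strategy close.
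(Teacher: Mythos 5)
Your overall architecture coincides with the paper's: establish exponential ergodicity of the SDE (Theorem \ref{theorem:invariant-measure-of-sode}) and of the LTPE iterates (Theorem \ref{theorem:invariant-measure-of-LTPE-method}), prove a weak error bound $|\E[\varphi(X_T^{x_0})]-\E[\varphi(Y_N^{x_0})]|\le C_A h$ that is uniform in $T$ via the Kolmogorov equation, the shift $Z_n=Y_n-\theta A Y_n h$, the interpolation $\mathbb{Z}^n$, It\^o's formula and the time-independent bounds on $D^iu$, $i=1,2,3$ (Lemmas \ref{lemma:differentiability-of-solutions}--\ref{lemma:estimate-of-u-and-its-derivatives}); the only structural difference is that you pass to the invariant measures by a triangle inequality and $T\to\infty$, while the paper averages \`a la Ces\`aro in \eqref{equation:estimate-decomposition-of-two-measures} — both combinations are fine and use the same ingredients.

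However, your bookkeeping for $\mathrm{Error}_3$ has a genuine gap in the first sum of \eqref{introduction:decomposition-of-error-3}. You claim ``$O(h)$ per step'' which ``the exponential decay of $Du(T-t_n,\cdot)$ collapses into a uniform $O(h)$''; this does not add up, since $\sum_{n=0}^{N-1}e^{-\alpha_1(T-t_n)}$ is of order $1/(\alpha_1 h)$, so an $O(h)$ per-step contribution only yields a total of $O(1)$. What makes this term $O(h)$ in the paper is that $\|Z_n-\mathbb{Z}^n(t_n)\|_{L^2}=\|(I-\theta Ah)(Y_n-\mathscr{P}(Y_n))\|_{L^2}$ is of size $h^{2}$ (up to moments), via the sharper projection estimate of Lemma \ref{lemma:error-estimate-between-x-and-projected-x}, which in turn needs $Y_n\in L^{8\gamma+2}(\Omega,\mathbb{R}^d)$ uniformly in $n$ — this is exactly where the hypothesis $p_0\ge\max\{4\gamma+1,5\gamma-4\}$ and the $2p$-th moment bound \eqref{equation:2p-thmoment-estimate-of-LTPE-method} enter; your proposal never invokes this second-order projection bound, so as written the first sum is not closed. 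A smaller (harmless) slip: $\mathrm{Error}_1$ and $\mathrm{Error}_2$ have nothing to do with the projection — the difference $Y_N-Z_N=\theta A Y_N h$ (resp.\ $x_0-Z_0=\theta A x_0 h$) is an $O(h)$ shift, and the resulting bound is $O(h)$, not $O(h^2)$; this still suffices for the stated order-one result.
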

This theorem can be divided into three parts as
\begin{itemize}
\item Existence and  uniqueness  of invariant measure of SDE \eqref{eq:semi-linear-SODE}.
\item Existence and  uniqueness of invariant measure of the LTPE scheme \eqref{introduction:LTPE-scheme}.
\item Time-independent weak error analysis between SDE \eqref{eq:semi-linear-SODE} and the LTPE scheme \eqref{introduction:LTPE-scheme}.
\end{itemize}
In the following, more details of each part will be shown.
%
%
\section{Invariant measure of semi-linear SDE} \label{subsection: Invariant measure of semi-linear SDE}
Indeed, we show the following result.
\begin{theorem} \label{theorem:invariant-measure-of-sode}
Let Assumptions \ref{assumption:one-side-Lipschitz-condition-for-linear-operator}-\ref{assumption:coupled-monotoncity-for-drift-and-diffusion} be fulfilled with $2\lambda_{1} > \max\{L_{1},L_{2}\}$, given $\varphi \in C_{b}^{1}(\mathbb{R}^{d})$,
then the semi-linear SDE $\{ X^{x_{0}}_{t}\}_{t\in [0,T]}$  in \eqref{eq:semi-linear-SODE}, with the initial condition  $X_{0}=x_{0}$, admits a unique invariant measure $\pi$ and there exists some positive constant 
$c_{1}\in (0, 2\lambda_{1}-L_{2}]$
such that
\begin{equation}
\left|
\mathbb{E} \left[ \varphi\left(X^{x_{0}}_{t}\right) \right]
-\int_{\mathbb{R}^d} 
\varphi(x) \pi(d x)
\right| 
\leq C e^{-\frac{c_{1}}{2} t}
\left(
1+\mathbb{E}\left[\|x_{0}\|^2\right]
\right), 
\quad \forall t \in [0,T].
\end{equation}
\end{theorem}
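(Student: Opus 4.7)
The plan is to follow the classical Has'minskii/Da Prato-Zabczyk route: derive an $L^{2}$ contraction between two solutions started at different initial data, show a uniform-in-time second moment bound, and combine the two via Krylov-Bogolyubov. Throughout I write $X_{t}^{x}$ for the solution with deterministic initial datum $x\in\mathbb{R}^{d}$ and recover the random initial case at the end by conditioning on $\mathcal{F}_{0}$.

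The core estimate is obtained by applying It\^o's formula to $\|X_{t}^{x}-X_{t}^{y}\|^{2}$. Assumption \ref{assumption:one-side-Lipschitz-condition-for-linear-operator} gives $\langle X_{t}^{x}-X_{t}^{y},A(X_{t}^{x}-X_{t}^{y})\rangle\leq-\lambda_{1}\|X_{t}^{x}-X_{t}^{y}\|^{2}$, while Assumption \ref{assumption:coupled-monotoncity-for-drift-and-diffusion} (together with $2p_{1}-1>1$) yields $2\langle x-y,f(x)-f(y)\rangle+\|g(x)-g(y)\|^{2}\leq L_{2}\|x-y\|^{2}$. After a standard localisation to kill the stochastic integral, Gr\"onwall delivers
\begin{equation*}
\mathbb{E}\|X_{t}^{x}-X_{t}^{y}\|^{2}\leq e^{-c_{1}t}\|x-y\|^{2},\qquad c_{1}:=2\lambda_{1}-L_{2}>0.
\end{equation*}
A parallel It\^o computation for $\|X_{t}^{x}\|^{2}$ using Assumption \ref{assumption:coercivity-condition-for-drift-and-diffusion} (which implies $2\langle x,f(x)\rangle+\|g(x)\|^{2}\leq L_{1}(1+\|x\|^{2})$ since $p_{0}\geq 1$) produces the uniform bound $\sup_{t\geq 0}\mathbb{E}\|X_{t}^{x}\|^{2}\leq \|x\|^{2}+L_{1}/(2\lambda_{1}-L_{1})$.

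For the existence of an invariant measure $\pi$ I would appeal to Krylov-Bogolyubov: the uniform second moment makes $\{\mathrm{Law}(X_{t}^{x})\}_{t\geq 0}$ tight on $\mathbb{R}^{d}$ via Markov's inequality, and the $L^{2}$ contraction together with dominated convergence ensures that $P_{t}\colon C_{b}(\mathbb{R}^{d})\to C_{b}(\mathbb{R}^{d})$ is Feller, so every Ces\`aro limit point is invariant. Uniqueness is immediate: if $\pi,\pi'$ are both invariant, then for $\varphi\in C_{b}^{1}(\mathbb{R}^{d})$,
\begin{equation*}
|\pi(\varphi)-\pi'(\varphi)|=\left|\iint\bigl(P_{t}\varphi(x)-P_{t}\varphi(y)\bigr)\pi(\dd x)\pi'(\dd y)\right|\leq \|\varphi\|_{1}e^{-c_{1}t/2}\iint\|x-y\|\,\pi(\dd x)\pi'(\dd y)\to 0,
\end{equation*}
where the double integral is finite because $\pi$ and $\pi'$ inherit the second moment bound via Fatou in the Krylov-Bogolyubov limit.

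Finally, for the quantitative rate I use invariance to write $\pi(\varphi)=\int P_{t}\varphi(y)\,\pi(\dd y)$ and combine $|\varphi(x)-\varphi(y)|\leq\|\varphi\|_{1}\|x-y\|$ with the contraction to obtain
\begin{equation*}
|P_{t}\varphi(x)-\pi(\varphi)|\leq\|\varphi\|_{1}e^{-c_{1}t/2}\Bigl(\|x\|+\int\|y\|\,\pi(\dd y)\Bigr).
\end{equation*}
Conditioning on $\mathcal{F}_{0}$ and absorbing $\mathbb{E}\|x_{0}\|$ into $1+\mathbb{E}\|x_{0}\|^{2}$ via Cauchy-Schwarz delivers the stated estimate. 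The step I expect to require the most care is the Krylov-Bogolyubov construction together with the Fatou transfer of the uniform second moment bound to $\pi$, since this is precisely what keeps the final constant finite and independent of $t$; by contrast the two It\^o computations are routine once the dissipativity inequalities from Assumptions \ref{assumption:one-side-Lipschitz-condition-for-linear-operator}--\ref{assumption:coupled-monotoncity-for-drift-and-diffusion} are in place.
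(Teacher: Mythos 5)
Your proposal is correct in substance but follows a genuinely different route from the paper. The paper does not invoke Krylov--Bogolyubov for the SDE at all: it works with the equation started at a negative time $-\iota$ as in \eqref{eq:SODE-with-negative-time}, proves the uniform moment bound (Lemma \ref{lemma:uniform-moments-bound-of-SDEs}), the synchronous-coupling contraction (Lemma \ref{lemma:contractivity-of-sde}) and a Cauchy-type estimate comparing two starting times (Lemma \ref{lemma:cauchy-sequence-of-sode}), and then defines $\vartheta:=\lim_{s\to\infty}X_0^{-s,x_0}$ as an $L^2(\Omega,\mathbb{R}^d)$ limit; the invariant measure $\pi$ is the law of $\vartheta$, its independence of $x_0$ (hence uniqueness) follows from the contraction, and the exponential rate drops out immediately from the equality in law of $X_t^{x_0}$ and $X_0^{-t,x_0}$ together with $\mathbb{E}\|X_0^{-t,x_0}-\vartheta\|\le Ce^{-c_2t/2}\bigl(1+\mathbb{E}\|x_0\|^2\bigr)$ --- no moment information about $\pi$ is ever needed, because $\vartheta$ lies in $L^2$ by construction. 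Your route (tightness plus Feller for existence, contraction plus invariance for uniqueness and the rate) rests on the same two It\^o estimates and is standard, but it pays for the conclusion with exactly the bookkeeping you flag: transferring moments to the invariant measure. One small repair is needed there: the Fatou argument you sketch only provides moments for the particular Ces\`aro limit produced by Krylov--Bogolyubov, whereas your uniqueness computation requires $\int\|y\|\,\pi'(\dd y)<\infty$ for an \emph{arbitrary} invariant measure $\pi'$. This is easily fixed by truncation: by invariance,
\begin{equation*}
\int_{\mathbb{R}^d}\bigl(\|y\|^2\wedge R\bigr)\,\pi'(\dd y)
=\int_{\mathbb{R}^d}\mathbb{E}\bigl[\|X_t^{y}\|^2\wedge R\bigr]\,\pi'(\dd y)
\le\int_{\mathbb{R}^d}\Bigl(\bigl(e^{-(2\lambda_1-L_1)t}\|y\|^2+C\bigr)\wedge R\Bigr)\,\pi'(\dd y),
\end{equation*}
and letting $t\to\infty$ (dominated convergence, the integrand being bounded by $R$) and then $R\to\infty$ (monotone convergence) bounds the second moment of every invariant measure uniformly. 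With that patch your argument is complete and delivers the same rate, with $c_1=2\lambda_1-L_2$, as the paper's remote-start construction; what the paper's approach buys is that existence, uniqueness and the quantitative rate all come from the single Cauchy estimate of Lemma \ref{lemma:cauchy-sequence-of-sode}, while yours buys a more modular argument that reuses the Krylov--Bogolyubov machinery the paper only deploys later for the numerical scheme.
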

With the condition $2\lambda_{1} > \max\{L_{1}, L_{2}\}$, SDE \eqref{eq:semi-linear-SODE} can be regarded as a dissipative system. 
We follow the standard way, as shown in \cite{da2006introduction}, to prove the existence and uniqueness of the invariant measure inherited by such systems.
For completeness, we outline the central idea in the proof of Theorem \ref{theorem:invariant-measure-of-sode} while the detailed proof of the following lemmas can be found in Appendix.

It is desirable to consider SDE \eqref{eq:semi-linear-SODE} with a negative initial time, that is,
\begin{equation} \label{eq:SODE-with-negative-time}
\begin{split}
\left\{
    \begin{array}{ll}
    \dd X_t  = A X_t + f ( X_t ) \, \dd t + g ( X_t ) \,\dd \widetilde{W}_t,
    \quad
    t \geq -\iota,
    \\
   X_{-\iota} = x_0,
 \end{array}\right.
 \end{split}
\end{equation}
where $\iota\geq0$, $\widetilde{W}_t$ is specified in the following way.
Let $\overline{W}_{t}$ be another Brownian motion independent of $W_{t}$ defined on the probability space $(\Omega,\mathcal{F}, \mathbb{P})$, and define
\begin{equation}
\widetilde{W}_t
=
\left\{\begin{array}{ll}
W_{t}, & t \geq 0 \\
\overline{W}_{t}, & t<0
\end{array}\right.
\end{equation}
with the filtration $\widetilde{\mathcal{F}}_{t} := \sigma \{\widetilde{W}_{s}, s\leq t \}$, $t \in \mathbb{R}$.
In what follows, we write $X^{s,x}_{t}$ in lieu  of $X_{t}$ to highlight the initial value $X_{s}=x$.

Before moving on, we introduce a useful lemma, which is a slight generalization of Lemma 8.1 in \cite{ito1967stationary}, as below,

\begin{lemma} \label{lemma:ito}
If $r(t)$ and $m(t)$ are continuous on $[\tau, \infty)$, $\tau \in \mathbb{R}$, and if 
\begin{equation}
r(t) 
\leq r(s) - \widetilde{c} \int_{s}^{t} r(u) \dd u 
+ \int_{s}^{t} m(u) \dd u , 
\quad \tau \leq s \leq t < \infty
\end{equation}
where $\widetilde{c}$ is a positive constant, then
\begin{equation}
r(t) 
\leq 
r(\tau) 
+  \int_{\tau}^{t} e^{-\widetilde{c}(t-u)} m(u)  \dd u .
\end{equation}
\end{lemma}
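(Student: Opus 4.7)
The plan is to convert the integral inequality into a first-order linear differential inequality, solve it with the usual integrating-factor trick, and then re-substitute to bound $r(t)$ itself. First I would specialise the hypothesis to $s=\tau$, obtaining
\begin{equation*}
r(t) + \widetilde{c}\int_{\tau}^{t} r(u)\,\mathrm{d}u \leq r(\tau) + \int_{\tau}^{t} m(u)\,\mathrm{d}u, \qquad t \geq \tau.
\end{equation*}
Because $r$ is continuous, the auxiliary function $\phi(t):=\int_{\tau}^{t} r(u)\,\mathrm{d}u$ is continuously differentiable with $\phi'(t)=r(t)$ and $\phi(\tau)=0$. The above inequality then reads $\phi'(t) + \widetilde{c}\,\phi(t) \leq r(\tau) + \int_{\tau}^{t} m(u)\,\mathrm{d}u$, which is a genuine linear differential inequality for $\phi$.

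Next I would multiply through by the integrating factor $e^{\widetilde{c}t}$, turning the left-hand side into $\bigl(e^{\widetilde{c}t}\phi(t)\bigr)'$, and integrate from $\tau$ to $t$. Using $\phi(\tau)=0$ and Fubini's theorem to swap the iterated integral of $m$ yields
\begin{equation*}
\widetilde{c}\,\phi(t) \leq r(\tau)\bigl(1-e^{-\widetilde{c}(t-\tau)}\bigr) + \int_{\tau}^{t}\bigl(1-e^{-\widetilde{c}(t-u)}\bigr) m(u)\,\mathrm{d}u.
\end{equation*}
Feeding this back into the specialised hypothesis rewritten as $r(t) \leq r(\tau) - \widetilde{c}\,\phi(t) + \int_{\tau}^{t} m(u)\,\mathrm{d}u$ produces, after cancellations, the sharper bound $r(t) \leq r(\tau) e^{-\widetilde{c}(t-\tau)} + \int_{\tau}^{t} e^{-\widetilde{c}(t-u)} m(u)\,\mathrm{d}u$, which immediately implies the claimed estimate (the factor $e^{-\widetilde{c}(t-\tau)}\leq 1$ may be dropped if $r(\tau)\geq 0$, or simply kept as a strictly stronger statement).

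I do not expect a real obstacle here: the lemma is a standard variation-of-constants/Gronwall comparison, and the only thing to keep an eye on is that $r$ is merely continuous and not differentiable, which is why I introduce $\phi$ rather than differentiate the hypothesis directly. If one wanted to avoid even that amount of calculus, an equivalent route is to set $\rho(t):=r(\tau)e^{-\widetilde{c}(t-\tau)} + \int_{\tau}^{t} e^{-\widetilde{c}(t-u)} m(u)\,\mathrm{d}u$, verify by a direct computation that $\rho$ satisfies the associated integral equation with equality, and then show that the continuous function $t\mapsto e^{\widetilde{c}t}\bigl(r(t)-\rho(t)\bigr) + \widetilde{c}\int_{\tau}^{t} e^{\widetilde{c}u}(r(u)-\rho(u))\,\mathrm{d}u$ is non-increasing, so that comparison of $r$ with $\rho$ follows. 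Either approach is essentially bookkeeping with one exponential integrating factor.
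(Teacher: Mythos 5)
Your argument breaks down at the feed-back step. You correctly derive the upper bound $\widetilde{c}\,\phi(t)\le r(\tau)\bigl(1-e^{-\widetilde{c}(t-\tau)}\bigr)+\int_{\tau}^{t}\bigl(1-e^{-\widetilde{c}(t-u)}\bigr)m(u)\,\dd u$, but in the specialised hypothesis $r(t)\le r(\tau)-\widetilde{c}\,\phi(t)+\int_{\tau}^{t}m(u)\,\dd u$ the quantity $\widetilde{c}\,\phi(t)$ enters with a negative sign, so substituting an \emph{upper} bound for it makes the right-hand side smaller, not larger; you would need a lower bound on $\phi(t)$, which you do not have. This is not a repairable slip in bookkeeping: the conclusion is actually false if one only assumes the inequality for $s=\tau$, which is all your argument uses. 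Take $\tau=0$, $\widetilde{c}=1$, $m\equiv 0$, and a continuous $r$ with $r(0)=1$ that drops rapidly to $-10$, stays at $-10$ until $t=1$, rises linearly to $r(2)=5$, and decays thereafter; one checks that $r(t)+\int_{0}^{t}r(u)\,\dd u\le 1$ for all $t\ge 0$, yet $r(2)=5$ violates both the stated bound $r(t)\le r(0)$ and your sharper bound $r(0)e^{-t}$. The two-parameter hypothesis (for \emph{every} pair $\tau\le s\le t$) is what excludes such behaviour --- with $s=1$, $t=2$ it forces $r(2)\le r(1)-\int_{1}^{2}r(u)\,\dd u=-7.5$ --- so it must genuinely enter the proof, and your specialisation to $s=\tau$ discards it.

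A correct route close in spirit to yours: the hypothesis says exactly that $D(t):=r(t)+\widetilde{c}\int_{\tau}^{t}r(u)\,\dd u-\int_{\tau}^{t}m(u)\,\dd u$ is non-increasing, with $D(\tau)=r(\tau)$. Setting $V(t):=\int_{\tau}^{t}r(u)\,\dd u$, you have the identity $V'(t)+\widetilde{c}V(t)=D(t)+\int_{\tau}^{t}m(u)\,\dd u$, which the integrating factor solves exactly; then $r(t)=D(t)-\widetilde{c}V(t)+\int_{\tau}^{t}m(u)\,\dd u$, and using $D(v)\ge D(t)$ for $v\le t$, $D(t)\le r(\tau)$, together with the same Fubini computation you performed, gives $r(t)\le r(\tau)e^{-\widetilde{c}(t-\tau)}+\int_{\tau}^{t}e^{-\widetilde{c}(t-u)}m(u)\,\dd u$, hence the stated bound for the nonnegative $r$ to which the lemma is applied. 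Your fallback comparison argument suffers from the same defect: the function $e^{\widetilde{c}t}\bigl(r(t)-\rho(t)\bigr)+\widetilde{c}\int_{\tau}^{t}e^{\widetilde{c}u}\bigl(r(u)-\rho(u)\bigr)\,\dd u$ is not evidently non-increasing --- estimating its increments requires knowing the sign of $r-\rho$, which is the conclusion --- whereas the monotonicity of $D$ is immediate from the hypothesis. (Note the paper itself does not prove the lemma but cites the literature, so the comparison here is with a correct argument rather than with the paper's text.)
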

The proof of Lemma \ref{lemma:ito} has been shown in \cite{guo2023order}.
It is time to present the uniform moment bounds of the SDE \eqref{eq:SODE-with-negative-time}.
\begin{lemma} \label{lemma:uniform-moments-bound-of-SDEs}
(Uniform moment bounds of semi-linear SDEs.) Let the semi-linear SDEs $\{ X^{-\iota, x_{0}}_{t} \}_{t\geq -\iota}$ in \eqref{eq:SODE-with-negative-time} satisfy Assumptions \ref{assumption:one-side-Lipschitz-condition-for-linear-operator}, \ref{assumption:coercivity-condition-for-drift-and-diffusion} with $2\lambda_{1} > L_{1} $. Then, for any $p \in [1 , p_{0}]$ and $t \in [0, \infty)$,
\begin{equation}
\mathbb{E} 
\left[ 
\left\| X^{-\iota, x_{0}}_{t}  \right\|^{2p} 
\right] 
\leq C < \infty.
\end{equation}
\end{lemma}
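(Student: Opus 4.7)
The plan is to apply It\^o's formula to $\|X^{-\iota, x_0}_t\|^{2p}$ and derive a linear differential inequality of the form $\tfrac{d}{dt}\mathbb{E}[\|X_t\|^{2p}] \le -c\,\mathbb{E}[\|X_t\|^{2p}] + C$, then invoke Lemma \ref{lemma:ito} (with $\tau=-\iota$) to convert it into a uniform-in-$t$ bound. The coefficient $c>0$ will come out of the competition between the dissipativity constant $2p\lambda_1$ (from Assumption \ref{assumption:one-side-Lipschitz-condition-for-linear-operator}) and the constant $pL_1$ (from Assumption \ref{assumption:coercivity-condition-for-drift-and-diffusion}), and the hypothesis $2\lambda_1>L_1$ is precisely what makes $c$ positive.

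Concretely, I would first work with a regularized function $\phi_\varepsilon(x)=(\|x\|^2+\varepsilon)^p$ (to cover non-integer $p$ smoothly) and localize by a stopping time $\tau_R=\inf\{t:\|X_t\|\ge R\}$ so the stochastic integral is a true martingale with zero expectation. The It\^o expansion produces
\begin{equation*}
d\|X_t\|^{2p} = 2p\|X_t\|^{2p-2}\langle X_t, AX_t+f(X_t)\rangle\,dt + p\|X_t\|^{2p-2}\|g(X_t)\|^2\,dt + 2p(p-1)\|X_t\|^{2p-4}\|g(X_t)^T X_t\|^2\,dt + dM_t,
\end{equation*}
where $dM_t$ is the martingale differential. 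Using $\|g(X_t)^T X_t\|^2\le \|X_t\|^2\|g(X_t)\|^2$ collapses the two diffusion contributions into the single bound $p(2p-1)\|X_t\|^{2p-2}\|g(X_t)\|^2$. Combining with $\langle X_t,AX_t\rangle\le -\lambda_1\|X_t\|^2$ and then invoking Assumption \ref{assumption:coercivity-condition-for-drift-and-diffusion} (valid since $p\le p_0$, so that the extra non-positive term $(2p-2p_0)\|g\|^2$ can be discarded) gives
\begin{equation*}
\tfrac{d}{dt}\mathbb{E}[\|X_{t\wedge\tau_R}\|^{2p}] \le -p(2\lambda_1-L_1)\,\mathbb{E}[\|X_{t\wedge\tau_R}\|^{2p}] + pL_1\,\mathbb{E}[\|X_{t\wedge\tau_R}\|^{2p-2}].
\end{equation*}

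The final step is to absorb the $\|X_t\|^{2p-2}$ remainder via Young's inequality: choose $\varepsilon<p(2\lambda_1-L_1)$ so that $pL_1\|X_t\|^{2p-2}\le \varepsilon\|X_t\|^{2p}+C_\varepsilon$, leaving
\begin{equation*}
\tfrac{d}{dt}\mathbb{E}[\|X_{t\wedge\tau_R}\|^{2p}] \le -c\,\mathbb{E}[\|X_{t\wedge\tau_R}\|^{2p}] + C,\qquad c:=p(2\lambda_1-L_1)-\varepsilon>0.
\end{equation*}
Lemma \ref{lemma:ito} then yields $\mathbb{E}[\|X_{t\wedge\tau_R}\|^{2p}] \le \mathbb{E}[\|x_0\|^{2p}] + (C/c)(1-e^{-c(t+\iota)})$, which is uniform in $t$. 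Passing $R\to\infty$ by Fatou, and sending $\varepsilon\to 0$ in the initial regularization, produces the stated bound with a constant independent of both $t$ and $\iota$.

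The only non-routine point is keeping the moment bound uniform in $\iota$: because the right-hand side of Lemma \ref{lemma:ito} decays exponentially from $t=-\iota$, the contribution of the initial datum tends to zero and the supremum over $t\ge 0$ is controlled by $C/c$ independently of how far back in the past the initial condition was imposed. That uniformity is what will later enable the tightness argument for existence of an invariant measure; making sure the constants in the Young-inequality step and in the dissipativity constant $c$ do not depend on $\iota$ is the only subtlety I would need to check carefully.
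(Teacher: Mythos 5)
Your proposal is correct and follows essentially the same route as the paper: It\^o's formula applied to a $2p$-th moment functional, localization by a stopping time, the dissipativity \eqref{equation:growth-conditoin-for-linear-operator} combined with Assumption \ref{assumption:coercivity-condition-for-drift-and-diffusion} (using $p\le p_{0}$ to discard the surplus diffusion term), a Young-inequality absorption of the lower-order moment, and then Lemma \ref{lemma:ito} together with Fatou's lemma. The only cosmetic difference is that the paper works directly with $\left(1+\|x\|^{2}\right)^{p}$, which makes your $\varepsilon$-regularization of $\|x\|^{2p}$ unnecessary.
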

The proof of Lemma \ref{lemma:uniform-moments-bound-of-SDEs} can be found in Appendix \ref{proof-of-lemma:uniform-moments-bound-of-SDEs}.
Note that Lemma \ref{lemma:uniform-moments-bound-of-SDEs} can also cover the case $p\in [0,1)$ due to the H\"older inequality.
Following Lemma \ref{lemma:uniform-moments-bound-of-SDEs}, we obtain the contractive property of SDE \eqref{eq:semi-linear-SODE} as follows,
\begin{lemma} \label{lemma:contractivity-of-sde}
(Contractivity of semi-linear SDEs.) Consider the pair of solutions of the semi-linear SDE \eqref{eq:SODE-with-negative-time}, $X^{-\iota, x^{(1)}_{0}}_{t}$ and $X^{-\iota, x^{(2)}_{0}}_{t}$, driven by the same Brownian motion but with different initial state $x^{(1)}_{0}$, $x^{(2)}_{0}$.
Let Assumptions \ref{assumption:one-side-Lipschitz-condition-for-linear-operator}, \ref{assumption:coupled-monotoncity-for-drift-and-diffusion} 
hold with $2\lambda_{1}>L_{2}$, then, there exists a constant $c_{1} \in (0,2\lambda_{1} - L_{2}]$ 
such that, for any $p \in [1, p_{1}]$, $t \geq -\iota$,
\begin{equation}
\mathbb{E} 
\left[  
\Big\| X^{-\iota, x^{(1)}_{0}}_{t} 
- X^{-\iota, x^{(2)}_{0}}_{t} 
\Big\|^{2p}
\right] 
\leq e^{- c_{1}p (t+\iota)} 
\mathbb{E} 
\left[  
\left\| x_0^{(1)} - x_0^{(2)} \right\|^{2p}
\right].
\end{equation}
\end{lemma}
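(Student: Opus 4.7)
The plan is to track the difference process $D_t := X^{-\iota, x^{(1)}_{0}}_t - X^{-\iota, x^{(2)}_{0}}_t$, which, thanks to the two copies being driven by the same Brownian motion $\widetilde{W}$, satisfies the closed-form SDE
\begin{equation*}
\dd D_t = \bigl[A D_t + f(X^{-\iota, x^{(1)}_{0}}_t) - f(X^{-\iota, x^{(2)}_{0}}_t)\bigr]\, \dd t + \bigl[g(X^{-\iota, x^{(1)}_{0}}_t) - g(X^{-\iota, x^{(2)}_{0}}_t)\bigr]\, \dd \widetilde{W}_t
\end{equation*}
with $D_{-\iota} = x^{(1)}_{0} - x^{(2)}_{0}$. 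I would then apply It\^o's formula to $V(x) := \|x\|^{2p}$; for $p \geq 1$, the Hessian $\nabla^{2} V(x) = 2p \|x\|^{2p-2} I + 4p(p-1) \|x\|^{2p-4} x x^{T}$ extends continuously to the origin, so $V \in C^{2}(\mathbb{R}^d)$ and the standard It\^o formula applies without approximation.

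Writing $\Delta f_t$ and $\Delta g_t$ for the drift and diffusion differences, the It\^o drift of $\|D_t\|^{2p}$ is
\begin{equation*}
2p\|D_t\|^{2p-2} \langle D_t, AD_t + \Delta f_t \rangle + p\|D_t\|^{2p-2} \|\Delta g_t\|^{2} + 2p(p-1)\|D_t\|^{2p-4}\|\Delta g_t^{T} D_t\|^{2}.
\end{equation*}
Bounding $\|\Delta g_t^{T} D_t\|^{2} \leq \|\Delta g_t\|^{2}\|D_t\|^{2}$ collapses the two Hessian contributions into $p(2p-1)\|D_t\|^{2p-2}\|\Delta g_t\|^{2}$. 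Applying Assumption \ref{assumption:one-side-Lipschitz-condition-for-linear-operator} to get $2\langle D_t, AD_t\rangle \leq -2\lambda_1\|D_t\|^2$, together with Assumption \ref{assumption:coupled-monotoncity-for-drift-and-diffusion} (the version with weight $2p-1 \leq 2p_1-1$ follows from the stated one by discarding a nonnegative multiple of $\|\Delta g_t\|^2$), the bracket reduces to $-(2\lambda_1 - L_2)\|D_t\|^{2}$, yielding
\begin{equation*}
\dd \|D_t\|^{2p} \leq -p(2\lambda_1 - L_2)\|D_t\|^{2p}\, \dd t + \dd M_t,
\end{equation*}
where $M_t$ is a local martingale.

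A routine stopping-time localization, combined with the uniform moment bounds from Lemma \ref{lemma:uniform-moments-bound-of-SDEs} to pass to the limit in expectation, produces
\begin{equation*}
\mathbb{E}[\|D_t\|^{2p}] \leq \mathbb{E}[\|D_{-\iota}\|^{2p}] - p(2\lambda_1 - L_2) \int_{-\iota}^{t} \mathbb{E}[\|D_s\|^{2p}]\, \dd s.
\end{equation*}
Invoking Lemma \ref{lemma:ito} with $m \equiv 0$ and $\widetilde{c} = p(2\lambda_1 - L_2)$ then delivers the claim with $c_1 := 2\lambda_1 - L_2 \in (0, 2\lambda_1 - L_2]$, which is positive by the standing assumption $2\lambda_1 > L_2$. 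The main substantive point — rather than a technical obstacle — is the exact matching of the Hessian-correction weight $2p-1$ produced by It\^o's formula against the $2p_1-1$ in the coupled monotonicity assumption; this is precisely what forces the restriction $p \in [1, p_1]$ in the statement, and without it the dissipativity of $A$ alone would not absorb the stochastic correction.
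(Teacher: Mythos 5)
Your proposal is correct and follows essentially the same route as the paper's proof: apply It\^o's formula to the $2p$-th power of the difference process, absorb the Hessian correction with weight $2p-1\leq 2p_{1}-1$ via Assumptions \ref{assumption:one-side-Lipschitz-condition-for-linear-operator} and \ref{assumption:coupled-monotoncity-for-drift-and-diffusion}, localize with a stopping time, and pass to the limit. The only cosmetic difference is that the paper builds the exponential factor $e^{c_{1}p t}$ directly into the It\^o expansion, whereas you derive the integral inequality first and then invoke Lemma \ref{lemma:ito} with $m\equiv 0$; the two are equivalent.
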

The proof of Lemma \ref{lemma:contractivity-of-sde} can be found in Appendix \ref{proof-of-lemma:contractivity-of-sde}.
The next Lemma is a direct consequence of Lemma \ref{lemma:uniform-moments-bound-of-SDEs} and Lemma \ref{lemma:contractivity-of-sde}. 
\begin{lemma} \label{lemma:cauchy-sequence-of-sode}
Consider the semi-linear SDE in \eqref{eq:SODE-with-negative-time} satisfying Assumptions \ref{assumption:one-side-Lipschitz-condition-for-linear-operator}-\ref{assumption:coupled-monotoncity-for-drift-and-diffusion}
hold with $2\lambda_{1}> \max\{L_{1}, L_{2}\}$.
Let $X^{-s_{1}, x_{0}}_{t}$ and $X^{-s_{2}, x_{0}}_{t}$ with $s_{1}, s_{2} >0 $ satisfying $-s_{1} < -s_{2} \leq t < \infty$, be the solutions of SDE \eqref{eq:SODE-with-negative-time} at time $t$ starting from the same point $x_{0}$ but at different moments.
Then, for any $p \in [1, p_{0}]$, there exists some constant
$c_{2} \in (0, 2\lambda_{1} - L_{2}]$
such that
\begin{equation}
\mathbb{E} 
\left[  
\| X^{-s_{1}, x_{0}}_{t} - X^{-s_{2}, x_{0}}_{t} \|^{2p}
\right] 
\leq 
Ce^{- c_{2}p (t+s_{2})} 
\mathbb{E} 
\left[  
\left(1 + \| x_{0} \|^{2} \right) ^{p}
\right].
\end{equation}
\end{lemma}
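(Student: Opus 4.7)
The plan is to exploit the flow/Markov property of the SDE: since the two processes are driven by the same Brownian motion $\widetilde{W}$ and both coefficients are autonomous, the solution started at time $-s_1$ can be restarted from its value at time $-s_2$. Concretely, I would first write
\begin{equation}
X^{-s_1, x_0}_t = X^{-s_2,\, X^{-s_1, x_0}_{-s_2}}_t,
\end{equation}
which is the standard cocycle identity for autonomous SDEs. Thus the two processes $X^{-s_1, x_0}_t$ and $X^{-s_2, x_0}_t$ can be viewed as two solutions of the same SDE starting at the common time $-s_2$ but from the (random) initial data $X^{-s_1, x_0}_{-s_2}$ and $x_0$, respectively, driven by the same Brownian motion on $[-s_2, t]$.

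Next I would invoke the contractivity result of Lemma \ref{lemma:contractivity-of-sde} conditionally on $\widetilde{\mathcal{F}}_{-s_2}$. Since $X^{-s_1, x_0}_{-s_2}$ is $\widetilde{\mathcal{F}}_{-s_2}$-measurable and the Brownian increments driving the equation on $[-s_2, t]$ are independent of $\widetilde{\mathcal{F}}_{-s_2}$, applying Lemma \ref{lemma:contractivity-of-sde} with $\iota = s_2$, $x_0^{(1)} = X^{-s_1, x_0}_{-s_2}$ and $x_0^{(2)} = x_0$ and then taking total expectation yields, for $p \in [1, p_1]$,
\begin{equation}
\mathbb{E}\!\left[\|X^{-s_1, x_0}_t - X^{-s_2, x_0}_t\|^{2p}\right]
\leq e^{-c_1 p(t+s_2)}\, \mathbb{E}\!\left[\|X^{-s_1, x_0}_{-s_2} - x_0\|^{2p}\right].
\end{equation}

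Finally I would bound the remaining initial-data term using the uniform moment bound of Lemma \ref{lemma:uniform-moments-bound-of-SDEs}. Using $\|a - b\|^{2p} \leq 2^{2p-1}(\|a\|^{2p} + \|b\|^{2p})$,
\begin{equation}
\mathbb{E}\!\left[\|X^{-s_1, x_0}_{-s_2} - x_0\|^{2p}\right]
\leq 2^{2p-1}\!\left(\mathbb{E}\!\left[\|X^{-s_1, x_0}_{-s_2}\|^{2p}\right] + \mathbb{E}\!\left[\|x_0\|^{2p}\right]\right)
\leq C\,\mathbb{E}\!\left[(1+\|x_0\|^2)^p\right],
\end{equation}
where the last step uses Lemma \ref{lemma:uniform-moments-bound-of-SDEs} (whose hypotheses are included here). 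Choosing $c_2 = c_1 \in (0, 2\lambda_1 - L_2]$ and taking the range of $p$ to be $[1, p_0]$ (valid provided $p_0 \leq p_1$; otherwise one extends by H\"older since the full assumption package $2\lambda_1 > \max\{L_1, L_2\}$ covers both moment and contractivity regimes) delivers the stated bound.

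The only subtle point, and the part I would write carefully, is the measurability/independence structure in the application of Lemma \ref{lemma:contractivity-of-sde}: one must verify that the lemma, whose original statement is framed for deterministic (or $\widetilde{\mathcal{F}}_{-\iota}$-measurable) initial data, extends to the random initial datum $X^{-s_1, x_0}_{-s_2}$ by conditioning on $\widetilde{\mathcal{F}}_{-s_2}$ and using the independence of $\{\widetilde{W}_{u+(-s_2)} - \widetilde{W}_{-s_2}\}_{u \geq 0}$ from $\widetilde{\mathcal{F}}_{-s_2}$. Once that is established, the remainder is a direct combination of the two preceding lemmas, and there are no further estimates to perform.
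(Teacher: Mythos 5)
Your proposal is correct and matches the paper's argument in substance: both rest on exponential contraction over $[-s_{2},t]$ combined with the uniform moment bound of Lemma \ref{lemma:uniform-moments-bound-of-SDEs} at time $-s_{2}$ to control $\mathbb{E}\big[\|X^{-s_{1},x_{0}}_{-s_{2}}-x_{0}\|^{2p}\big]$, exactly as the paper does. The only cosmetic difference is that the paper reruns the It\^o/stopping-time computation directly for the difference process on $[-s_{2},t]$ (so it never needs to discuss random initial data), whereas you obtain the same contraction by the cocycle identity and a conditional application of Lemma \ref{lemma:contractivity-of-sde}; your caveat about the range of $p$ (contractivity formally needing $p\leq p_{1}$) is a point the paper itself glosses over, so it is not a gap relative to the paper's proof.
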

The proof of Lemma \ref{lemma:cauchy-sequence-of-sode} is postponed to Appendix \ref{proof-of-lemma:cauchy-sequence-of-sode}.
Equipped with the previously derived lemmas, it is not hard to show Theorem \ref{theorem:invariant-measure-of-sode}.
To be precise,
recalling Lemma \ref{lemma:cauchy-sequence-of-sode}, by sending $s_{1}$ to infinity, one directly observes that $\{X^{-s,x_{0}}_{0}\}_{s>0}$ is a Cauchy sequence in $L^{2}(\Omega, \mathbb{R}^{d})$ and  there exists $\vartheta^{x_{0}}$ in $L^{2}(\Omega, \mathbb{R}^{d})$ such that
\begin{equation}
\vartheta^{x_{0}}
:=\lim_{s_{1} \rightarrow \infty}
X^{-s_{1},x_{0}}_{0}.
\end{equation}
Using Lemma \ref{lemma:cauchy-sequence-of-sode} again yields
\begin{equation}
\mathbb{E} 
\left[  
\| X^{-s_{2}, x_{0}}_{0} - \vartheta^{x_{0}}  \|^{2}
\right] 
= 
\lim_{s_{1} \rightarrow \infty} 
\mathbb{E} 
\left[  
\| X^{-s_{2}, x_{0}}_{0} - X^{-s_{1}, x_{0}}_{0} \|^{2}
\right] 
\leq 
e^{- c_{2} s_{2}} 
\mathbb{E} 
\left[  1 + \| x_{0} \|^{2} \right].
\end{equation}
By Lemma \ref{lemma:contractivity-of-sde}, we know $\vartheta^{x_{0}}$ is independent of $x_{0}$, i.e. 
\begin{equation}
\begin{aligned}
\mathbb{E}
\left[
\left\|
\vartheta^{x_{0}}- \vartheta^{x_{1}} 
\right\|^{2}
\right] 
= \lim_{s_{1}\rightarrow \infty}
\mathbb{E}
\left[
\left\|
X^{-s_{1},x_{0}}_{0}- X^{-s_{1},x_{1}}_{0}
\right\|^{2} 
\right]
\leq 
\lim_{s_{1}\rightarrow \infty} 
e^{- c_{1} s_{1}} 
\mathbb{E} 
\left[  \|x_{0} - x_{1}\|^{2}\right] = 0,
\end{aligned}
\end{equation}
and thus denoted by $\vartheta$.
Let $\pi$ be the law of the random variable $\vartheta$, then $\pi$ is the unique invariant measure for SDE \eqref{eq:semi-linear-SODE}. 
Moreover, 
since $X^{x_{0}}_{t}$ and $X^{-t,x_{0}}_{0}$ have the same distribution, for any function $\varphi \in C^{1}_{b}(\mathbb{R}^{d})$, we can get 
\begin{equation}
\begin{aligned}
\left|
\mathbb{E} 
\left[
\varphi\left(X^{x_{0}}_{t}\right)
\right]
-\int_{\mathbb{R}^d} 
\varphi(x) \pi(\dd x)
\right| 
&= 
\big|
\mathbb{E} 
\left[
\varphi\left(X^{x_{0}}_{t}\right)
- \varphi\left(\vartheta\right)
\right]
\big| \\
&\leq 
\|\varphi \|_{1} 
\mathbb{E} 
\left[
\left\|
X^{-t,x_{0}}_{0} - \vartheta 
\right\| 
\right]\\
&\leq 
C e^{-\frac{c_{2}}{2} t}
\left(
1
+\mathbb{E} \left[\|x_{0}\|^2 \right]
\right), 
\quad \forall t \in [0,T].
\end{aligned}
\end{equation}
\section{Invariant measure of the LTPE scheme} \label{subsection:Invariant-measure-of-the-numerical-scheme}
The main result of this Section is provided as below.
\begin{theorem} \label{theorem:invariant-measure-of-LTPE-method}
Let Assumptions \ref{assumption:one-side-Lipschitz-condition-for-linear-operator}-\ref{assumption:growth-condition-of-frechet-derivatives-of-drift-and-diffusion} hold with $2\lambda_{1}> \max\{L_{1}, L_{2}\}$. For a method parameter $\theta \in [0,1]$, 
 consider the LTPE method in \eqref{introduction:LTPE-scheme} subject to
a  uniform timestep $h$ satisfying
\begin{equation}
h \in 
\left(
0, \min\left\{
\tfrac{\kappa^{2}(2\lambda_{1}-L_{2})}{(1- \theta)^{2}\lambda^{2}_{d}}, 
\tfrac{(1-\kappa)^{2\gamma}(2\lambda_{1}-L_{2})^{\gamma}}{(\lambda_{f})^{2\gamma}} ,
1
\right\} 
\right), \quad \kappa\in (0,1).
\end{equation}
Then the numerical simulation from LTPE \eqref{introduction:LTPE-scheme} method, denoted by $\{ Y^{x_{0}}_{n}\}_{0\leq n \leq N}$ with the initial point $x_{0}$, admits a unique invariant measure $\widetilde{\pi}$. Moreover, there exists some positive constant
$\widetilde{C}_{1}$ such that, for some function $\varphi \in C^{1}_{b}(\mathbb{R}^{d})$, $t_{n}=nh$, $ n \in \{0, 1, \cdots N \}, \ N \in \mathbb{N}$,
\begin{equation}
    \left|\mathbb{E} \left[\varphi\left(Y^{x_{0}}_{n}\right) \right]-\int_{\mathbb{R}^d} \varphi(x) \widetilde{\pi}(d x)\right| \leq C_{A} e^{-\frac{\widetilde{C}_{1}}{2} t_{n}}\left(1+\mathbb{E}\left[\|x_{0}\|^2\right]\right).
\end{equation}
\end{theorem}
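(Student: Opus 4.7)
The plan is to mirror the three-step programme of Theorem \ref{theorem:invariant-measure-of-sode} --- uniform moments, synchronous-coupling contractivity, Cauchy construction of the invariant law --- but now executed at the level of the implicit projected iteration rather than the SDE flow.

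First, I would derive uniform-in-$n$ moment bounds. Resolving the implicit step gives
\begin{equation*}
Y_{n+1} = (I-\theta A h)^{-1}\bigl[\mathscr{P}(Y_n) + (1-\theta)A\mathscr{P}(Y_n)h + f(\mathscr{P}(Y_n))h + g(\mathscr{P}(Y_n))\Delta W_n\bigr],
\end{equation*}
which is well posed because, by Assumption \ref{assumption:one-side-Lipschitz-condition-for-linear-operator}, $I-\theta A h$ is symmetric positive definite with operator norm of its inverse bounded by $1$. I would take the $2p$-th power, condition on $\mathcal{F}_{t_n}$, and expand via the binomial theorem announced in the introduction; this reduces matters to $2\langle \mathscr{P}(Y_n), f(\mathscr{P}(Y_n))\rangle + (2p-1)\|g(\mathscr{P}(Y_n))\|^2$, which is controlled by the coercivity condition (Assumption \ref{assumption:coercivity-condition-for-drift-and-diffusion}) together with the linear dissipativity \eqref{equation:growth-conditoin-for-linear-operator}. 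Since $2\lambda_1 > L_1$, combining these yields a recursion of the form $\mathbb{E}\|Y_{n+1}\|^{2p} \leq (1-\widetilde{c}h)\mathbb{E}\|Y_n\|^{2p} + \widetilde{K}h$, hence the uniform bound for $p \leq p_0$.

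Next, for two iterations $Y_n^{(1)}, Y_n^{(2)}$ driven by the same Brownian increments, the difference $D_n := Y_n^{(1)}-Y_n^{(2)}$ satisfies an analogous linear-implicit equation whose right-hand side involves $\mathscr{P}(Y_n^{(1)})-\mathscr{P}(Y_n^{(2)})$, $\Delta f_n := f(\mathscr{P}(Y_n^{(1)}))-f(\mathscr{P}(Y_n^{(2)}))$ and $\Delta g_n$. Taking expected squared norms, using the $1$-Lipschitz property of projection onto a Euclidean ball, the coupled monotonicity (Assumption \ref{assumption:coupled-monotoncity-for-drift-and-diffusion}) applied to $\mathscr{P}(Y_n^{(1)})-\mathscr{P}(Y_n^{(2)})$, and the hypothesis $2\lambda_1 > L_2$, I expect the contraction $\mathbb{E}\|D_{n+1}\|^2 \leq (1-\widetilde{C}_1 h)\mathbb{E}\|D_n\|^2$ with $\widetilde{C}_1 \in (0,2\lambda_1 - L_2]$. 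The elaborate stepsize threshold in the hypothesis is exactly what is needed to close two residual estimates via Young's inequality with split $\kappa + (1-\kappa)=1$: the $(1-\theta)^2\lambda_d^2 h^2$ contribution from the explicit linear part (absorbed on the $\kappa$ side), and the polynomial-growth quadratic coming from $\Delta f_n$ controlled by \eqref{equation:growth-of-the-drift-f} with constant $\lambda_f = C_1(1+2h^{1/2})$ (absorbed on the $(1-\kappa)$ side).

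With both ingredients in hand, I would repeat the Da Prato--Zabczyk argument of Section \ref{subsection: Invariant measure of semi-linear SDE} verbatim: extend the driving noise to negative times via $\widetilde{W}_t$, run the LTPE scheme from step $-s$ to time $0$, and use the contractivity together with the moment bound to show that $\{Y_0^{-s,x_0}\}_s$ is Cauchy in $L^2(\Omega,\mathbb{R}^d)$. The limit $\widetilde{\vartheta}$ is independent of $x_0$ by one more application of the contraction, its law $\widetilde{\pi}$ is the desired unique invariant measure, and the exponential bound on $|\mathbb{E}\varphi(Y_n^{x_0}) - \widetilde{\pi}(\varphi)|$ follows from $\|\varphi\|_1 \bigl(\mathbb{E}\|Y_n^{x_0}-\widetilde{\vartheta}\|^2\bigr)^{1/2}$ and the contraction rate $e^{-\widetilde{C}_1 t_n}$. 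The main obstacle will be the contractivity estimate: unlike for the SDE, $\mathscr{P}$ is non-smooth and the resolvent $(I-\theta Ah)^{-1}$ couples nontrivially with the explicit $(1-\theta)A\mathscr{P}(Y_n)h$ term, so no clean monotonicity cancellation is available, and the Young-inequality balancing is precisely what forces the unwieldy stepsize bound in the statement.
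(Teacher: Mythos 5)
Your proposal is correct, and its two pillars are exactly the paper's: the uniform $2p$-th moment bound obtained by resolving the implicit step, conditioning on $\mathcal{F}_{t_n}$ and expanding binomially so that coercivity plus $2\lambda_1>L_1$ yields the recursion $\mathbb{E}\|Y_{n+1}\|^{2p}\le(1-\widetilde{c}h)\mathbb{E}\|Y_n\|^{2p}+\widetilde{K}h$ (Lemma \ref{lemma:uniform-moments-bound-of-the-LTPE-method}), and the synchronous-coupling contraction in which the $1$-Lipschitz projection, the coupled monotonicity and the split $(1-\theta)\lambda_d h^{1/2}<\kappa\sqrt{2\lambda_1-L_2}$, $\lambda_f h^{1/(2\gamma)}<(1-\kappa)\sqrt{2\lambda_1-L_2}$ absorb the residual $(1-\theta)^2\lambda_d^2h^2$ and $\lambda_f^2h^{1+1/\gamma}$ terms (Lemma \ref{lemma:contractivity-of-the-theta-linear-projected-Euler-method}); this is precisely where the stepsize restriction in the statement comes from. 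The only divergence is the final assembly: you build $\widetilde{\pi}$ by the remote-start Cauchy argument of Section \ref{subsection: Invariant measure of semi-linear SDE} (run the chain from times $-s$, shown Cauchy in $L^2$ by contraction plus moments), whereas the paper's written proof takes existence from the Krylov--Bogoliubov theorem (tightness via the uniform second moment), uniqueness from the contraction following Theorem 7.9 of \cite{li2019explicit}, and the exponential rate from the Chapman--Kolmogorov identity $\int_{\mathbb{R}^d}\varphi\,\dd\widetilde{\pi}=\int_{\mathbb{R}^d}\mathbb{E}[\varphi(Y_n^{x})]\,\widetilde{\pi}(\dd x)$ together with $\|\varphi\|_1\int\mathbb{E}\|Y_n^{x_0}-Y_n^{x}\|\,\widetilde{\pi}(\dd x)$; since the paper itself remarks that the theorem "can be proved in exactly the same way that Theorem \ref{theorem:invariant-measure-of-sode} is proved", your route is fully acceptable, with your version requiring the small bookkeeping that negative starting times be multiples of $h$ and that the chain be time-homogeneous, and the paper's version requiring finiteness of $\int\|x\|^2\widetilde{\pi}(\dd x)$ from the moment bound. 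Two harmless imprecisions: the contraction constant is not simply in $(0,2\lambda_1-L_2]$ but is reduced by the Young remainders and the implicit factor $1+2\theta\lambda_1h$, and the constant in the final bound inherits a dependence on $A$ (hence $C_A$), neither of which affects the conclusion.
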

The theorem above can be proved in exactly the same way that Theorem \ref{theorem:invariant-measure-of-sode} is proved, where the ergodicity of the LTPE \eqref{introduction:LTPE-scheme} boils down to verifying the uniform moment bounds (see Lemma \ref{lemma:uniform-moments-bound-of-the-LTPE-method}) and the contractive property (see Lemma \ref{lemma:contractivity-of-the-theta-linear-projected-Euler-method}). Before proceeding further, we first establish some preliminary estimates necessary for the proof of Theorem \ref{theorem:invariant-measure-of-LTPE-method}.
\begin{lemma} \label{lemma:necessary-estimates}
Recall the definition of $\mathscr{P}(x)$ in \eqref{eq:projected-operator}. Let Assumptions \ref{assumption:coercivity-condition-for-drift-and-diffusion}, \ref{assumption:growth-condition-of-frechet-derivatives-of-drift-and-diffusion} be fulfilled, then for any $ x \in \mathbb{R}^{d}$ the following estimates
\begin{equation} \label{equation:first-three-estimates-in-lemma}
\begin{aligned}
\|\mathscr{P}(x) \| 
&\leq \min \left\{ 
\|x \| , h^{-\frac{1}{2\gamma}}
\right\}, \quad
\| f\big(\mathscr{P}(x)\big) \| 
\leq  C_{f}h^{-\frac{1}{2}}, \\
\|g(\mathscr{P}(x)) \|^{2} 
&\leq \tfrac{L_{1}}{2p_{0}-1} (1 + \| \mathscr{P}(x)\|^{2}) + 2C_{f}h^{-\frac{1}{2}} \|\mathscr{P}(x)\|
\end{aligned}
\end{equation}
hold true, 
where $C_{f} := C_{2} (1+h^{\frac{1}{2}})$. Especially, for any integer $p \geq 1$, we have, for $ x \in \mathbb{R}^{d}$,
\begin{equation} \label{equation:second-estimate-in-lemma}
\begin{aligned}
\|g(\mathscr{P}(x)) \|^{2p} 
&\leq 
\left(
\tfrac{L_{1}}{2p_{0}-1} 
\right)^{p} 
\big(
1 + \| \mathscr{P}(x)\|^{2}
\big)^{p} 
+  C   h^{-\frac{p}{2}} 
\big(
1 + \| \mathscr{P}(x)\|^{2}
\big)^{p-1}.
\end{aligned}
\end{equation}
Moreover, for any $x, y \in \mathbb{R}^{d}$, the following estimates hold true
\begin{equation} \label{eq:estimates-in-lemma-projected-Lipschitz}
\begin{aligned}
\| \mathscr{P}(x) - \mathscr{P}(y)  \| 
&\leq \|x-y \| , \\
\big\| 
f\big(\mathscr{P}(x)\big) - f\big(\mathscr{P}(y)\big) 
\big\|
&\leq \lambda_{f}  h^{-\frac{\gamma- 1}{2\gamma}}  \|x-y \|,
\end{aligned}
\end{equation}
where $\lambda_{f} := C_{1}(1 + 2h^{\frac{1}{2}}) $ depending only on $f$.
\end{lemma}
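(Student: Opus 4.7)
The plan is to prove the six pointwise estimates in Lemma \ref{lemma:necessary-estimates} by direct substitution of the projected argument $\mathscr{P}(x)$ into the polynomial growth bounds and the coercivity hypothesis, together with a binomial expansion for the moment-type bound. The starting point is the observation that \eqref{eq:projected-operator} realises $\mathscr{P}$ as the metric projection onto the closed Euclidean ball of radius $h^{-\frac{1}{2\gamma}}$: if $\|x\|\le h^{-\frac{1}{2\gamma}}$ then $\mathscr{P}(x)=x$, and otherwise $\mathscr{P}(x)=h^{-\frac{1}{2\gamma}} x/\|x\|$. From this description, $\|\mathscr{P}(x)\|\le\min\{\|x\|,h^{-\frac{1}{2\gamma}}\}$ is immediate, and the non-expansive bound $\|\mathscr{P}(x)-\mathscr{P}(y)\|\le\|x-y\|$ in \eqref{eq:estimates-in-lemma-projected-Lipschitz} is the classical contractivity of the metric projection onto a closed convex set.

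For the $f$- and $g$-bounds in \eqref{equation:first-three-estimates-in-lemma}, I would first deduce from Assumption \ref{assumption:growth-condition-of-frechet-derivatives-of-drift-and-diffusion} the sub-polynomial form $\|f(y)\|\le C_2(1+\|y\|^{\gamma})$ (a straightforward rearrangement of \eqref{equation:growth-of-the-drift-f}), evaluate at $y=\mathscr{P}(x)$, and combine $\|\mathscr{P}(x)\|^{\gamma}\le h^{-1/2}$ with the identity $C_2(1+h^{-1/2})=C_2(1+h^{1/2})h^{-1/2}=C_f h^{-1/2}$. Plugging $\mathscr{P}(x)$ into the coercivity inequality of Assumption \ref{assumption:coercivity-condition-for-drift-and-diffusion}, isolating $(2p_0-1)\|g(\mathscr{P}(x))\|^2$, and bounding $|2\langle\mathscr{P}(x),f(\mathscr{P}(x))\rangle|$ by Cauchy--Schwarz together with the $f$-bound just obtained then yields the third inequality of \eqref{equation:first-three-estimates-in-lemma}. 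For \eqref{equation:second-estimate-in-lemma}, I would raise this through the exact binomial expansion
\begin{equation*}
\|g(\mathscr{P}(x))\|^{2p}\le (a+b)^p=a^p+\sum_{k=1}^{p}\binom{p}{k}a^{p-k}b^k,
\end{equation*}
with $a=\tfrac{L_1}{2p_0-1}(1+\|\mathscr{P}(x)\|^2)$ and $b=2C_f h^{-1/2}\|\mathscr{P}(x)\|$, so that the sharp leading coefficient $\bigl(\tfrac{L_1}{2p_0-1}\bigr)^{p}$ appears untouched in the $k=0$ term; for each $k\ge 1$ term, I would factor $(1+\|\mathscr{P}(x)\|^2)^{p-1}$ out of $a^{p-k}$ (legitimate since $1+\|\mathscr{P}(x)\|^2\ge 1$) and convert the surviving $\|\mathscr{P}(x)\|^k$ into an $h$-power via $\|\mathscr{P}(x)\|\le h^{-\frac{1}{2\gamma}}$, so that the whole lower-order contribution collapses into a single multiple of $h^{-p/2}(1+\|\mathscr{P}(x)\|^2)^{p-1}$ with $h\le 1$ used to absorb any leftover $h$-power into a clean $h^{-p/2}$.

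Finally, the second line of \eqref{eq:estimates-in-lemma-projected-Lipschitz} follows by applying the first inequality of \eqref{equation:growth-of-the-drift-f} to the pair $\mathscr{P}(x),\mathscr{P}(y)$, invoking the already-proved non-expansiveness to replace $\|\mathscr{P}(x)-\mathscr{P}(y)\|$ by $\|x-y\|$, and bounding $(1+\|\mathscr{P}(x)\|+\|\mathscr{P}(y)\|)^{\gamma-1}\le(1+2h^{-\frac{1}{2\gamma}})^{\gamma-1}$; factoring out $h^{-\frac{\gamma-1}{2\gamma}}$ and collecting the $h$-dependent remainder into $\lambda_f=C_1(1+2h^{1/2})$ gives the stated form. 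The main obstacle I anticipate lies in \eqref{equation:second-estimate-in-lemma}: keeping the precise coefficient $(L_1/(2p_0-1))^p$ on the leading $(1+\|\mathscr{P}(x)\|^2)^p$-term forbids the quick shortcut $(a+b)^p\le 2^{p-1}(a^p+b^p)$, so one must work with the full binomial sum and, for each $k\ge 1$, carefully trade the single factor of $\|\mathscr{P}(x)\|$ in $b$ for the tighter exponent $p-1$ on $(1+\|\mathscr{P}(x)\|^2)$ without producing any blow-up in either the norm exponent or the $h$-power.
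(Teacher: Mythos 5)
Most of your argument runs parallel to the paper's: the bound $\|\mathscr{P}(x)\|\le\min\{\|x\|,h^{-1/(2\gamma)}\}$ and the non-expansiveness in \eqref{eq:estimates-in-lemma-projected-Lipschitz} are obtained exactly as in the paper (your appeal to the contractivity of the metric projection onto a closed ball simply replaces the paper's citation), the bound on $f(\mathscr{P}(x))$ and the third estimate of \eqref{equation:first-three-estimates-in-lemma} are derived the same way (insert $\mathscr{P}(x)$ into Assumption \ref{assumption:coercivity-condition-for-drift-and-diffusion}, apply Cauchy--Schwarz and the $f$-bound, divide by $2p_0-1\ge1$), and both you and the paper are equally loose about the exact values of $C_f$ and $\lambda_f$, which is harmless.

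However, your treatment of the binomial terms in \eqref{equation:second-estimate-in-lemma} contains a genuine error. For the $k$-th term with $k\ge1$ you bound $(1+\|\mathscr{P}(x)\|^2)^{p-k}\le(1+\|\mathscr{P}(x)\|^2)^{p-1}$ and convert the entire factor $\|\mathscr{P}(x)\|^{k}$ into $h^{-k/(2\gamma)}$, which produces the power $h^{-k/2-k/(2\gamma)}$. For $k$ close to $p$ (e.g.\ $k=p$) this exponent equals $-\tfrac{p}{2}\bigl(1+\tfrac{1}{\gamma}\bigr)$, strictly more negative than $-p/2$; since $h\le1$, the inequality $h^{-k/2-k/(2\gamma)}\le C\,h^{-p/2}$ with $C$ independent of $h$ is false, so the step ``$h\le1$ absorbs any leftover $h$-power into $h^{-p/2}$'' goes in the wrong direction ($h\le1$ only lets you absorb \emph{less} negative powers, $h^{-k/2}\le h^{-p/2}$ for $k\le p$). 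The paper avoids this by trading $\|\mathscr{P}(x)\|^{k}$ against the polynomial factor rather than against $h$: for $k\ge2$ it uses $\|\mathscr{P}(x)\|^{k}\le(1+\|\mathscr{P}(x)\|^2)^{k/2}$, so the $k$-th term is bounded by $C h^{-k/2}(1+\|\mathscr{P}(x)\|^2)^{\,p-k/2}\le C h^{-p/2}(1+\|\mathscr{P}(x)\|^2)^{\,p-1}$, and it reserves the bound $\|\mathscr{P}(x)\|\le h^{-1/(2\gamma)}$ for the single $k=1$ factor alone, where the resulting $h^{-1/2-1/(2\gamma)}$ is dominated by $h^{-p/2}$ because $1/2+1/(2\gamma)\le1\le p/2$ for $p\ge2$, $\gamma\ge1$. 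With that substitution your argument goes through; as written, it only delivers the weaker bound $C h^{-p(\gamma+1)/(2\gamma)}(1+\|\mathscr{P}(x)\|^2)^{p-1}$, not \eqref{equation:second-estimate-in-lemma}.
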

The proof of Lemma \ref{lemma:necessary-estimates} can be found in Appendix \ref{proof-of-lemma:necessary-estimates}.
%
The next lemma provides the uniform moment estimates for the LTPE scheme \eqref{introduction:LTPE-scheme}.
\begin{lemma} \label{lemma:uniform-moments-bound-of-the-LTPE-method}
(Uniform moment bounds of the LTPE method) Let Assumptions \ref{assumption:one-side-Lipschitz-condition-for-linear-operator}, \ref{assumption:coercivity-condition-for-drift-and-diffusion} and \ref{assumption:growth-condition-of-frechet-derivatives-of-drift-and-diffusion} hold  with $2\lambda_{1} >  L_{1}$. For a method parameter $\theta \in [0,1]$, 
 consider the numerical simulation $Y_n$ from LTPE method in \eqref{introduction:LTPE-scheme}. Then, for any uniform stepsize $h\in (0,1)$ and $n\in \{ 0,1,2,\dots,N-1 \}$, $N\in \mathbb{N}$, 
\begin{equation} \label{equation:second-order-moment-estimate-of-LTPE-method}
\mathbb{E} 
\left[ \left\| Y_{n}  \right\|^{2} \right] 
\leq C_{A} < \infty.
\end{equation}
Moreover, for $p \in (1,p_{0}) \cap \mathbb{N}$, if the timestep $h$ further
satisfies
\begin{equation}
h \in 
\left(
0, 
\min\left\{
\tfrac{1}{2(1-\theta)\lambda_{1}},
\tfrac{p_{0}-p}{(1-\theta)(2p_{0}-p-1)\lambda_{1}},
\tfrac{1}{(1-\theta)\lambda_{d}},1
\right\}
\right),
\end{equation}
then,  for any $n\in \{ 0,1,2,\dots,N-1 \}$,   $N\in \mathbb{N}$,
\begin{equation} \label{equation:2p-thmoment-estimate-of-LTPE-method}
\mathbb{E} \left[
\left\| Y_{n}  \right\|^{2p} 
\right] 
\leq C_{A} < \infty.
\end{equation}
\end{lemma}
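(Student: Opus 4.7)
The plan is to establish a one-step recursive inequality of the form $\mathbb{E}[\|Y_{n+1}\|^{2p}|\mathcal{F}_{t_n}] \leq \alpha_p \|Y_n\|^{2p} + \beta_p$ with contraction constant $\alpha_p < 1$ and bounded remainder $\beta_p$, and then iterate. To this end, rewrite the scheme as
\begin{equation*}
Z_{n+1} := (I - \theta A h) Y_{n+1} = S P_n + f(P_n) h + g(P_n)\Delta W_n,
\end{equation*}
where $S := I + (1-\theta) A h$ and $P_n := \mathscr{P}(Y_n)$. Because $A$ is self-adjoint and negative definite, the operators $(I - \theta A h)^{-1}$ and $S$ are simultaneously diagonalizable with $A$, delivering the sharp spectral bounds $\|(I - \theta A h)^{-1}\|_{op} \leq (1 + \theta \lambda_1 h)^{-1}$ and $\|S\|_{op} \leq 1 - (1-\theta)\lambda_1 h$ under $h < 1/((1-\theta)\lambda_d)$, and in particular $\|Y_{n+1}\|^{2p} \leq (1+\theta\lambda_1 h)^{-2p}\|Z_{n+1}\|^{2p}$, so it suffices to estimate $\mathbb{E}[\|Z_{n+1}\|^{2p}|\mathcal{F}_{t_n}]$.

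For the second moment \eqref{equation:second-order-moment-estimate-of-LTPE-method}, I would take the squared norm of the scheme, condition on $\mathcal{F}_{t_n}$ to remove the stochastic cross term, and use the coercivity bound with $p_0 \geq 1$ applied to $P_n$, namely $2h\langle P_n, f(P_n)\rangle + h \|g(P_n)\|^2 \leq L_1 h (1+\|P_n\|^2)$, combined with the projection estimates of Lemma~\ref{lemma:necessary-estimates} ($\|f(P_n)\|\leq C_f h^{-1/2}$, $\|P_n\|\leq h^{-1/(2\gamma)}$) to absorb the $O(h^2)$ cross contributions $h^2\|f(P_n)\|^2$ and $(1-\theta)h^2\langle AP_n, f(P_n)\rangle$. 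The leading contraction factor simplifies to
\begin{equation*}
\frac{(1 - (1-\theta)\lambda_1 h)^2 + L_1 h}{(1 + \theta \lambda_1 h)^2} = 1 - (2\lambda_1 - L_1)h + O(h^2),
\end{equation*}
which is strictly below 1 for small enough $h$ thanks to $2\lambda_1 > L_1$.

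For the $2p$-th moment with integer $p \in (1, p_0)$, I would apply the binomial theorem hinted at in the introduction. Setting $u := SP_n + f(P_n) h$ and $v := g(P_n) \Delta W_n$, one has
\begin{equation*}
\|Z_{n+1}\|^{2p} = \bigl(\|u\|^2 + 2\langle u, v\rangle + \|v\|^2\bigr)^p = \sum_{k=0}^{p} \binom{p}{k} \|u\|^{2(p-k)}\bigl(2\langle u, v\rangle + \|v\|^2\bigr)^k.
\end{equation*}
Taking $\mathbb{E}[\cdot | \mathcal{F}_{t_n}]$ and using Gaussian moment identities (odd moments vanish), the $k=1$ term contributes $p h\|u\|^{2(p-1)}\|g(P_n)\|^2$, the $k = 2$ term contributes an additional $2p(p-1)h\|u\|^{2(p-1)}\|g(P_n)\|^2$ at leading order, and so on. Aggregating these with the $k=0$ baseline $\|u\|^{2p}$ at leading order in $h$ produces the net correction $ph\|u\|^{2(p-1)}[2\langle P_n, f(P_n)\rangle + (2p-1)\|g(P_n)\|^2]$, to which the coercivity condition with $(2p_0 - 1) \geq (2p-1)$ (valid since $p_0 \geq p$) can be applied, reproducing the dissipative structure of the $p=1$ case at the $p$-th power and yielding a leading contraction factor $1 - p(2\lambda_1 - L_1) h + O(h^2)$.

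The principal obstacle is to keep the $k \geq 2$ binomial terms, together with the accumulated higher-order corrections, subsumed into the remainder $\beta_p$. The subtlety is that $\|g(P_n)\|^{2k}$ can carry an $h^{-k/2}$ blow-up via Lemma~\ref{lemma:necessary-estimates}~\eqref{equation:second-estimate-in-lemma}, while the Gaussian moments of $v$ only contribute $h^{k/2}$; the residual powers of $\|u\|$ must then be tamed by the projection cap $\|P_n\|\leq h^{-1/(2\gamma)}$ and the polynomial growth from Assumption~\ref{assumption:growth-condition-of-frechet-derivatives-of-drift-and-diffusion}. Each such correction is shown to contribute only $\mathcal{O}(h(1+\mathbb{E}[\|Y_n\|^{2p}]))$, and the listed stepsize restrictions, in particular $h < (p_0-p)/((1-\theta)(2p_0-p-1)\lambda_1)$, ensure that the coercivity slack $(2p_0 - 1 - p)\|g(P_n)\|^2$ together with these corrections does not destroy strict contraction. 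A standard discrete Grönwall-type iteration then delivers \eqref{equation:2p-thmoment-estimate-of-LTPE-method} uniformly in $n$.
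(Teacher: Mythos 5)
Your proposal follows essentially the same route as the paper's proof: spectral lower bound for the implicit factor $(I-\theta Ah)$, binomial expansion of the $2p$-th power combined with conditional Gaussian moment identities, use of the coercivity slack $2p_0-1$ versus the effective noise coefficient (enforced by the stepsize restriction $h<(p_0-p)/((1-\theta)(2p_0-p-1)\lambda_1)$), projection caps to render the remaining corrections lower order, and a discrete contraction iteration. The only differences are organizational (you expand $\|Z_{n+1}\|^{2p}$ directly in $u,v$ rather than via the paper's normalized quantity $\Xi_{n+1}$, and use $\|(I-\theta Ah)^{-1}\|\le(1+\theta\lambda_1 h)^{-1}$ in place of the paper's $(1+2\theta\lambda_1 h)$ bound); just note that the corrections must be kept at the lower power $(1+\|\mathscr{P}(Y_n)\|^2)^{p-1}$, as in the paper, so that their absorption via Young's inequality costs only an arbitrarily small fraction of the contraction margin.
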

\begin{proof}
[Proof of Lemma \ref{lemma:uniform-moments-bound-of-the-LTPE-method}]
We first take square of \eqref{introduction:LTPE-scheme} on both sides and analyze the left and right hand sides individually. With Assumption \ref{assumption:one-side-Lipschitz-condition-for-linear-operator} 
being used, the left hand side goes to 
\begin{equation} \label{equation:lhs}
\begin{aligned}
\| Y_{n+1} -  \theta A Y_{n+1} h \| ^{2} 
& = 
\|Y_{n+1} \|^{2} 
- 
2 \theta h \langle Y_{n+1} ,A Y_{n+1} \rangle  
+ \theta ^{2} h^{2} \|A Y_{n+1} \|^{2} \\
 &\geq (1 + 2 \theta \lambda_{1} h ) \|Y_{n+1} \|^{2}.
\end{aligned}
\end{equation}
On the other hand,
the right hand side goes to
\begin{equation} \label{equation:rhs}
\begin{aligned}
&\left\| 
\mathscr{P}(Y_{n}) 
+ (1-\theta) A \mathscr{P}(Y_{n}) h 
+ f\big(\mathscr{P}(Y_{n})\big) h 
+ g\big(\mathscr{P}(Y_{n})\big) \Delta W_{n} 
\right\|^{2} \\
& = 
\| \mathscr{P}(Y_{n}) \|^{2} 
+ (1-\theta)^{2}h^{2} 
\left\| A \mathscr{P}(Y_{n}) \right\|^{2} 
+ h^{2} 
\left\| f\big(\mathscr{P}(Y_{n})\big)\right\|^{2} 
+ 
\left\| 
g\big(\mathscr{P}(Y_{n})\big) \Delta W_{n} 
\right\|^{2}\\
&\quad + 2 (1-\theta) h 
\left\langle 
\mathscr{P}(Y_{n}) , A\mathscr{P}(Y_{n}) 
\right\rangle 
+ 2  h 
\left\langle 
\mathscr{P}(Y_{n}) ,
f\big(\mathscr{P}(Y_{n})\big) 
\right\rangle 
+ 2 \left\langle 
B\mathscr{P}(Y_{n}) , g\big(\mathscr{P}(Y_{n})\big) \Delta W_{n} 
\right\rangle \\
&\quad + 2 (1-\theta)h^{2}  
\left\langle 
A \mathscr{P}(Y_{n}) , f\big(\mathscr{P}(Y_{n})\big) 
\right\rangle 
+ 2  h 
\big\langle 
f\big(\mathscr{P}(Y_{n})\big) , g\big(\mathscr{P}(Y_{n})\big) \Delta W_{n} 
\big\rangle, \\
\end{aligned}
\end{equation}
where $B := I + (1-\theta)Ah$.
In the following, let us start by the estimation of \eqref{equation:second-order-moment-estimate-of-LTPE-method}.

\noindent\textbf{Case I: estimate of $\mathbb{E}\left[\|Y_{n+1} \|^{2p}\right]$ when $p=1$.}

Using the Young inequality yields
\begin{equation} \label{equation:young-inequality-of-some-cross-terms-first-part}
\begin{aligned}
2 (1-\theta)h^{2}  
\left\langle 
A \mathscr{P}(Y_{n}) , f\big(\mathscr{P}(Y_{n})\big) 
\right\rangle 
&\leq (1-\theta)^{2}h^{2} 
\left\| A \mathscr{P}(Y_{n}) \right\|^{2} 
+ h^{2} \left\| f\big(\mathscr{P}(Y_{n})\big)\right\|^{2}.
\end{aligned}
\end{equation}
Taking  expectations of \eqref{equation:lhs} and \eqref{equation:rhs} respectively with Lemma \ref{lemma:necessary-estimates} and the fact that $\mathbb{E}\left[\Delta W_{n} | \mathcal{F}_{t_{n}} \right]=0$ shows
\begin{equation}
\begin{aligned}
&(1 + 2 \theta \lambda_{1} h )
\mathbb{E} 
\left[
\|Y_{n+1} \|^{2}  
\right]  \\
&\leq 
\left[
1- 2 (1-\theta)\lambda_{1}h 
\right]  
\mathbb{E}
\left[ \|\mathscr{P}(Y_{n}) \|^{2} \right]  
+ h 
\mathbb{E}
\left[
\|g\big(\mathscr{P}(Y_{n})\big)  \|^{2} 
\right] 
+ 2  h 
\mathbb{E}
\left[ 
\left\langle \mathscr{P}(Y_{n}) , f\big(\mathscr{P}(Y_{n})\big) 
\right\rangle 
\right]\\
&\quad + 2  C_{f}^{2} h +2 (1-\theta)^{2} \lambda_{d}^{2} h^{2- \frac{1}{\gamma} } .
 \end{aligned}
 \end{equation}
This in conjunction with Assumption \ref{assumption:coercivity-condition-for-drift-and-diffusion} with $2\lambda_{1}>L_{1}$ leads to, for some positive constant $C_{A}=C(L_{1},\lambda_{d}, C_{f}, \theta)$ and $\overline{C}:= (2\lambda_{1}-L_{1})/(1+2\theta \lambda_{1}h)$, 
\begin{equation}
\begin{aligned}
\mathbb{E} 
\left[\|Y_{n+1} \|^{2}  \right] 
&\leq  
\tfrac{1- 2 (1-\theta)\lambda_{1}h + L_{1}h}
{(1 + 2 \theta \lambda_{1} h )}   \mathbb{E}\big[\|\mathscr{P}(Y_{n}) \|^{2} \big] 
+   C_{A} h\\
& =\left( 1-\overline{C} h\right)  \mathbb{E}\big[\|\mathscr{P}(Y_{n}) \|^{2}\big]  +   C_{A} h \\
& \leq \left( 1-\overline{C} h\right)^{n+1}  
\mathbb{E}\left[ \|x_{0} \|^{2} \right] 
+ \tfrac{C_{A}}{\overline{C}} \\
& \leq e^{-\overline{C}t_{n+1}} 
\mathbb{E}\left[\|x_{0} \|^{2} \right] 
+ \tfrac{C_{A}}{\overline{C}},
\end{aligned}
\end{equation} 
where $1-x \leq e^{-x}$ for any $x>0$.

\noindent\textbf{Case II: estimate of $\mathbb{E}\left[\|Y_{n+1} \|^{2p}\right]$ when $p\in (1,p_{0}) \cap \mathbb{N}$.}

Proceeding to the estimate of higher order moment of the LTPE method \eqref{introduction:LTPE-scheme}, some restrictions need to be imposed on the timestep $h$. Recalling $B = I + (1-\theta)Ah$,
with $h\in (0,1/[(1-\theta)\lambda_{d}])$, 
obviously, the matrix $B$ is positive definite and 
$\max_{i=1,\dots,d}\lambda_{B,i} = 1 - (1-\theta)\lambda_{1}h$.
By the Young inequality, we get, for some positive constant $\epsilon_{1}\in (0, (2p_{0}-2p)/(2p-1)]$,
\begin{equation} \label{equation:young-inequality-of-some-cross-terms}
\begin{aligned}
2  h 
\left\langle 
f\big(\mathscr{P}(Y_{n})\big) , 
g\big(\mathscr{P}(Y_{n})\big) \Delta W_{n} 
\right\rangle 
& \leq \tfrac{h^{2}}{\epsilon_{1}} 
\left\| f\big(\mathscr{P}(Y_{n})\big)\right\|^{2} 
+ \epsilon_{1} 
\left\|g\big(\mathscr{P}(Y_{n})\big) \Delta W_{n} \right\|^{2}.
\end{aligned}
\end{equation}
Plugging estimates \eqref{equation:young-inequality-of-some-cross-terms-first-part}, \eqref{equation:young-inequality-of-some-cross-terms}, together with Assumption \ref{assumption:one-side-Lipschitz-condition-for-linear-operator} and Lemma \ref{lemma:necessary-estimates}, into \eqref{equation:rhs} to show that
\begin{equation}
\begin{aligned}
&(1 + 2 \theta \lambda_{1} h ) (1+\|Y_{n+1} \|^{2}) \\
&\leq \left[1- 2 (1-\theta)\lambda_{1}h \right] \left(1 + \|\mathscr{P}(Y_{n}) \|^{2} \right)  
+ (1+\epsilon_{1})
\left\|
g\big(\mathscr{P}(Y_{n})\big) \Delta W_{n} 
\right\|^{2}  
+ 2  h 
\left\langle \mathscr{P}(Y_{n}) , f\big(\mathscr{P}(Y_{n})\big) 
\right\rangle \\
&\quad + 
2 
\left\langle 
B\mathscr{P}(Y_{n}) 
, 
g\big(\mathscr{P}(Y_{n})\big) \Delta W_{n} 
\right\rangle  
+ \left[
(2+\tfrac{1}{\epsilon_{1}} )  C_{f}^{2}  + (1-\theta)^{2} \lambda_{d}^{2}  + 2 \lambda_{1} 
\right]h\\
& =: \widetilde{L} 
\left(1 + \|\mathscr{P}(Y_{n}) \|^{2} \right) 
(1 + \Xi_{n+1}) 
+  C_{\epsilon_{1}}  h,
\end{aligned}
\end{equation}
where $\widetilde{L} := 1- 2 (1-\theta)\lambda_{1}h \geq 0$, $C_{\epsilon_{1}} = C(\lambda_{1}, \lambda_{d}, \theta, C_{f})  = (2+\tfrac{1}{\epsilon_{1}} )  C_{f}^{2}  + (1-\theta)^{2} \lambda_{d}^{2}  + 2 \lambda_{1}$, and
\begin{equation} \label{equation: xi}
\begin{aligned}
\Xi_{n+1} &=  \underbrace{\tfrac{(1+\epsilon_{1})\left\|g\left(\mathscr{P}(Y_{n})\right) \Delta W_{n} \right\|^{2}  }{\widetilde{L}  (1 + \|\mathscr{P}(Y_{n}) \|^{2})} }_{=:I_{1}} 
+ \underbrace{\tfrac{2  h \left\langle \mathscr{P}(Y_{n}) , f\left(\mathscr{P}(Y_{n})\right) \right\rangle }{\widetilde{L}  (1 + \|\mathscr{P}(Y_{n}) \|^{2})} }_{=:I_{2}}
+ \underbrace{\tfrac{ 2 \left\langle B\mathscr{P}(Y_{n}) , g\left(\mathscr{P}(Y_{n})\right) \Delta W_{n} \right\rangle }{\widetilde{L}  (1 + \|\mathscr{P}(Y_{n}) \|^{2})} }_{=:I_{3}} .\\
\end{aligned}
\end{equation}
Following the binomial expansion theorem and taking the conditional mathematical expectation with respect to $\mathcal{F}_{t_{n}}$ on both sides to show that,
\begin{equation} \label{equatoin:basic-expansion-of-p-th-moment}
\begin{aligned}
&(1 + 2 \theta \lambda_{1} h )^{p} 
\mathbb{E}
\left[
(1+\|Y_{n+1} \|^{2})^{p} \big| \mathcal{F}_{t_{n}} 
\right] \\
&\leq 
  \underbrace{ (1 + \|\mathscr{P}(Y_{n}) \|^{2})^{p}  \widetilde{L}^{p}\mathbb{E} \left[(1 + \Xi_{n+1}) ^{p} \big| \mathcal{F}_{t_{n}} \right]}_{=:\mathbb{I}_{1}} + \underbrace{ C_{\epsilon_{1}}h (1 + \|\mathscr{P}(Y_{n}) \|^{2})^{p-1} \sum_{i=0}^{p-1} \widetilde{L}^{i} \mathbb{E} \left[(1 + \Xi_{n+1}) ^{i} \big| \mathcal{F}_{t_{n}} \right] }_{=:\mathbb{I}_{2}}
\end{aligned}
\end{equation}
with $C_{\epsilon_{1}}=C(\lambda_{1}, \lambda_{d}, \theta, C_{f},p)$.
Hence, the analysis can be divided into the following two parts.

\noindent \textbf{For the estimate of $\mathbb{I}_{1}$:}

According to the binomial expansion theorem again, one has
\begin{equation} 
\begin{aligned}
\mathbb{E}\left[ (1 + \Xi_{n+1})^{p} \big| \mathcal{F}_{t_{n}} \right] &=
 \sum_{i=0}^{p}\mathscr{C}_{p}^{i} \mathbb{E}\left[\Xi_{n+1}^{i} \big| \mathcal{F}_{t_{n}} \right]  \\ 
 &= 1 + p \mathbb{E}\left[\Xi_{n+1} \big| \mathcal{F}_{t_{n}} \right] + \tfrac{p(p-1)}{2} \mathbb{E}\left[\Xi_{n+1}^{2} \big| \mathcal{F}_{t_{n}} \right] + \tfrac{p(p-1)(p-2)}{6}\mathbb{E}\left[\Xi_{n+1}^{3} \big| \mathcal{F}_{t_{n}} \right] + ... \ ,
\end{aligned}
\end{equation}
where $\mathscr{C}_{p}^{i}:= p!/(i! (p-i)!)$.
Let us decompose the estimate of $\mathbb{I}_{1}$ further into four steps.
\begin{description}
    \item[Step I: the estimate of]$\mathbb{E}\left[\Xi_{n+1} \big| \mathcal{F}_{t_{n}} \right]$.

Based on the property of Brownian motion and the fact that $\Delta W_{n}$ is independent of $\mathcal{F}_{t_{n}}$, we deduce
\begin{equation} \label{equation:property-of-Brownian-motion}
\begin{aligned}
\mathbb{E} 
\left[
\Delta W_{j,n} \big| \mathcal{F}_{t_{n}}  
\right]  =0, \quad 
\mathbb{E} 
\left[
|\Delta W_{j,n}|^{2} \big| \mathcal{F}_{t_{n}}  
\right]  =h, 
\quad j \in \{1, \ldots, m\},
\end{aligned}
\end{equation}
leading to 
\begin{equation} 
\begin{aligned}
\mathbb{E}\left[\Xi_{n+1} \big| \mathcal{F}_{t_{n}} \right] = \tfrac{(1+\epsilon_{1})h \left\|g\left(\mathscr{P}(Y_{n})\right) \right\|^{2} + 2  h \left\langle \mathscr{P}(Y_{n}) , f\left(\mathscr{P}(Y_{n})\right) \right\rangle  }{\widetilde{L} (1 + \|\mathscr{P}(Y_{n}) \|^{2})}.
\end{aligned}
\end{equation}
    \item[Step II: the estimate of]$\mathbb{E}\left[\Xi_{n+1}^{2} \big| \mathcal{F}_{t_{n}} \right]$.

Recalling some power properties of Brownian motions, we derive that, for any $\ell \in \mathbb{N}$,
\begin{equation} \label{equation:power-property-of-Brownian-motion}
\begin{aligned}
\mathbb{E} 
\left[
\left( \Delta W_{j,n}\right)^{2\ell-1}  
\big| \mathcal{F}_{t_{n}} 
\right] =0,\quad
\mathbb{E} 
\left[
\left( \Delta W_{j,n}\right)^{2\ell}  
\big| \mathcal{F}_{t_{n}} 
\right] =  (2 \ell -1)!! \ h^{\ell},  
\quad \forall \ n \in \mathbb{N},\ j \in \{1, \ldots, m\},
\end{aligned}
\end{equation}
where $(2 \ell -1)!! := \Pi_{i=1}^{\ell}(2\ell-1)$.
Before moving on, we here introduce a series of useful estimates. For any $\ell \in [2, \infty) \cap \mathbb{N}$
, by Lemma \ref{lemma:necessary-estimates} and \eqref{equation:power-property-of-Brownian-motion}, one can achieve with some constant $C = C(L_{1}, C_{f}, p)$,
\begin{equation} \label{equation:estimate-of-I1}
\begin{aligned}
\mathbb{E}
\left[
(I_{1})^{\ell} \big| \mathcal{F}_{t_{n}}  
\right] 
&= 
\tfrac{
(1 + \epsilon_{1})^{\ell} 
(2\ell-1)!! \ h^{\ell}
\left\|
g\left(\mathscr{P}(Y_{n})\right)
\right\|^{2\ell}
}
{
\widetilde{L}^{\ell} (1 + \|\mathscr{P}(Y_{n}) \|^{2})^{\ell}
} \\
&\leq 
\tfrac{
(1 + \epsilon_{1})^{\ell} (2\ell-1)!! \ 
h^{\ell} 
\left[
L_{1}^{\ell} 
(1 + \| \mathscr{P}(Y_{n})\|^{2})^{\ell} 
+  C     h^{-\frac{\ell}{2}} 
(1 + \| \mathscr{P}(Y_{n})\|^{2})^{\ell-1} 
\right]
}
{
(2p_{0} - 1)^{\ell} 
\widetilde{L}^{\ell} 
(1 + \|\mathscr{P}(Y_{n}) \|^{2})^{\ell}
} \\
& \leq 
\tfrac{
(1 + \epsilon_{1})^{\ell} 
L_{1}^{\ell} (2\ell-1)!!  \ h^{\ell}
}
{
(2p_{0} - 1)^{\ell} \widetilde{L}^{\ell}
} 
+ 
\tfrac{
C(1 + \epsilon)^{\ell} 
(2\ell-1)!!  h^{\frac{\ell}{2}}
}
{
\widetilde{L}^{\ell} 
(1 + \|\mathscr{P}(Y_{n}) \|^{2})
}.
\end{aligned}
\end{equation}
Similarly, with the Cauchy Schwarz inequality, one gets
\begin{equation} 
\begin{aligned}
\mathbb{E}
\left[ 
(I_{2})^{\ell} \big| \mathcal{F}_{t_{n}}  
\right] 
&= \tfrac{
2^{\ell} h^{\ell} 
\left( 
\left\langle 
\mathscr{P}(Y_{n}) , f\left(\mathscr{P}(Y_{n})\right)
\right\rangle 
\right)^{\ell} }
{
\widetilde{L}^{\ell} 
(1 + \|\mathscr{P}(Y_{n}) \|^{2})^{\ell}
} 
\leq 
\tfrac{
2^{\ell} h^{\ell}  
\|\mathscr{P}(Y_{n})\|^{\ell}
\|f\left(\mathscr{P}(Y_{n})\right) \|^{\ell}
}
{
\widetilde{L}^{\ell} 
(1 + \|\mathscr{P}(Y_{n}) \|^{2})^{\ell}
}.
\end{aligned}
\end{equation}
For any $\ell \geq 2$ and $x \geq 0$, we know that $x^{\frac{\ell}{2}} \leq (1 + x^{2})^{\ell-1}$. Therefore, with $C=C(C_{f}, \ell)$, we obtain
\begin{equation} \label{equation:estimate-of-I2}
\begin{aligned}
\mathbb{E}
\left[
(I_{2})^{\ell} \big| \mathcal{F}_{t_{n}}  
\right] 
\leq
\tfrac{
C h^{\frac{\ell}{2}}
}
{
\widetilde{L}^{\ell} 
(1 + \|\mathscr{P}(Y_{n}) \|^{2})
}.
\end{aligned}
\end{equation}
One needs to be careful about the estimate of term $I_{3}$. Equipping with \eqref{equation:property-of-Brownian-motion} yields
\begin{equation} \label{equation:estimate-of-I3-square}
\begin{aligned}
\mathbb{E}\left[ (I_{3})^{2} \big| \mathcal{F}_{t_{n}}  \right] &=
\tfrac{4h \left\| \left(B\mathscr{P}(Y_{n})\right)^{T} g\left(\mathscr{P}(Y_{n})\right) \right\|^{2} }{\widetilde{L}^{2} (1 + \|\mathscr{P}(Y_{n}) \|^{2})^{2} } 
\leq \tfrac{4h\left[1-(1-\theta)\lambda_{1}h \right]^{2}}{1-2(1-\theta)\lambda_{1}h} \tfrac{\left\|  g\left(\mathscr{P}(Y_{n})\right) \right\|^{2}}{\widetilde{L} (1 + \|\mathscr{P}(Y_{n}) \|^{2})} .
\end{aligned}
\end{equation}
It is time to move on to the estimate of $\mathbb{E}\left[\Xi_{n+1}^{2} \big| \mathcal{F}_{t_{n}} \right]$. We begin with the following expansion
\begin{equation} \label{equation: expansion xi 2}
\begin{aligned}
\mathbb{E}\left[\Xi_{n+1}^{2} \big| \mathcal{F}_{t_{n}} \right] &= \mathbb{E}\left[ (I_{1}+I_{2}+I_{3})^{2} \big| \mathcal{F}_{t_{n}}  \right]  \\
 & = \mathbb{E}\left[ (I_{1})^{2} + (I_{2})^{2} + (I_{3})^{2} + 2I_{1}I_{2} + 2 I_{1}I_{3} + 2I_{2}I_{3}\big| \mathcal{F}_{t_{n}}  \right].
\end{aligned}
\end{equation}
As claimed before, one will observe
\begin{equation} 
\begin{aligned}
\mathbb{E}\left[ I_{1}I_{3} \big| \mathcal{F}_{t_{n}}  \right] = \mathbb{E}\left[ I_{2}I_{3} \big| \mathcal{F}_{t_{n}}  \right] = 0,
\end{aligned}
\end{equation}
and, for $C=C(L_{1}, C_{f})$, 
\begin{equation} 
\begin{aligned}
\mathbb{E}\left[ I_{1}I_{2} \big| \mathcal{F}_{t_{n}}  \right] \leq 
\tfrac{ C  h} {\widetilde{L}^{2} (1 + \|\mathscr{P}(Y_{n}) \|^{2})}
\end{aligned}
\end{equation}
from \eqref{equation:estimate-of-I1}, \eqref{equation:estimate-of-I2} and the H\"older inequality. Plugging these with \eqref{equation:estimate-of-I1}, \eqref{equation:estimate-of-I2} and \eqref{equation:estimate-of-I3-square} into \eqref{equation: expansion xi 2} to show that
\begin{equation} \label{equation: estimate xi 2}
\begin{aligned}
\mathbb{E}\left[\Xi_{n+1}^{2} \big| \mathcal{F}_{t_{n}} \right] &\leq
 \tfrac{3 (1 + \epsilon_{1})^{2} L_{1}^{2}  h^{2}}{(2p_{0}-1)^{2}\widetilde{L}^{2}} +
 \tfrac{4h\left[1-(1-\theta)\lambda_{1}h \right]^{2}}{1-2(1-\theta)\lambda_{1}h} \tfrac{\left\|  g(\mathscr{P}(Y_{n})) \right\|^{2}}{\widetilde{L} (1 + \|\mathscr{P}(Y_{n}) \|^{2})} 
 +\tfrac{C h} {\widetilde{L}^{2} (1 + \|\mathscr{P}(Y_{n}) \|^{2})}, \\
\end{aligned}
\end{equation}
where $C = C(L_{1}, C_{f})$.
As we know, for any positive constant $\ell \in [2,p] \cap \mathbb{N}$, $p<p_{0}$ and 
$\epsilon_{1}\in (0, (2p_{0}-2p)/(2p-1)]$,
\begin{equation}\label{equation:inequality-for-ell-and-p}
(2\ell-1)!! (1 + \epsilon_{1})^{\ell} 
< (2p-1)^{\ell}(1 + \epsilon_{1})^{\ell} 
< (2p_{0}-1)^{\ell},
\end{equation}
so that we obtain
\begin{equation} \label{equation: second estimate xi 2}
\begin{aligned}
\mathbb{E}\left[\Xi_{n+1}^{2} \big| \mathcal{F}_{t_{n}} \right] &\leq
 \tfrac{ L_{1}^{2}  h^{2}}{\widetilde{L}^{2}} +
 \tfrac{4h\left[1-(1-\theta)\lambda_{1}h \right]^{2}}{1-2(1-\theta)\lambda_{1}h} \tfrac{\left\|  g(\mathscr{P}(Y_{n})) \right\|^{2}}{\widetilde{L} (1 + \|\mathscr{P}(Y_{n}) \|^{2})} 
 +\tfrac{C h} {\widetilde{L}^{2} (1 + \|\mathscr{P}(Y_{n}) \|^{2})}. \\
\end{aligned}
\end{equation}
    \item[Step III: the estimate of]$\mathbb{E}\left[\Xi_{n+1}^{3} \big| \mathcal{F}_{t_{n}} \right]$.

By the similar procedure, we can acquire that
\begin{equation} 
\begin{aligned}
\mathbb{E}\left[\Xi_{n+1}^{3} \big| \mathcal{F}_{t_{n}} \right] &= \mathbb{E}\left[ (I_{1}+I_{2}+I_{3})^{3} \big| \mathcal{F}_{t_{n}}  \right] \\
\end{aligned}
\end{equation}
where \eqref{equation:property-of-Brownian-motion} and \eqref{equation:power-property-of-Brownian-motion} are   used to imply that
\begin{equation} 
\begin{aligned}
\mathbb{E}
\left[
(I_{3})^{3} \big| \mathcal{F}_{t_{n}}  
\right] 
&= \mathbb{E}
\left[
(I_{1})^{2}I_{3} \big| \mathcal{F}_{t_{n}}  
\right] 
= 
\mathbb{E}
\left[ 
(I_{2})^{2}I_{3} \big| \mathcal{F}_{t_{n}}  
\right] 
=
\mathbb{E}
\left[
I_{1}I_{2}I_{3} \big| \mathcal{F}_{t_{n}}  
\right] =
0. \\
\end{aligned}
\end{equation}
Obeying \eqref{equation:estimate-of-I1}-\eqref{equation:estimate-of-I3-square},\eqref{equation:inequality-for-ell-and-p}  yields with $C=C(L_{1}, C_{f})$,
\begin{equation} 
\begin{aligned}
\mathbb{E}
\left[
\Xi_{n+1}^{3} \big| \mathcal{F}_{t_{n}} 
\right] 
&=
\mathbb{E}
\left[
(I_{1})^{3} \big| \mathcal{F}_{t_{n}}  
\right] 
+
\mathbb{E}
\left[
(I_{2})^{3} \big| \mathcal{F}_{t_{n}}  
\right] 
+ 3\mathbb{E}
\left[
(I_{1})^{2} \cdot I_{2} \big| \mathcal{F}_{t_{n}}  
\right] 
+ 3\mathbb{E}
\left[
I_{1} \cdot (I_{2})^{2} \big| \mathcal{F}_{t_{n}}  
\right] \\
& \quad + 3\mathbb{E}
\left[
I_{1} \cdot (I_{3})^{2} \big| \mathcal{F}_{t_{n}}  
\right] 
+ 3\mathbb{E}
\left[
I_{2} \cdot (I_{3})^{2} \big| \mathcal{F}_{t_{n}}  
\right] 
\\
&\leq  
\tfrac{
15 (1 + \epsilon_{1})^{3} L_{1}^{3}    h^{3}
}
{
(2p_{0}-1)^{3}\widetilde{L}^{3}
} 
+\tfrac{C h} 
{
\widetilde{L}^{3} (1 + \|\mathscr{P}(Y_{n}) \|^{2})
}\\ 
& \leq 
\tfrac{
L_{1}^{3}    h^{3}
}
{
\widetilde{L}^{3}
} 
+\tfrac{C h} 
{
\widetilde{L}^{3} 
(1 + \|\mathscr{P}(Y_{n}) \|^{2})
}.
\end{aligned}
\end{equation}
    \item[Step IV: the estimate of]$\mathbb{E}\left[\Xi_{n+1}^{\ell} \big| \mathcal{F}_{t_{n}} \right]$, $ \ell \in [4,p] \cap \mathbb{N}$.

It follows from Lemma \ref{lemma:necessary-estimates} that, for $ \ell \in [4,p] \cap \mathbb{N}$,
\begin{equation} \label{equation:estimate-of-I3-p}
\begin{aligned}
\mathbb{E}\left[
(I_{3})^{\ell} \big| \mathcal{F}_{t_{n}}  
\right] 
& \leq
\tfrac{
2^{\ell} h^{\frac{\ell}{2}} 
\left\|
\mathscr{P}(Y_{n})^{T} g\left(\mathscr{P}(Y_{n})\right) 
\right\|^{\ell} 
}{
\widetilde{L}^{\ell} (1 + \|\mathscr{P}(Y_{n}) \|^{2})^{\ell} 
}  \\
& \leq 
\tfrac{
2^{\ell} h^{\frac{\ell}{2}} 
\| \mathscr{P}(Y_{n})  \|^{\ell} 
\left[
L_{1}^{\frac{\ell}{2}} 
(1 + \| \mathscr{P}(Y_{n})\|^{2})^{\frac{\ell}{2}} 
+ C  h^{-\frac{\ell}{4}} 
(1 + \| \mathscr{P}(Y_{n})\|^{2})^{\frac{\ell}{2}-1} 
\right] 
}{
\widetilde{L}^{\ell} 
(
1 + \|\mathscr{P}(Y_{n}) \|^{2}
)^{\ell} 
} \\
&  = \tfrac{
2^{\ell} h^{\frac{\ell}{2}} 
\left[
L_{1}^{\frac{\ell}{2}} 
(1 + \| \mathscr{P}(Y_{n})\|^{2})^{\frac{\ell}{2}} 
\| \mathscr{P}(Y_{n})  \|^{\ell-2} 
\| \mathscr{P}(Y_{n})  \|^{2} 
+ C h^{-\frac{\ell}{4}} 
(
1 + \| \mathscr{P}(Y_{n})\|^{2}
)^{\frac{\ell}{2}-1} 
\| \mathscr{P}(Y_{n}) \|^{\ell}
\right] 
}{
\widetilde{L}^{\ell} (1 + \|\mathscr{P}(Y_{n}) \|^{2})^{\ell} 
}.
\end{aligned}
\end{equation}
Bearing the fact from Lemma \ref{lemma:necessary-estimates} that $\| \mathscr{P}(Y_{n})  \|^{2} \leq h^{-\frac{1}{\gamma}}$ in mind, we deduce, for some constant $C=C(L_{1}, C_{f}, \ell)$,
\begin{equation} 
\begin{aligned}
\mathbb{E}
\left[
(I_{3})^{\ell} \big| \mathcal{F}_{t_{n}}  
\right] 
\leq 
\tfrac{
Ch^{\frac{\ell}{2} 
- \frac{1}{\gamma}} + C h^{\frac{\ell}{4}}
}{
\widetilde{L}^{\ell} (1 + \|\mathscr{P}(Y_{n}) \|^{2})
} 
\leq 
\tfrac{ C h^{\frac{\ell}{4}}}{
\widetilde{L}^{\ell} (1 + \|\mathscr{P}(Y_{n}) \|^{2})
},
\end{aligned}
\end{equation}
Using the Young inequality yields, for some positive constants $\epsilon_{\ell} \in (0, (2\ell-1)^{\ell}/(2\ell-1)!!]$, $\ell \in [4,p]\cap \mathbb{N}$,
\begin{equation} 
\begin{aligned}
\mathbb{E}
\left[
\Xi_{n+1}^{\ell} \big| \mathcal{F}_{t_{n}} 
\right] 
= \mathbb{E}
\left[ 
(I_{1}+I_{2}+I_{3})^{\ell} \big| \mathcal{F}_{t_{n}}  
\right] 
\leq (1+\epsilon_{\ell}) 
\mathbb{E}
\left[
|I_{1}|^{\ell} \big| \mathcal{F}_{t_{n}} 
\right] 
+ (1+\tfrac{1}{\epsilon_{\ell}})
\mathbb{E}
\left[
|I_{2}+I_{3}|^{\ell} \big| \mathcal{F}_{t_{n}} 
\right].
\end{aligned}
\end{equation}
In light of the estimates \eqref{equation:estimate-of-I1}-\eqref{equation:estimate-of-I2} and \eqref{equation:estimate-of-I3-p} with the elementary inequality, we obtain that, for some constant $C_{\epsilon_{\ell}}=C(L_{1}, C_{f}, \ell)$,
\begin{equation} \label{equation:first-estimate-for-xi-when-l-is-bigger-than-3}
\begin{aligned}
\mathbb{E}\left[
\Xi_{n+1}^{\ell} \big| \mathcal{F}_{t_{n}} 
\right] 
\leq  (1+\epsilon_{\ell})
\tfrac{
(2\ell-1)!! \ (1 + \epsilon_{1})^{\ell} L_{1}^{\ell}  
h^{\ell}
}{
(2p_{0} - 1)^{\ell}\widetilde{L}^{\ell}
} 
+\tfrac{C_{\epsilon_{\ell}} h} {
\widetilde{L}^{\ell} (1 + \|\mathscr{P}(Y_{n}) \|^{2})
}.
\end{aligned}
\end{equation}
We would like to mention that the following inequality holds for any $\ell \in [4,p]\cap \mathbb{N}$ and $\epsilon_{1}\in (0, (2p_{0}-2p)/(2p-1)]$,
\begin{equation} 
\begin{aligned}
(1+\epsilon_{\ell})(2\ell-1)!! 
(1+\epsilon_{1})^{\ell} 
\leq (2\ell-1)^{\ell} (1+\epsilon_{1})^{\ell} 
\leq (2p_{0}-1)^{\ell} . 
\end{aligned}
\end{equation}


Therefore, the estimate \eqref{equation:first-estimate-for-xi-when-l-is-bigger-than-3} can be rewritten as 
\begin{equation} \label{equation:second-estimate-for-xi-when-l-is-bigger-than-3}
\begin{aligned}
\mathbb{E}
\left[
\Xi_{n+1}^{\ell} \big| \mathcal{F}_{t_{n}} 
\right] 
\leq  
\tfrac{ L_{1}^{\ell}   h^{\ell}}{\widetilde{L}^{\ell}} 
+\tfrac{C_{\epsilon_{\ell}} h} 
{\widetilde{L}^{\ell} (1 + \|\mathscr{P}(Y_{n}) \|^{2})}.
\end{aligned}
\end{equation}
\end{description}
Combining \textbf{Step I}$\sim $\textbf{Step IV} to show that, for some constant $C_{\epsilon_{\ell}}=C(L_{1}, C_{f}, p)$,
\begin{equation} 
\begin{aligned}
&\mathbb{E}\left[ (1 + \Xi_{n+1})^{p} \big| \mathcal{F}_{t_{n}} \right]\\ 
&\leq 1 
+ 
ph \tfrac{
2  \left\langle 
\mathscr{P}(Y_{n}) ,
f\left(\mathscr{P}(Y_{n})\right) 
\right\rangle 
+ \left[
(2p-2) \tfrac{
\left[1-(1-\theta)\lambda_{1}h \right]^{2}
}{
1-2(1-\theta)\lambda_{1}h
} +1+ \epsilon_{1} 
\right] 
\left\|  g\left(\mathscr{P}(Y_{n})\right) \right\|^{2} 
}{
\widetilde{L} (1 + \|\mathscr{P}(Y_{n}) \|^{2})
} 
+ \sum_{\ell=2}^{p} 
\mathscr{C}_{p}^{\ell} \ 
\tfrac{L_{1}^{\ell} h^{\ell}}{\widetilde{L}^{\ell}} 
+ \tfrac{C_{\epsilon_{\ell}} h}{
\widetilde{L}^{p} (1 + \|\mathscr{P}(Y_{n}) \|^{2})
}.  \\
\end{aligned}
\end{equation}
Moreover, we can choose a appropriate $h$ such that 
\begin{equation}
h\in \left(
0, \tfrac{p_{0}-p}{(1-\theta)\lambda_{1}(2p_{0}-p-1)}
\right)
\end{equation}
to make sure
\begin{equation}
2p_{0}-1 >  (2p-2) 
\tfrac{1-(1-\theta)\lambda_{1}h }
{1-2(1-\theta)\lambda_{1}h}
+1 
\geq (2p-2) 
\tfrac{
\left[1-(1-\theta)\lambda_{1}h \right]^{2}
}{1-2(1-\theta)\lambda_{1}h} +1,
\end{equation}
which leads to the following estimate by Assumption \ref{assumption:coercivity-condition-for-drift-and-diffusion},
\begin{equation} 
\begin{aligned}
\widetilde{L}^{p}
\mathbb{E}\left[
(1 + \Xi_{n+1})^{p} \big| \mathcal{F}_{t_{n}} 
\right] 
\leq 1+ \sum_{\ell=1}^{p} 
\mathscr{C}_{p}^{\ell}  
\widetilde{L}^{p-\ell}   L_{1}^{\ell}  h^{\ell} 
+ \tfrac{ C_{\epsilon_{\ell}}h}{ 1 + \|\mathscr{P}(Y_{n}) \|^{2}} 
= \left(
\widetilde{L} + L_{1}h 
\right)^{p} 
+\tfrac{
C_{\epsilon_{\ell}}h
}{
1 + \|\mathscr{P}(Y_{n}) \|^{2}
}.
\end{aligned}
\end{equation}
Hence, we deduce that
\begin{equation} \label{eq:estimate-of-the-first-part-I1}
\begin{aligned}
\mathbb{I}_{1} 
\leq 
\left(
\widetilde{L} + L_{1}h 
\right)^{p} 
(
1 + \|\mathscr{P}(Y_{n}) \|^{2}
)^{p} 
+ C_{\epsilon_{\ell}}h (1 + \|\mathscr{P}(Y_{n}) \|^{2})^{p-1}.
\end{aligned}
\end{equation}
\noindent \textbf{For the estimate of $\mathbb{I}_{2}$:}

For the estimate of $\mathbb{I}_{2}$, the key point is to get the estimate of $\widetilde{L}^{i} \mathbb{E} \left[(1 + \Xi_{n+1}) ^{i} \big| \mathcal{F}_{t_{n}} \right]$, $i \in (1,p)\in \mathbb{N}$, which is uniform bounded with the same analysis as the estimate of $\mathbb{I}_{1}$, i.e., there exists some  positive constant $C=C(L_{1}, C_{f}, p)$ such that,
\begin{equation}
\sum_{i=0}^{p-1}\widetilde{L}^{i} 
\mathbb{E} 
\left[
(1 + \Xi_{n+1}) ^{i} \big| \mathcal{F}_{t_{n}} 
\right] 
\leq C,
\end{equation}
leading to
\begin{equation} \label{eq:estimate-of-the-first-part-I2}
\begin{aligned}
\mathbb{I}_{2} 
\leq C_{\epsilon_{1}}h 
(1 + \|\mathscr{P}(Y_{n}) \|^{2})^{p-1}.
\end{aligned}
\end{equation}

\noindent \textbf{Combining the estimates of $\mathbb{I}_{1}$ and $\mathbb{I}_{2}$:}

Taking the estimates of $\mathbb{I}_{1}$ and $\mathbb{I}_{2}$ into \eqref{equatoin:basic-expansion-of-p-th-moment}, for some constant $C_{\epsilon_{1}, \epsilon_{\ell}}=C(\lambda_{1}, \lambda_{d}, \theta, L_{1}, C_{f},p)$, we recall $\widetilde{L}=1-2(1-\theta)\lambda_{1}h$ to show
\begin{equation} \label{equation:final-conditional-expectations-version}
\begin{aligned}
(1 + 2 \theta \lambda_{1} h )^{p} 
&\mathbb{E}
\left[ 
(1+\|Y_{n+1} \|^{2})^{p} \big| \mathcal{F}_{t_{n}} 
\right] \\
&\leq \left[
1-2(1-\theta)\lambda_{1}h + L_{1}h 
\right]^{p} 
(
1 + \|\mathscr{P}(Y_{n}) \|^{2}
)^{p} 
+  C_{\epsilon_{1}, \epsilon_{\ell}} h 
(
1 + \|\mathscr{P}(Y_{n}) \|^{2}
)^{p-1}.   \\
\end{aligned}
\end{equation}
For $2\lambda_{1} > L_{1}$, we take expectations on both sides of \eqref{equation:final-conditional-expectations-version} with Lemma \ref{lemma:necessary-estimates} and the Young inequality to show that, for some $\epsilon_{2} > 0$,
\begin{equation}
\begin{aligned}
\mathbb{E}
\left[
(1+\|Y_{n+1} \|^{2})^{p}  
\right]
&\leq 
\left(
\tfrac{ 1-2(1-\theta)\lambda_{1}h +  L_{1}h }
{1 + 2\theta \lambda_{1} h}
\right) ^{p} 
\mathbb{E}
\left[
(1 + \|\mathscr{P}(Y_{n}) \|^{2})^{p} 
\right]  
+  C_{\epsilon_{1}, \epsilon_{\ell}} h  
\mathbb{E}
\left[
(1 + \|\mathscr{P}(Y_{n}) \|^{2})^{p-1} 
\right]\\
& = \big(
1- \tfrac{2\lambda_{1} - L_{1}}{1+2\theta \lambda_{1}h} h 
\big) 
\mathbb{E}\left[
(1 + \|\mathscr{P}(Y_{n}) \|^{2})^{p} 
\right] 
+ C_{\epsilon_{1}, \epsilon_{\ell}} h  
\mathbb{E}\left[
(1 + \|\mathscr{P}(Y_{n}) \|^{2})^{p-1} 
\right] \\
& \leq  
\left(
1-\tfrac{2\lambda_{1} - L_{1}}{1+2\theta \lambda_{1}h}h 
+  \tfrac{p-1}{p}\epsilon_{2} C_{\epsilon_{1}, \epsilon_{\ell}} h 
\right) 
\mathbb{E}\left[
(1 + \|Y_{n} \|^{2})^{p} 
\right] 
+ \tfrac{\epsilon_{2}^{1-p}}{p}  
C_{\epsilon_{1}, \epsilon_{\ell}} h  .
\end{aligned}
\end{equation}
Then we can choose a suitable $\epsilon_{2}$ to ensure that 
\begin{equation}
\widetilde{C} 
:= \tfrac{2\lambda_{1} - L_{1}}{1+2\theta \lambda_{1}h}
- \tfrac{p-1}{p}\epsilon_{2} 
C_{\epsilon_{1}, \epsilon_{\ell}} >0.
\end{equation}
Therefore, for some constant $C_{\epsilon_{1}, \epsilon_{2}, \epsilon_{\ell}}=C(\lambda_{1}, \lambda_{d}, \theta, L_{1}, C_{f},p)$, we get
\begin{equation}
\begin{aligned}
\mathbb{E}
\left[
(1+\|Y_{n+1} \|^{2})^{p}  
\right] 
&\leq 
\left( 1-\widetilde{C}h \right) 
\mathbb{E}
\left[ 
(1 + \|Y_{n} \|^{2})^{p} 
\right] 
+ C_{\epsilon_{1}, \epsilon_{2}, \epsilon_{\ell}} h \\
&= \left(
1-\widetilde{C}h 
\right)^{n+1} 
\mathbb{E}
\left[
(1 + \|x_{0} \|^{2})^{p} 
\right] 
+ \sum_{i=0}^{n}
\left(
1-\widetilde{C}h 
\right)^{i} 
C_{\epsilon_{1}, \epsilon_{2}, \epsilon_{\ell}} h \\
& \leq e^{-\widetilde{C} t_{n+1}}
\mathbb{E}
\left[ 
(1 + \|x_{0} \|^{2})^{p} 
\right] 
+ \tfrac{C_{\epsilon_{1}, \epsilon_{2}, \epsilon_{\ell}}}{\widetilde{C}},
\end{aligned}
\end{equation}
where we have used the fact that for any $x>0$, $1-x \leq e^{-x}$.
The proof is completed.
\end{proof}
We remark that to verify the existence and uniqueness of the invariant measure of the LTPE method \eqref{introduction:LTPE-scheme}, the uniform estimate of the second order moment (i.e. \eqref{equation:second-order-moment-estimate-of-LTPE-method} in Lemma \ref{lemma:uniform-moments-bound-of-the-LTPE-method}) is enough. The estimate of the  $2p-$th  order moment (i.e. \eqref{equation:2p-thmoment-estimate-of-LTPE-method} in Lemma \ref{lemma:uniform-moments-bound-of-the-LTPE-method}) of the LTPE method \eqref{introduction:LTPE-scheme} is essential to the error analysis that follows.

The contractivity of the LTPE method \eqref{introduction:LTPE-scheme} follows directly from Lemma \ref{lemma:uniform-moments-bound-of-the-LTPE-method} and Lemma \ref{lemma:necessary-estimates}.
\begin{lemma} \label{lemma:contractivity-of-the-theta-linear-projected-Euler-method}
(Contractivity of the theta-linear-projected Euler method.)
Consider the following 
pair of solutions of LTPE method \eqref{introduction:LTPE-scheme} 
 with a parameter $\theta \in [0 , 1]$ driven by the same Brownian motion:
\begin{equation}
\begin{aligned}
Y_{n+1}^{(1)} - \theta A Y_{n+1}^{(1)} h 
&= \mathscr{P}(Y^{(1)}_{n}) 
+ (1- \theta) A \mathscr{P}(Y^{(1)}_{n}) h 
+ f\big(\mathscr{P}(Y^{(1)}_{n})\big) h 
+  g\big(\mathscr{P}(Y^{(1)}_{n})\big) \Delta W_{n}, \
Y_{0}^{(1)} = x_{0}^{(1)};  \\
Y_{n+1}^{(2)} - \theta A Y^{(2)}_{n+1} h
&= \mathscr{P}(Y^{(2)}_{n}) 
+ (1-\theta)A \mathscr{P}(Y^{(2)}_{n}) h 
+ f\big(\mathscr{P}(Y^{(2)}_{n})\big) h 
+  g\big(\mathscr{P}(Y^{(2)}_{n})\big) \Delta W_{n}, 
\ Y_{0}^{(2)} = x_{0}^{(2)},
\end{aligned}
\end{equation}
where 
$h$ is the uniform timestep with
\begin{equation}
h \in 
\left(
0, 
\min\left\{
\tfrac{\kappa^{2}(2\lambda_{1}-L_{2})}{(1- \theta)^{2}\lambda^{2}_{d}}, 
\tfrac{(1-\kappa)^{2\gamma}(2\lambda_{1}-L_{2})^{\gamma}}{(\lambda_{f})^{2\gamma}},
1 
\right\} \right), \quad \kappa\in (0,1).
\end{equation}
The constant $\lambda_{f}$ depends only on the drift $f$, denoted in Lemma \ref{lemma:necessary-estimates}. 
In addition,
let Assumptions \ref{assumption:one-side-Lipschitz-condition-for-linear-operator},  \ref{assumption:coupled-monotoncity-for-drift-and-diffusion} and \ref{assumption:growth-condition-of-frechet-derivatives-of-drift-and-diffusion} hold for $2\lambda_{1}>L_{2}$,
then there exists a positive constant $\widetilde{C}_{1}$ such that, for any $n \in\{0,1,2, \ldots, N\}$, $N\in \mathbb{N}$ and $t_{n}=nh$,
\begin{equation}
\mathbb{E} \left[  \| Y_n^{(1)} - Y_n^{(2)} \|^{2}\right] \leq e^{- \widetilde{C}_{1} t_{n}} \mathbb{E} \big[  \big\| x_0^{(1)} - x_0^{(2)} \big\|^{2} \big].
\end{equation}

\end{lemma}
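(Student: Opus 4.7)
The plan is to mirror the structure of the moment-bound proof in Lemma 4.3, but applied to the \emph{difference} $\Delta Y_n := Y_n^{(1)} - Y_n^{(2)}$ of two LTPE trajectories driven by the same Brownian path. First I would subtract the two copies of \eqref{introduction:LTPE-scheme} to obtain
\begin{equation*}
(I - \theta A h)\,\Delta Y_{n+1}
= (I+(1-\theta)Ah)\,\Delta\mathscr{P}_n
+ h\,\Delta f_n
+ \Delta g_n\,\Delta W_n,
\end{equation*}
with the shorthand $\Delta\mathscr{P}_n := \mathscr{P}(Y_n^{(1)})-\mathscr{P}(Y_n^{(2)})$, $\Delta f_n := f(\mathscr{P}(Y_n^{(1)}))-f(\mathscr{P}(Y_n^{(2)}))$, and similarly $\Delta g_n$. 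Squaring and using Assumption \ref{assumption:one-side-Lipschitz-condition-for-linear-operator} as in \eqref{equation:lhs} bounds the left-hand side from below by $(1+2\theta\lambda_1 h)\|\Delta Y_{n+1}\|^2$.

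Next I would expand the right-hand side and take $\mathbb{E}[\,\cdot\,|\mathcal{F}_{t_n}]$. The Brownian-linear cross terms vanish and $\mathbb{E}[\|\Delta g_n\Delta W_n\|^2|\mathcal{F}_{t_n}] = h\|\Delta g_n\|^2$. After grouping the crucial combination $2h\langle\Delta\mathscr{P}_n,\Delta f_n\rangle + h\|\Delta g_n\|^2$, Assumption \ref{assumption:coupled-monotoncity-for-drift-and-diffusion} (with $p_1>1$, discarding the non-negative slack $(2p_1-2)h\|\Delta g_n\|^2$) gives
\begin{equation*}
2h\langle\Delta\mathscr{P}_n,\Delta f_n\rangle + h\|\Delta g_n\|^2 \le h L_2 \|\Delta\mathscr{P}_n\|^2.
\end{equation*}
Together with $\langle\Delta\mathscr{P}_n,A\Delta\mathscr{P}_n\rangle\le -\lambda_1\|\Delta\mathscr{P}_n\|^2$ and $\|A\Delta\mathscr{P}_n\|\le\lambda_d\|\Delta\mathscr{P}_n\|$, this yields
\begin{equation*}
(1+2\theta\lambda_1 h)\,\mathbb{E}[\|\Delta Y_{n+1}\|^2|\mathcal{F}_{t_n}]
\le \bigl(1-2(1-\theta)\lambda_1 h + hL_2 + (1-\theta)^2 h^2\lambda_d^2\bigr)\|\Delta\mathscr{P}_n\|^2
+ h^2\|\Delta f_n\|^2 + 2(1-\theta)h^2\lambda_d\|\Delta\mathscr{P}_n\|\|\Delta f_n\|.
\end{equation*}

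The main obstacle lies in absorbing the residual ``projected-drift'' contributions, since the Lipschitz bound from Lemma \ref{lemma:necessary-estimates} only gives $\|\Delta f_n\|\le \lambda_f h^{-(\gamma-1)/(2\gamma)}\|\Delta Y_n\|$, so $h^2\|\Delta f_n\|^2$ scales like $h^{1+1/\gamma}\lambda_f^2\|\Delta Y_n\|^2$ and the cross term like $h^{(3\gamma+1)/(2\gamma)}\lambda_f\|\Delta Y_n\|^2$ (after Young's inequality). Here I would exploit the stepsize assumption: the condition $h<\kappa^2(2\lambda_1-L_2)/[(1-\theta)^2\lambda_d^2]$ controls the $(1-\theta)^2 h^2\lambda_d^2$ piece, while $h<[(1-\kappa)^2(2\lambda_1-L_2)/\lambda_f^2]^{\gamma}$ forces $h^{1/\gamma}\lambda_f^2\le(1-\kappa)^2(2\lambda_1-L_2)$, so that all stray terms together are dominated by $(2\lambda_1-L_2)h\|\Delta Y_n\|^2$. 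Combining with $\|\Delta\mathscr{P}_n\|\le\|\Delta Y_n\|$ from \eqref{eq:estimates-in-lemma-projected-Lipschitz}, the bracket becomes at most $1-(2\lambda_1-L_2-\text{slack})h$, producing
\begin{equation*}
\mathbb{E}[\|\Delta Y_{n+1}\|^2|\mathcal{F}_{t_n}] \le \frac{1 - \eta h}{1+2\theta\lambda_1 h}\|\Delta Y_n\|^2
\end{equation*}
for some $\eta>0$ depending on $\lambda_1,L_2,\kappa,\theta$.

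Finally I would take full expectation, iterate over $n$, and apply $1-x\le e^{-x}$ to obtain
\begin{equation*}
\mathbb{E}[\|\Delta Y_n\|^2] \le \exp(-\widetilde{C}_1 t_n)\,\mathbb{E}[\|x_0^{(1)}-x_0^{(2)}\|^2],
\end{equation*}
with $\widetilde{C}_1 := \ln\!\bigl((1+2\theta\lambda_1 h)/(1-\eta h)\bigr)/h > 0$, independent of $n$. The only delicate point is choosing $\kappa\in(0,1)$ so that the two $h$-dependent penalties in the stepsize restriction add to strictly less than the baseline contraction $(2\lambda_1-L_2)h$; the prescribed bounds in the lemma are engineered precisely for this split.
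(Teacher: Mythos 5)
Your proposal is correct and follows essentially the same route as the paper's proof in Appendix B.2: subtract the two schemes, bound the left side below by $(1+2\theta\lambda_1 h)\|\Delta Y_{n+1}\|^2$ via Assumption \ref{assumption:one-side-Lipschitz-condition-for-linear-operator}, let the expectation kill the Brownian cross terms, absorb $2h\langle\Delta\mathscr{P}_n,\Delta f_n\rangle + h\|\Delta g_n\|^2$ by the coupled monotonicity condition, control the residual drift terms through the $\lambda_f h^{-(\gamma-1)/(2\gamma)}$ Lipschitz bound of Lemma \ref{lemma:necessary-estimates} so that they collapse to $[(1-\theta)\lambda_d h^{1/2}+\lambda_f h^{1/(2\gamma)}]^2 h\|\Delta Y_n\|^2$, and use the $\kappa$-split stepsize restriction to keep a positive contraction gap before iterating with $1-x\le e^{-x}$. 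The only cosmetic differences (conditional versus full expectation, and your explicit logarithmic formula for $\widetilde C_1$ versus the paper's choice of $\widetilde C_1$ in an interval) do not change the argument.
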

The proof of Lemma \ref{lemma:contractivity-of-the-theta-linear-projected-Euler-method} is deferred to Appendix \ref{proof-of-lemma:contractivity-of-the-theta-linear-projected-Euler-method}.
\begin{proof} [Proof of Theorem \ref{theorem:invariant-measure-of-LTPE-method}]
With Lemma \ref{lemma:uniform-moments-bound-of-the-LTPE-method} in mind, the existence of the invariant measure $\widetilde{\pi}$ admitted by the LTPE scheme \eqref{introduction:LTPE-scheme} is obtained by Krylov-Bogoliubov theorem \cite{da2006introduction}.
Further, the proof of the uniqueness of such invariant measure $\widetilde{\pi}$ follows almost the same idea quoted from Theorem 7.9 in \cite{li2019explicit}, which is a consequence of Lemma \ref{lemma:contractivity-of-the-theta-linear-projected-Euler-method}, so that we omit it here.
Then, using Lemma \ref{lemma:contractivity-of-the-theta-linear-projected-Euler-method} and the Chapman-Kolmogorov equation yields, for $\varphi \in C^{1}_{b}(\mathbb{R}^{d})$, 
\begin{equation}
\begin{aligned}
\left|
\mathbb{E} 
\left[
\varphi\left(Y^{x_{0}}_{n}\right)
\right]
-\int_{\mathbb{R}^d} 
\varphi(x) \widetilde{\pi}(\dd x)
\right| 
&= 
\left|
\int_{\mathbb{R}^d} 
\mathbb{E} 
\left[
\varphi\left(Y^{x_{0}}_{n}\right)
\right]
\widetilde{\pi}(\dd x)
-
\int_{\mathbb{R}^d} 
\mathbb{E} 
\left[
\varphi\left(Y^{x}_{n}\right)
\right]
\widetilde{\pi}(\dd x)
\right| \\
&\leq 
\|\varphi \|_{1} 
\int_{\mathbb{R}^d}
\mathbb{E} 
\left[
\left\|
Y^{x_{0}}_{n} - Y^{x}_{n} 
\right\| 
\right]
\widetilde{\pi}(\dd x)
\\
&\leq 
C e^{-\frac{ \widetilde{C}_{1}}{2} t}
\left(
1
+\mathbb{E} \left[\|x_{0}\|^2 \right]
\right), 
\quad \forall t \in [0,T].
\end{aligned}
\end{equation}

\end{proof}
\section{Time-independent weak error analysis}\label{subsection: Time-independent weak error analysis}
Our aim is to estimate the error between the invariant measure $\pi$ and $\widetilde{\pi}$. i.e.
\begin{equation}
    \left|\int_{\mathbb{R}^d} \varphi(x) \pi(\dd x)-\int_{\mathbb{R}^d} \varphi(x) \widetilde{\pi}( \dd x)\right|.
\end{equation}
As we have claimed before, both $\{X_{t_{n}} \}_{n\in \mathbb{N}}$, defined by \eqref{eq:semi-linear-SODE}, and $\{Y_{n} \}_{n\in \mathbb{N}}$, defined by \eqref{introduction:LTPE-scheme}, are ergodic, namely
\begin{equation}
\lim_{N \rightarrow \infty} 
\tfrac{1}{N} 
\sum_{n=0}^{N-1} 
\mathbb{E}
\left[
\varphi \left(X_{t_{n}} \right) 
\right] = 
\int_{\mathbb{R}^d} \varphi(x) \pi(\dd x), 
\quad 
\lim_{N \rightarrow \infty}
\tfrac{1}{N} \sum_{n=0}^{N-1} 
\mathbb{E}
\left[
\varphi \left(Y_{k} \right) 
\right] 
= \int_{\mathbb{R}^d} \varphi(x) \widetilde{\pi}(\dd x), \quad a.s.
\end{equation}
Hence, the error estimate boils down to the time-independent weak convergence analysis of the LTPE method \eqref{introduction:LTPE-scheme} as follows,
\begin{equation} \label{equation:estimate-decomposition-of-two-measures}
\begin{aligned}
\left|
\int_{\mathbb{R}^d} \varphi(x) \pi(\dd x)
-\int_{\mathbb{R}^d} \varphi(x) \widetilde{\pi}( \dd x)
\right|  
&= 
\left|
\lim_{N \rightarrow \infty} 
\sum_{n=0}^{N-1} 
\Big(
\mathbb{E}\left[\varphi \left(X_{t_{n}} \right) \right] 
- \mathbb{E}\left[\varphi \left(Y_{n} \right) \right] 
\Big) 
\right|\\
& \leq 
\lim_{N \rightarrow \infty} 
\tfrac{1}{N} 
\sum_{n=0}^{N-1} 
\Big|
\mathbb{E}
\left[
\varphi \left(X_{t_{n}} \right) 
\right] 
-  
\mathbb{E}
\left[
\varphi \left(Y_{n} \right) 
\right] 
\Big|.
\end{aligned}
\end{equation}
In order to carry out the error analysis, we need some priori estimates and lemmas.
The key ingredient is to introduce $u: [0, T] \times \mathbb{R}^{d} \rightarrow \mathbb{R}$ defined by
\begin{equation} \label{equation:def-of-u}
u(t, x) 
:=  \mathbb{E}
\left[
\varphi\!\left(X^{x}_{t} \right) 
\right].
\end{equation}
where $\varphi \in C^{3}_{b}(\mathbb{R}^{d})$.
In what follows, we will show that
$u(\cdot,\cdot)$ is the unique solution of the associated Kolmogorov equations
as
\begin{equation} \label{equation:kolmogorov-equation}
\partial_{t} u(t,x) 
= Du(t,x) F(x) 
+ \tfrac{1}{2} 
\sum_{j=1}^{m} D^{2}u(t,x) 
\big(g_{j}(x) , g_{j}(x) \big),
\end{equation}
with initial condition $u(0,\cdot) = \varphi(\cdot)$,  where we denote that $F(x):= Ax + f(x)$. To examine the regularity of $u$, we need the following properties.



For the matrix $A \in \mathbb{R}^{d\times d}$, it is apparent that
\begin{equation}
D(Ax)   v_{1} = Av_{1}, 
\quad \forall x, v_{1} \in \mathbb{R}^{d},
\end{equation}
and for $i \in [2,\infty) \cap \mathbb{N}$,
\begin{equation}
D^{i}(Ax)   (
v_{1}, \cdots, v_{i}
) 
= 0, 
\quad \forall x, v_{1}, \cdots, v_{i} \in \mathbb{R}^{d}.
\end{equation}
Moreover, for convenience, we denote a mapping $\mathcal{P}_{\cdot}(\cdot, \cdot) : \mathbb{R}\times  \mathbb{R}^{d} \times  \mathbb{R}^{d} \rightarrow [1, \infty)$ as
\begin{equation}\label{eqn:Pgammaxx}
\mathcal{P}_{\bar{\gamma}}(x, \tilde{x} ) 
:= \max\left\{
1, (1+\|x\|+\|\tilde{x}\|)^{\bar{\gamma}} 
\right\}, 
\quad  
\forall \bar{\gamma} \in \mathbb{R}, \ 
\forall x, \tilde{x} \in \mathbb{R}^{d}.
\end{equation}
In particular, let $\mathcal{P}_{\cdot}(\cdot):  \mathbb{R} \times  \mathbb{R}^{d} \rightarrow [1, \infty) $ be defined as
\begin{equation}\label{eqn:Pgammax}
\mathcal{P}_{\bar{\gamma}}(x) 
:= \mathcal{P}_{\bar{\gamma}}(x, 0) 
= \max\left\{
1, (1+\|x\|)^{\bar{\gamma}} 
\right\}, 
\quad  \forall \bar{\gamma} \in \mathbb{R}, \ 
\forall x \in \mathbb{R}^{d}.
\end{equation}
Obviously, these mappings are non-decreasing with respect to $\bar{\gamma}$.
Hence, it follows from Assumption \ref{assumption:growth-condition-of-frechet-derivatives-of-drift-and-diffusion} and its consequences that, for $j\in \{1, \dots, m \}$,
\begin{equation}\label{eqn:D2FPestimate}
\begin{aligned}
\left\|
D^{2}F(x) ( v_{1}, v_{2})- D^{2}F(\tilde{x})( v_{1},  v_{2}) 
\right\| 
&\leq C
\mathcal{P}_{\gamma-3}(x, \tilde{x}) 
\cdot \|x-\tilde{x}\| \cdot \|v_{1} \| \cdot \|v_{2} \|,\quad \forall x, \tilde{x}, v_{1}, v_{2}\in \mathbb{R}^{d},\\
\left\|D^{2}F(x) ( v_{1}, v_{2}) \right\| &\leq C\mathcal{P}_{\gamma-2}(x)  \cdot \|v_{1} \| \cdot \|v_{2} \|,\quad \forall x, v_{1}, v_{2}\in \mathbb{R}^{d}, \\
\end{aligned}
\end{equation}
which directly implies
\begin{equation}\label{eq:growth-of-the-diff-drift-F}
\begin{aligned}
\left\|DF(x) v_{1}- DF(\tilde{x}) v_{1} \right\| &\leq C\mathcal{P}_{\gamma-2}(x, \tilde{x}) \cdot \|x-\tilde{x}\| \cdot \|v_{1} \|,\quad \forall x, \tilde{x}, v_{1}\in \mathbb{R}^{d}, \\
\left\|DF(x) v_{1}\right\| &\leq C_{A}\mathcal{P}_{\gamma-1}(x)  \cdot \|v_{1} \|, \quad \forall x,  v_{1}\in \mathbb{R}^{d},\\
\end{aligned}
\end{equation}
and
\begin{equation}\label{equation:growth-of-the-drift-F}
\begin{aligned}
\left\|F(x) - F(\tilde{x})  \right\| &\leq C_{A}(1+\|x\|+\|\tilde{x}\|)^{\gamma-1} \|x-\tilde{x}\| , \quad \forall x, \tilde{x} \in \mathbb{R}^{d},\\
\left\|F(x)  \right\| &\leq C_{A}(1+\|x\|)^{\gamma}, \quad \forall x \in \mathbb{R}^{d}  .\\
\end{aligned}
\end{equation}
Correspondingly, the following estimates hold true, for $j\in \{1,\dots,m \}$,
\begin{equation}\label{eqn:D2GPestimate}
\begin{aligned}
\left\|D^{2}g_{j}(x) (v_{1}, v_{2})- D^{2}g_{j}(\tilde{x}) (v_{1}, v_{2}) \right\|^{2} &\leq C\mathcal{P}_{\gamma-5}(x, \tilde{x})  \cdot \|x-\tilde{x}\|^{2} \cdot \|v_{1} \|^{2} \cdot \|v_{2} \|^{2},\quad \forall x, \tilde{x}, v_{1}, v_{2}\in \mathbb{R}^{d},\\
\left\|D^{2}g_{j}(x) (v_{1}, v_{2}) \right\|^{2} &\leq C\mathcal{P}_{\gamma-3}(x)  \cdot \|v_{1} \|^{2} \cdot \|v_{2} \|^{2}, \quad \forall x, v_{1}, v_{2}\in \mathbb{R}^{d},\\
\end{aligned}
\end{equation}
which also shows
\begin{equation}
\begin{aligned}
\left\|Dg_{j}(x) v_{1}- Dg_{j}(\tilde{x}) v_{1} \right\|^{2} &\leq C\mathcal{P}_{\gamma-3}(x, \tilde{x})  \cdot \|x-\tilde{x}\|^{2} \cdot \|v_{1} \|^{2}, \quad \forall x, \tilde{x}, v_{1}\in \mathbb{R}^{d}. \\
\end{aligned}
\end{equation}
Besides, 
Assumptions \ref{assumption:one-side-Lipschitz-condition-for-linear-operator}, \ref{assumption:coupled-monotoncity-for-drift-and-diffusion} lead to, for some $p_{1} \geq 1$ with $2\lambda_{1} > L_{2}$,
\begin{equation} \label{eq:enhanced-coupled-condition}
2\langle DF(x)y, y\rangle 
+ (2p_{1}-1) \sum^{m}_{j=1}
\|Dg_{j}(x)y \|^{2} 
\leq - \alpha \| y\|^{2}, 
\quad \forall x, y \in \mathbb{R}^{d},
\end{equation}
where $\alpha:= 2\lambda_{1}-L_{2}>0$.
For random functions,  let us
introduce the mean-square differentiability,
quoted from \cite{wang2021weak1},
as follows.
\begin{definition}(Mean-square differentiable)
Let $\Psi: \Omega \times \mathbb{R}^{d} \rightarrow \mathbb{R}$ and $\psi_{i}: \Omega \times \mathbb{R}^{d} \rightarrow \mathbb{R}$ be random functions satisfying
\begin{equation}
\lim _{\tau \rightarrow 0} 
\mathbb{E}
\left[
\left|
\tfrac{1}{\tau}
\left[
\Psi\left(x+\tau e_i\right)-\Psi(x)
\right]
-\psi_i(x)
\right|^2
\right]=0, 
\quad \forall i \in\{1,2, \cdots, d\},
\end{equation}
where $e_{i}$ is the  unit vector in $\mathbb{R}^{d}$ with the $i-$th element being  $1$.  Then $\Psi$ is called to be mean-square differentiable,  with $\psi = (\psi_{1}, \dots, \psi_{d})$  being the derivative (in the mean-square differentiable sense) of $\Psi$ at $x$.
Also denoting $\mathcal{D}_{(i)} \Psi = \psi_{i}$  and  $\mathcal{D} \Psi(x) =\psi $.
\end{definition}
The above definition can be generalized to vector-valued functions in a component-wise manner.
Now we are in the position to derive the uniform estimate of the 
derivatives of $\{X^{x_{0}}_{t}\}_{t\in[0,T]}$ of  \eqref{eq:semi-linear-SODE} in the mean-square differentiable sense.  Here for each $t$ we take the function $X^{\cdot}_t: \mathbb{R}^d\to \mathbb{R}^d$, and write its derivative as $\mathcal{D}X^{x}_{t}\in L(\mathbb{R}^d,\mathbb{R}^d)$. Higher order derivatives $\mathcal{D}^2X^{x}_{t}$ and $\mathcal{D}^3X^{x}_{t}$ can be defined similarly.


\begin{lemma} \label{lemma:differentiability-of-solutions}
Consider the SDE \eqref{eq:semi-linear-SODE} subject to Assumptions \ref{assumption:one-side-Lipschitz-condition-for-linear-operator}-\ref{assumption:growth-condition-of-frechet-derivatives-of-drift-and-diffusion} with $2\lambda_{1} > L_{2}$. Then the solution $\{X_{t}\}_{t\in [0,T]}$ of \eqref{eq:semi-linear-SODE} is three times mean-square differentiable.
Moreover, recall $p_1$ given in Assumption \ref{assumption:coupled-monotoncity-for-drift-and-diffusion}, for any $q_{1}\in [1,p_{1}]$, $q_{2} \in [1,q_{1})$, 
and some random variables
$v_{1} \in L^{2\max\{q_{1}, \rho_{2}q_{2} \}}(\Omega, \mathbb{R}^{d})$,
$v_{2} \in L^{2\max\{\rho_{3}q_{2}, \rho_{3}\rho_{5} \}}(\Omega, \mathbb{R}^{d})$,
$v_{3} \in L^{2\rho_{3} }(\Omega, \mathbb{R}^{d})$,
where $\rho_{1}, \rho_{2}, \rho_{3}, \rho_{4}, \rho_{5}, \rho_{6} >1$ satisfying $1/\rho_{1} + 1/\rho_{2} + 1/\rho_{3}=1$,
$1/\rho_{4} + 1/\rho_{5} + 1/\rho_{6}=1$
and 
$p_{0}$ in Assumption \ref{assumption:coercivity-condition-for-drift-and-diffusion} fulfilling
\begin{equation} \label{eq:range-of-p0-in-lemma:differentiability-of-solutions}
p_{0} \in \big[
\max\{\rho_{1}q_{2},  \rho_{2} \} \times (\gamma-2)
,
\infty
\big) \cap 
[1,\infty),
\end{equation}
such that
\begin{equation}
\left\| 
\mathcal{D}X^{x}_{t}  v_{1}
\right\|_{L^{2q_{1}}(\Omega, \mathbb{R}^{d})} 
\leq Ce^{-\alpha_{1}t}  
\left\|  
v_{1}
\right\|_{L^{2q_{1}}(\Omega, \mathbb{R}^{d})}, \notag
\end{equation}
\begin{equation}
\left\|
\mathcal{D}^{2}X^{x}_{t}   (v_{1}, v_{2})
\right\|_{L^{2q_{2}}(\Omega, \mathbb{R}^{d})}
\leq C e^{-\alpha_{2}t}
\sup_{r \in [0,T]}
\left\| 
\mathcal{P}_{\gamma-2}(X^{x}_r) 
\right\|_{L^{2\rho_{1} q_{2}  }(\Omega, \mathbb{R})} 
\left\| v_{1} \right\|_{L^{2\rho_{2} q_{2}} (\Omega, \mathbb{R}^{d})}
\left\| v_{2} \right\|_{L^{2\rho_{3} q_{2}} (\Omega, \mathbb{R}^{d})} , \notag
\end{equation}
\begin{equation}
\begin{aligned}
\left\|
\mathcal{D}^{3}X^{x}_{t}  (v_{1}, v_{2}, v_{3})
\right\|_{L^{2}(\Omega, \mathbb{R}^{d})} 
&\leq Ce^{-\alpha_{3}t} \sup_{r \in [0,T]}
\left\| 
\mathcal{P}_{\gamma-2}(X^{x}_r) 
\right\|_{L^{2\max\{ \rho_{1}, \rho_{3}\rho_{4} \} }(\Omega, \mathbb{R})} \times \\
&\hspace{10em}
\|v_{1} \|_{L^{2\rho_{2}} (\Omega, \mathbb{R}^{d})} 
\|v_{2} \|_{L^{2\rho_{3}\rho_{5}} (\Omega, \mathbb{R}^{d})} 
\|v_{3} \|_{L^{2\rho_{3}\rho_{6}} (\Omega, \mathbb{R}^{d})} ,
\end{aligned}
\end{equation}
where $\alpha_{1}=\alpha/2$, 
$\alpha_{2}=(q_{2}\alpha - \tilde{\epsilon}_{1})/q_{2}$, 
$\tilde{\epsilon}_{1}\in (0,q_{2}\alpha)$ and
$\alpha_{3}=\alpha-\tilde{\epsilon}_{4}$, $\tilde{\epsilon}_{4}\in (0,\alpha)$.

\end{lemma}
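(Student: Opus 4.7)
The plan is to handle each order $k\in\{1,2,3\}$ separately by first identifying the SDE satisfied by $\mathcal{D}^k X^x_t$: it is linear in itself with coefficients $DF(X_t)$ and $Dg_j(X_t)$, plus, for $k\ge 2$, an inhomogeneous forcing built from lower-order derivatives and from $D^iF$, $D^ig_j$ with $i\ge 2$ evaluated at $X_t$. Mean-square differentiability at each order follows by comparing difference quotients against the candidate derivative processes, whose existence and $L^{2q}$ moment bounds are guaranteed because $F$ and the $g_j$ are $C^3$ with polynomial growth of their derivatives by Assumption \ref{assumption:growth-condition-of-frechet-derivatives-of-drift-and-diffusion}. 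The exponential decay is extracted from an It\^o expansion of $\|\mathcal{D}^k X^x_t(v_1,\dots,v_k)\|^{2q}$, the crucial ingredient being the enhanced monotonicity \eqref{eq:enhanced-coupled-condition}.

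\textbf{First order.} The process $\eta_t:=\mathcal{D}X^x_t v_1$ satisfies $d\eta_t=DF(X_t)\eta_t\,dt+\sum_j Dg_j(X_t)\eta_t\,dW_{j,t}$. Applying It\^o to $\|\eta_t\|^{2q_1}$ and bounding the It\^o correction by Cauchy--Schwarz produces, at the drift level, the combination $2\langle DF(X_t)\eta_t,\eta_t\rangle+(2q_1-1)\sum_j\|Dg_j(X_t)\eta_t\|^2$. Since $q_1\le p_1$, \eqref{eq:enhanced-coupled-condition} bounds this by $-\alpha\|\eta_t\|^2$, giving a drift $-q_1\alpha\|\eta_t\|^{2q_1}$. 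Gr\"onwall then yields $\mathbb{E}\|\eta_t\|^{2q_1}\le e^{-q_1\alpha t}\mathbb{E}\|v_1\|^{2q_1}$, which is the claim after taking $(2q_1)$-th roots.

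\textbf{Second order.} For $\zeta_t:=\mathcal{D}^2 X^x_t(v_1,v_2)$ the same linear part appears, augmented by $D^2F(X_t)(\eta^1_t,\eta^2_t)$ in the drift and $D^2g_j(X_t)(\eta^1_t,\eta^2_t)$ in the diffusion, where $\eta^i_t:=\mathcal{D}X^x_t v_i$. Applying It\^o to $\|\zeta_t\|^{2q_2}$ with $q_2<q_1\le p_1$, I would split each squared diffusion term via Young with a small parameter chosen so that the coefficient of $\sum_j\|Dg_j(X_t)\zeta_t\|^2$ stays below $2p_1-1$; this preserves the $-q_2\alpha\|\zeta_t\|^{2q_2}$ drift via \eqref{eq:enhanced-coupled-condition}. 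A further Young step absorbs the inner product with $D^2F(X_t)(\eta^1,\eta^2)$ into an $\tilde\epsilon_1$-fraction of that drift and leaves a forcing dominated by $\|D^2F(X_t)(\eta^1,\eta^2)\|^{2q_2}+\sum_j\|D^2g_j(X_t)(\eta^1,\eta^2)\|^{2q_2}$. By \eqref{eqn:D2FPestimate} and \eqref{eqn:D2GPestimate} this is controlled by $\mathcal{P}_{\gamma-2}(X_t)^{2q_2}\|\eta^1_t\|^{2q_2}\|\eta^2_t\|^{2q_2}$, and a three-way H\"older split with exponents $(\rho_1,\rho_2,\rho_3)$ lands on the moments of $X^x_t$ (finite by Lemma \ref{lemma:uniform-moments-bound-of-SDEs} under the stated lower bound on $p_0$) and on the first-order bound just established. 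Gr\"onwall at the effective rate then delivers $\alpha_2$.

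\textbf{Third order.} The SDE for $\xi_t:=\mathcal{D}^3 X^x_t(v_1,v_2,v_3)$ is linear in $\xi_t$ with the same coefficients but carries three mixed $D^2F(X_t)(\eta^i,\zeta^{jk})$-type terms, a pure $D^3F(X_t)(\eta^1,\eta^2,\eta^3)$ term, and their $g_j$ counterparts. Running It\^o on $\|\xi_t\|^2$ produces the $-\alpha\|\xi_t\|^2$ drift from \eqref{eq:enhanced-coupled-condition} at $p_1\ge 1$; each inhomogeneous contribution is absorbed into an $\tilde\epsilon_4$-fraction of that drift, the residues bounded via Assumption \ref{assumption:growth-condition-of-frechet-derivatives-of-drift-and-diffusion} in its $\mathcal{P}$-packaged form. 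The two H\"older triples enter here: $(\rho_1,\rho_2,\rho_3)$ separates the $\mathcal{P}_{\gamma-2}(X^x_t)$ factor from the $(\eta^i,\zeta^{jk})$ pairing, and $(\rho_4,\rho_5,\rho_6)$ redistributes the triple product $\eta^1\otimes\eta^2\otimes\eta^3$ arising from the $D^3$ terms. Precisely these splits dictate the lower bound \eqref{eq:range-of-p0-in-lemma:differentiability-of-solutions} on $p_0$, since every $L^p$ moment of $X^x_t$ that appears must be finite. Bootstrapping the first- and second-order exponential decays into the forcing makes the Gr\"onwall integrand beat the kernel $e^{-(\alpha-\tilde\epsilon_4)t}$, which yields the $e^{-\alpha_3 t}$ claim. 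The main obstacle is precisely the orchestration of these H\"older splits in the third-order case so that every appearing moment is at a level where the bound from Lemma \ref{lemma:uniform-moments-bound-of-SDEs} applies and the overall decay survives.
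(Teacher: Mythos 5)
Your proposal follows essentially the same route as the paper's proof: write the first-, second-, and third-variation SDEs, apply It\^o to the appropriate powers, use the enhanced monotonicity \eqref{eq:enhanced-coupled-condition} (with Young parameters keeping the diffusion coefficient within $2p_1-1$) to produce the dissipative drift, bound the inhomogeneous forcing via the $\mathcal{P}_{\gamma-2}$-form of Assumption \ref{assumption:growth-condition-of-frechet-derivatives-of-drift-and-diffusion} together with H\"older splits at the exponents $(\rho_1,\rho_2,\rho_3)$ and $(\rho_4,\rho_5,\rho_6)$ and the moment bounds of Lemma \ref{lemma:uniform-moments-bound-of-SDEs} under \eqref{eq:range-of-p0-in-lemma:differentiability-of-solutions}, and close with a Gr\"onwall-type argument bootstrapping the lower-order exponential decay. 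Apart from omitted technicalities (stopping-time localization with Fatou and the $\delta$-regularization of $\|\xi\|^{2q_2}$), this matches the paper's argument and is correct.
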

The proof of Lemma \ref{lemma:differentiability-of-solutions} will be presented in Appendix \ref{proof-of-lemma:differentiability-of-solutions}. As a consequence of Lemma \ref{lemma:differentiability-of-solutions}, the uniform estimate of the derivatives of $u(t.\cdot)$ is obtained by the following lemma.
\begin{lemma} \label{lemma:estimate-of-u-and-its-derivatives}
For any $x \in \mathbb{R}^{d}$ and 
some random variables
$v_{1} \in L^{2\rho_{2}}(\Omega, \mathbb{R}^{d})$,
$v_{2} \in L^{2\rho_{3}\rho_{5}}(\Omega, \mathbb{R}^{d})$,
$v_{3} \in L^{2\rho_{3}\rho_{6} }(\Omega, \mathbb{R}^{d})$,
where $\rho_{1}, \rho_{2}, \rho_{3}, \rho_{4}, \rho_{5}, \rho_{6} >1$ satisfying $1/\rho_{1} + 1/\rho_{2} + 1/\rho_{3}=1$.
Let Assumptions \ref{assumption:one-side-Lipschitz-condition-for-linear-operator}-\ref{assumption:growth-condition-of-frechet-derivatives-of-drift-and-diffusion}  be fulfilled with $2\lambda_{1} > \max\{L_{1}, L_{2}\}$ and
\begin{equation} \label{eq:range-of-p0-lemma:estimate-of-u-and-its-derivatives}
p_{0} \in 
\big[
\max\{\rho_{1}, \rho_{3}\rho_{4} \} \times (\gamma-2)
,
\infty \big)
\cap
[1,\infty),
\end{equation}
such that
\begin{equation}
\begin{aligned}
\left\|
Du(t,x)   v_{1} 
\right\|_{L^{1} (\Omega, \mathbb{R})}
&\leq Ce^{-\alpha_{1}t }
\|v_{1}\|_{L^{2} (\Omega, \mathbb{R}^{d})},  \\
\end{aligned}
\end{equation}
\begin{equation}
\begin{aligned}
\left\|
D^{2}u(t,x) (v_{1}, v_{2}) 
\right\|_{L^{1} (\Omega, \mathbb{R})} 
&\leq Ce^{-\tilde{\alpha}_{2}t} 
\sup_{r \in [0,T]}
\left\| 
\mathcal{P}_{\gamma-2}(X^{x}_{r}) 
\right\|_{L^{2\rho_{1} }(\Omega, \mathbb{R})}
\left\| 
v_{1} 
\right\|_{L^{2\rho_{2} } (\Omega, \mathbb{R}^{d})}
\left\| 
v_{2} 
\right\|_{L^{2\rho_{3} } (\Omega, \mathbb{R}^{d})},\\ \notag
\end{aligned}
\end{equation}
and
\begin{equation}
\begin{aligned}
\left\| 
D^{3} u(t,x)   (v_{1}, v_{2}, v_{3})
\right\|_{L^{1} (\Omega, \mathbb{R})} 
&\leq C e^{-\tilde{\alpha}_{3}t} 
\sup_{r \in [0,T]}
\left\| 
\mathcal{P}_{\gamma-2}(X^{x}_{r}) 
\right\|_{L^{2\max\{ \rho_{1}, \rho_{3}\rho_{4} \} }(\Omega, \mathbb{R})}  \times \\
& \hspace{10em} 
\|v_{1} \|_{L^{2\rho_{2}} (\Omega, \mathbb{R}^{d})} 
\|v_{2} \|_{L^{2\rho_{3}\rho_{5}} (\Omega, \mathbb{R}^{d})} 
\|v_{3} \|_{L^{2\rho_{3}\rho_{6}} (\Omega, \mathbb{R}^{d})},\\
\end{aligned}
\end{equation}
where $\alpha_{1}$, $\tilde{\alpha}_{2}$ and $\tilde{\alpha}_{3}$ are positive constants, with the latter two depending on $\alpha_{1}$, $\alpha_{2}$ and $\alpha_{3}$ defined as Lemma \ref{lemma:differentiability-of-solutions}, i.e.
\begin{equation}
\tilde{\alpha}_{2}:=\min\{2\alpha_{1}, \alpha_{2} \}, 
\quad 
\tilde{\alpha}_{3}:= \min\{3\alpha_{1}, \alpha_{1}+\alpha_{2}, \alpha_{3} \}.
\end{equation}
\end{lemma}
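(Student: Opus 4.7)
The plan is to represent each derivative of $u(t,x) = \mathbb{E}[\varphi(X^x_t)]$ via the chain rule in the mean-square differentiable sense and then transfer the estimates of the (mean-square) derivatives of the flow $X^x_t$ already obtained in Lemma \ref{lemma:differentiability-of-solutions}. Concretely, I would differentiate under the expectation sign (justified by the mean-square differentiability of $X^x_t$ proved in Lemma \ref{lemma:differentiability-of-solutions} and the boundedness of $D^i\varphi$ for $i\le 3$) to obtain
\begin{equation*}
Du(t,x)v_1 = \mathbb{E}\bigl[D\varphi(X^x_t)\,\mathcal{D}X^x_t v_1\bigr],
\end{equation*}
\begin{equation*}
D^2 u(t,x)(v_1,v_2) = \mathbb{E}\bigl[D^2\varphi(X^x_t)(\mathcal{D}X^x_t v_1,\mathcal{D}X^x_t v_2) + D\varphi(X^x_t)\,\mathcal{D}^2X^x_t(v_1,v_2)\bigr],
\end{equation*}
and the analogous four-term Faà di Bruno expansion for $D^3u(t,x)(v_1,v_2,v_3)$ involving $D^3\varphi,\mathcal{D}X^x_t$ (triple product), $D^2\varphi$ coupled with one $\mathcal{D}^2X^x_t$ and one $\mathcal{D}X^x_t$ (three symmetric copies), and $D\varphi\cdot\mathcal{D}^3X^x_t$.

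For the first-order bound I would simply use $\|D\varphi\|_\infty\le\|\varphi\|_3$, Cauchy--Schwarz, and the first estimate of Lemma \ref{lemma:differentiability-of-solutions} with $q_1=1$, which yields the decay rate $\alpha_1$. For the second-order bound, the term containing $D^2\varphi$ is controlled by $\|D^2\varphi\|_\infty$ together with the Hölder inequality at exponents $(2\rho_2,2\rho_3)$ and two applications of the first estimate of Lemma \ref{lemma:differentiability-of-solutions}, producing the rate $2\alpha_1$; the term containing $\mathcal{D}^2X^x_t$ is bounded by $\|D\varphi\|_\infty$ and the second estimate of Lemma \ref{lemma:differentiability-of-solutions} with $q_2=1$ and the triple $(\rho_1,\rho_2,\rho_3)$, producing the rate $\alpha_2$ and the $\mathcal{P}_{\gamma-2}$ factor in $L^{2\rho_1}$. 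Combining yields $e^{-\tilde{\alpha}_2 t}$ with $\tilde\alpha_2 = \min\{2\alpha_1,\alpha_2\}$.

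The third-order estimate proceeds analogously but requires a second layer of Hölder splitting. The pure $D^3\varphi$ term is handled by three applications of the first estimate at exponents $(2\rho_2, 2\rho_3\rho_5, 2\rho_3\rho_6)$ (using $\tfrac{1}{\rho_2}+\tfrac{1}{\rho_3}=1-\tfrac{1}{\rho_1}$ and $\tfrac{1}{\rho_5}+\tfrac{1}{\rho_6}=1-\tfrac{1}{\rho_4}$), giving the rate $3\alpha_1$. The three mixed $D^2\varphi\cdot\mathcal{D}^2 X\cdot\mathcal{D} X$ terms use one application of the first estimate together with the second estimate of Lemma \ref{lemma:differentiability-of-solutions} with $q_2\in(1,q_1)$ and an appropriate Hölder split, producing the rate $\alpha_1+\alpha_2$. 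The $D\varphi\cdot\mathcal{D}^3 X$ term is handled directly by the third estimate of Lemma \ref{lemma:differentiability-of-solutions}, giving the rate $\alpha_3$. Taking $\tilde\alpha_3=\min\{3\alpha_1,\alpha_1+\alpha_2,\alpha_3\}$ and noting that $\max\{\rho_1,\rho_3\rho_4\}$ dominates both the $\rho_1$ exponent appearing in the mixed terms (via Lemma \ref{lemma:differentiability-of-solutions} with $q_2=1$) and the $\rho_3\rho_4$ exponent appearing in the third-derivative term gives the stated $\sup_r\|\mathcal{P}_{\gamma-2}(X^x_r)\|_{L^{2\max\{\rho_1,\rho_3\rho_4\}}}$ factor.

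The main obstacles will be bookkeeping rather than ideas: verifying that the chosen Hölder exponents are admissible (i.e.\ $1/\rho_1+1/\rho_2+1/\rho_3=1$ and $1/\rho_4+1/\rho_5+1/\rho_6=1$ cover all Hölder splits needed simultaneously for three different types of summands in $D^3 u$), checking that the hypothesis \eqref{eq:range-of-p0-lemma:estimate-of-u-and-its-derivatives} on $p_0$ provides enough moments of $X^x_t$ so that $\sup_r\|\mathcal{P}_{\gamma-2}(X^x_r)\|_{L^{2\max\{\rho_1,\rho_3\rho_4\}}}$ is finite (this is where Lemma \ref{lemma:uniform-moments-bound-of-SDEs} together with the constraint $p_0\ge\max\{\rho_1,\rho_3\rho_4\}(\gamma-2)$ is used), and justifying the interchange of differentiation and expectation through the mean-square chain rule, which follows from the boundedness of the partial derivatives of $\varphi$ together with the integrability of the flow derivatives supplied by Lemma \ref{lemma:differentiability-of-solutions}.
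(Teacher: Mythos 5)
Your proposal is correct and follows essentially the same route as the paper: differentiate $u(t,x)=\mathbb{E}[\varphi(X^x_t)]$ under the expectation via the mean-square chain rule, expand $D^2u$ and $D^3u$ into the same variation-process terms, and transfer the decay rates $\alpha_1$, $\alpha_2$, $\alpha_3$ from Lemma \ref{lemma:differentiability-of-solutions} together with $\|\varphi\|_3<\infty$, Hölder with the stated exponents, and the moment condition on $p_0$ to control $\mathcal{P}_{\gamma-2}(X^x_r)$. The only cosmetic difference is in which Hölder split you use on the pure $D^2\varphi$ term (the paper simply uses $L^2\times L^2$), which does not affect the result.
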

\begin{remark}
Bearing Lemma \ref{lemma:estimate-of-u-and-its-derivatives} in mind,
we obtain that given the test function $\varphi \in C^{3}_{b}(\mathbb{R}^{d})$ and $t>0$, the function $u(t,\cdot) \in C^{3}_{b}(\mathbb{R}^{d})$.
Then $u(t,x)$ is the unique solution of \eqref{equation:kolmogorov-equation} (see Theorem 1.6.2 in \cite{cerrai2001second}).
\end{remark}

The proof of Lemma \ref{lemma:estimate-of-u-and-its-derivatives} can be seen in Appendix \ref{proof-of-lemma:estimate-of-u-and-its-derivatives}. Moreover,
Lemma \ref{lemma:estimate-of-u-and-its-derivatives} apparently yields the contractivity of $u(t,\cdot)$, which can also be derived by Lemma \ref{lemma:contractivity-of-sde}. Thus, one can have the following result.
\begin{corollary} \label{lemma:contractivity-of-u}
Let Assumptions \ref{assumption:one-side-Lipschitz-condition-for-linear-operator}-\ref{assumption:growth-condition-of-frechet-derivatives-of-drift-and-diffusion} hold with $2\lambda_{1} > \max\{L_{1}, L_{2}\}$, and recall that $\alpha_1=\alpha/2$, then 
\begin{equation}
\begin{aligned}
\left\|
u(t, \zeta_{1}) - u(t, \zeta_{2}) 
\right\|_{L^{1} (\Omega, \mathbb{R})}
&\leq Ce^{-\alpha_{1} t} 
\|\zeta_{1} - \zeta_{2} \|_{L^{2} (\Omega, \mathbb{R}^{d})},
\end{aligned}
\end{equation}
\end{corollary}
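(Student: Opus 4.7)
The plan is to read this corollary off directly from the gradient bound in Lemma \ref{lemma:estimate-of-u-and-its-derivatives}. The key observation is that, since $u(t, x) = \mathbb{E}[\varphi(X^{x}_{t})]$ is a deterministic function of $x \in \mathbb{R}^{d}$, so is its derivative $Du(t, x)$. Specializing the first inequality of Lemma \ref{lemma:estimate-of-u-and-its-derivatives} to a deterministic argument $v_{1} \in \mathbb{R}^{d}$ collapses both $L^{2}(\Omega)$-norms to Euclidean norms and yields the pointwise operator bound
\begin{equation*}
\|Du(t, x)\|_{L(\mathbb{R}^{d}, \mathbb{R})} \leq C e^{-\alpha_{1} t}, \quad \forall x \in \mathbb{R}^{d}.
\end{equation*}

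With this uniform bound in hand, I would apply the fundamental theorem of calculus along the straight segment $\zeta_{s} := \zeta_{2} + s(\zeta_{1} - \zeta_{2})$, $s \in [0,1]$, to write
\begin{equation*}
u(t, \zeta_{1}) - u(t, \zeta_{2}) = \int_{0}^{1} Du(t, \zeta_{s})(\zeta_{1} - \zeta_{2}) \, \dd s.
\end{equation*}
Taking absolute values, using Cauchy--Schwarz inside the integral, and applying the operator bound above gives $|u(t, \zeta_{1}) - u(t, \zeta_{2})| \leq C e^{-\alpha_{1} t} \|\zeta_{1} - \zeta_{2}\|$ almost surely. Squaring, taking expectations, and extracting a square root then yields the claimed $L^{2}(\Omega)$ contractivity.

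As the paper's remark indicates, an equally quick alternative bypasses Lemma \ref{lemma:estimate-of-u-and-its-derivatives} entirely and uses Lemma \ref{lemma:contractivity-of-sde} directly. For deterministic $x_{1}, x_{2}$, the inclusion $\varphi \in C^{3}_{b} \subset C^{1}_{b}$ and Cauchy--Schwarz yield
\begin{equation*}
|u(t, x_{1}) - u(t, x_{2})| \leq \|\varphi\|_{1} \bigl(\mathbb{E}[\|X^{x_{1}}_{t} - X^{x_{2}}_{t}\|^{2}]\bigr)^{1/2},
\end{equation*}
and Lemma \ref{lemma:contractivity-of-sde} with $p = 1$, $\iota = 0$ supplies the factor $e^{-c_{1} t/2}$, which is consistent with $\alpha_{1} = \alpha/2 = (2\lambda_{1} - L_{2})/2$ because $c_{1} \in (0, 2\lambda_{1} - L_{2}]$. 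One then lifts this deterministic Lipschitz bound to random initial data by pointwise evaluation at $\zeta_{1}(\omega), \zeta_{2}(\omega)$ and taking $L^{2}(\Omega)$-norms.

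Neither route hides a serious obstacle. The only point worth flagging is the passage from the $L^{2}$-valued formulation of Lemma \ref{lemma:estimate-of-u-and-its-derivatives} to a pointwise operator bound on $Du(t, x)$; this step uses precisely the determinism of $u(t, x)$ for fixed $x$, without which the statement for random $v_{1}$ could not be reduced to a bound on a deterministic operator norm.
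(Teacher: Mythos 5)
Your proposal is correct and follows essentially the same path the paper indicates: the paper gives no detailed proof of this corollary, stating only that it "apparently" follows from Lemma \ref{lemma:estimate-of-u-and-its-derivatives} and "can also be derived by Lemma \ref{lemma:contractivity-of-sde}", which are precisely the two routes you spell out (the mean-value/gradient-bound argument and the direct $\|\varphi\|_{1}$-plus-SDE-contractivity argument, with $c_{1}$ taken equal to $2\lambda_{1}-L_{2}=\alpha$ to recover the rate $\alpha_{1}=\alpha/2$). Both fillings-in are sound, including your observation that the deterministic character of $Du(t,x)$ is what lets the $L^{2}(\Omega)$ statement of Lemma \ref{lemma:estimate-of-u-and-its-derivatives} collapse to a uniform pointwise operator bound.
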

Before proceeding further, there is no guarantee that the LTPE method \eqref{introduction:LTPE-scheme} is continuous in the whole time interval since the numerical solutions are prevented from leaving a ball, whose  radius depends on the timestep size, in each iteration.
To address this issue and fully exploit the Kolmogorov equations, we recall the continuous version of the LTPE scheme \eqref{introduction:LTPE-scheme} defined in \eqref{intro:continuous-version-of-the-numerical-scheme}, ie,  $\{\mathbb{Z}^{n}(t)\}_{t\in  [t_{n}, t_{n+1}]}$, $n\in \{0,1,\dots,N-1 \}$, $N\in \mathbb{N}$.
It is time to show the next lemma concerning some regular estimates of this process.
\begin{lemma} \label{lemma:holder-continuity-of-the-process-z}
Let Assumptions \ref{assumption:one-side-Lipschitz-condition-for-linear-operator},  \ref{assumption:coercivity-condition-for-drift-and-diffusion},  \ref{assumption:growth-condition-of-frechet-derivatives-of-drift-and-diffusion}  hold with $2\lambda_{1}>L_{1}$. 
For $p \in [1,p_{0})$ and $t\in  [t_{n}, t_{n+1}]$, $n\in \{0,1,\dots,N-1 \}$, $N\in \mathbb{N}$, 

\begin{equation} \label{equation:moments-bound-of-the-auxiliary-process-Z}
\mathbb{E}
\left[
\big\| \mathbb{Z}^{n}(t) \big\|^{2p} 
\right] \leq C_{A}.
\end{equation}
Moreover,
for $p \in [1,p_{0}/\gamma]$ and $s, t \in [t_{n}, t_{n+1}]$, then
\begin{equation} \label{equation:holder-continuty-of-the-auxiliary-process-Z}
\begin{aligned}
\mathbb{E}
\left[
\big\|
\mathbb{Z}^{n}(t) - \mathbb{Z}^{n}(s)  
\big\|^{2p}
\right] 
\leq C_{A} |t-s |^{p}.
\end{aligned}
\end{equation}
\end{lemma}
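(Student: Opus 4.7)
The plan is to exploit the explicit representation \eqref{intro:continuous-version-of-the-numerical-scheme}, which decomposes $\mathbb{Z}^{n}(t)$ on $[t_{n},t_{n+1}]$ into an initial datum $\mathbb{Z}^{n}(t_{n}) = (I - \theta A h)\mathscr{P}(Y_{n})$, a drift increment $F(\mathscr{P}(Y_{n}))(t-t_{n})$, and a Brownian increment $g(\mathscr{P}(Y_{n}))(W_{t} - W_{t_{n}})$. In both assertions I would bound each piece in $L^{2p}$ separately after applying the elementary inequality $(a+b+c)^{2p} \le C(a^{2p} + b^{2p} + c^{2p})$, using two complementary tools: the pointwise cap $\|\mathscr{P}(Y_{n})\| \le h^{-1/(2\gamma)}$ supplied by Lemma \ref{lemma:necessary-estimates}, and the uniform moment bounds of Lemma \ref{lemma:uniform-moments-bound-of-the-LTPE-method}.

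For the moment estimate \eqref{equation:moments-bound-of-the-auxiliary-process-Z} under $p \in [1,p_{0})$, the initial term is handled by $\|(I - \theta Ah)\mathscr{P}(Y_{n})\| \le (1 + \theta \lambda_{d} h)\|Y_{n}\|$ and Lemma \ref{lemma:uniform-moments-bound-of-the-LTPE-method}. The drift piece is bounded deterministically via the polynomial growth \eqref{equation:growth-of-the-drift-F} combined with the projection cap, giving $\|F(\mathscr{P}(Y_{n}))\| \le C_{A} h^{-1/2}$ and hence $\|F(\mathscr{P}(Y_{n}))(t-t_{n})\|^{2p} \le C_{A} h^{p}$. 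For the stochastic piece I would condition on $\mathcal{F}_{t_{n}}$ and use the Gaussian moment identity to get $\mathbb{E}[\|g(\mathscr{P}(Y_{n}))(W_{t} - W_{t_{n}})\|^{2p} \mid \mathcal{F}_{t_{n}}] \le C_{p}\|g(\mathscr{P}(Y_{n}))\|^{2p}(t-t_{n})^{p}$, then combine the growth bound $\|g_{j}(x)\|^{2} \le C(1+\|x\|)^{\gamma+1}$ with the projection cap to produce $\|g(\mathscr{P}(Y_{n}))\|^{2p} \le Ch^{-p(\gamma+1)/(2\gamma)}$, which is tamed by the factor $(t-t_{n})^{p} \le h^{p}$ whenever $\gamma \ge 1$.

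For the H\"older-type estimate \eqref{equation:holder-continuty-of-the-auxiliary-process-Z} with $p \in [1,p_{0}/\gamma]$, subtracting $\mathbb{Z}^{n}(s)$ from $\mathbb{Z}^{n}(t)$ cancels the initial datum and leaves $F(\mathscr{P}(Y_{n}))(t-s) + g(\mathscr{P}(Y_{n}))(W_{t} - W_{s})$. The drift term is again controlled deterministically via the projection route: $\|F(\mathscr{P}(Y_{n}))\|^{2p}|t-s|^{2p} \le C h^{-p}|t-s|^{2p} \le C|t-s|^{p}$, using $|t-s|\le h$. For the diffusion term, conditioning on $\mathcal{F}_{s}$ and invoking the Gaussian moment identity produces $C_{p}\mathbb{E}[\|g(\mathscr{P}(Y_{n}))\|^{2p}]|t-s|^{p}$; here I would route the bound through Lemma \ref{lemma:uniform-moments-bound-of-the-LTPE-method} rather than through the projection cap, obtaining $\mathbb{E}[\|g(\mathscr{P}(Y_{n}))\|^{2p}] \le C\mathbb{E}[(1+\|Y_{n}\|)^{p(\gamma+1)}] \le C_{A}$ since $p(\gamma+1) \le 2p_{0}$ follows from $p \le p_{0}/\gamma$ together with $\gamma \ge 1$.

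The decisive step, and the main obstacle, is the diffusion bound in the H\"older estimate: because the full order $|t-s|^{p}$ must be preserved, the prefactor cannot carry negative powers of $h$, which rules out the brute-force projection cap on $g$ and forces the sharper moment-based route. This is precisely why the admissible range of $p$ shrinks from $[1,p_{0})$ in \eqref{equation:moments-bound-of-the-auxiliary-process-Z} to $[1,p_{0}/\gamma]$ in \eqref{equation:holder-continuty-of-the-auxiliary-process-Z}.
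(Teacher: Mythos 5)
Your proposal is correct and takes essentially the same route as the paper: starting from the decomposition \eqref{intro:continuous-version-of-the-numerical-scheme}, you bound the initial datum, drift and diffusion pieces separately using the projection estimates of Lemma \ref{lemma:necessary-estimates}, the polynomial growth of $F$ and $g$, Gaussian increment moments, and the uniform moment bounds of Lemma \ref{lemma:uniform-moments-bound-of-the-LTPE-method}. The only cosmetic difference is which term you push through the cap $\|\mathscr{P}(Y_{n})\|\le h^{-1/(2\gamma)}$ versus through the moment bounds (you cap the diffusion in \eqref{equation:moments-bound-of-the-auxiliary-process-Z} and the drift in \eqref{equation:holder-continuty-of-the-auxiliary-process-Z}, while the paper uses the $g$-estimate of Lemma \ref{lemma:necessary-estimates} together with moments), but both choices yield the stated bounds with the same range of $p$.
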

%
The proof of Lemma \ref{lemma:holder-continuity-of-the-process-z} is presented in Appendix \ref{proof-of-lemma:holder-continuity-of-the-process-z}.
At this time, we would like to present the error estimate between the random variable $\zeta \in \mathbb{R}^{d}$ and the projected one $\mathscr{P}(\zeta)\in \mathbb{R}^{d}$, which is defined by \eqref{introduction:LTPE-scheme}.
\begin{lemma} \label{lemma:error-estimate-between-x-and-projected-x}
Recall $\gamma$ given in Assumption \ref{assumption:growth-condition-of-frechet-derivatives-of-drift-and-diffusion}, let $\zeta \in L^{8\gamma+2}(\Omega, \mathbb{R}^{d})$ and let $\mathscr{P}(\zeta)$ be defined as \eqref{introduction:LTPE-scheme}, then 
\begin{equation}
\mathbb{E}
\left[
\|\zeta-\mathscr{P}(\zeta) \|^{2}
\right] 
\leq Ch^{2} 
\mathbb{E}
\left[ 
\|\zeta \|^{8\gamma+2}
\right].
\end{equation}
\end{lemma}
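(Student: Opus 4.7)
The plan is to split on whether the projection acts trivially or not. By the definition of $\mathscr{P}$ in \eqref{eq:projected-operator}, we have $\mathscr{P}(\zeta)=\zeta$ whenever $\|\zeta\|\le h^{-1/(2\gamma)}$, and in that regime the integrand $\|\zeta-\mathscr{P}(\zeta)\|^2$ vanishes identically. On the complementary event $\{\|\zeta\|>h^{-1/(2\gamma)}\}$ we have $\mathscr{P}(\zeta)=h^{-1/(2\gamma)}\zeta/\|\zeta\|$, so that
\[
\|\zeta-\mathscr{P}(\zeta)\|=\|\zeta\|-h^{-1/(2\gamma)}\le\|\zeta\|.
\]
Thus the whole expectation reduces to controlling $\mathbb{E}\bigl[\|\zeta\|^2\mathbf{1}_{\{\|\zeta\|>h^{-1/(2\gamma)}\}}\bigr]$.

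The key trick will be a Chebyshev-style absorption of the indicator into the integrand. On the event $\{\|\zeta\|>h^{-1/(2\gamma)}\}$ the ratio $h^{1/(2\gamma)}\|\zeta\|$ exceeds $1$, so raising to an arbitrary non-negative power preserves the inequality; raising to the $8\gamma$-th power gives
\[
\mathbf{1}_{\{\|\zeta\|>h^{-1/(2\gamma)}\}}
\le \bigl(h^{1/(2\gamma)}\|\zeta\|\bigr)^{8\gamma}
= h^{4}\|\zeta\|^{8\gamma}.
\]
Multiplying by $\|\zeta\|^2$ and taking expectations yields
\[
\mathbb{E}\bigl[\|\zeta-\mathscr{P}(\zeta)\|^{2}\bigr]
\le h^{4}\,\mathbb{E}\bigl[\|\zeta\|^{8\gamma+2}\bigr]
\le Ch^{2}\,\mathbb{E}\bigl[\|\zeta\|^{8\gamma+2}\bigr],
\]
where in the last step I invoke the standing stepsize restriction $h\in(0,1)$ (under which $h^{4}\le h^{2}$). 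The integrability assumption $\zeta\in L^{8\gamma+2}(\Omega,\mathbb{R}^{d})$ is exactly what guarantees the right-hand side is finite.

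There is no real obstacle here: the estimate is a two-line Markov-style argument once the geometry of $\mathscr{P}$ is recognized. The only thing worth flagging is that the exponent $8\gamma+2$ appearing in the hypothesis is dictated by the choice to absorb $\mathbf{1}_{\{\|\zeta\|>h^{-1/(2\gamma)}\}}$ with the $8\gamma$-th power of $h^{1/(2\gamma)}\|\zeta\|$, which is what delivers the desired $h^{2}$ rate (with room to spare). A smaller power would suffice to obtain a weaker rate, while this particular choice pins down exactly the $L^{8\gamma+2}$-integrability required in the statement, consistent with the rate $\mathcal{O}(h^{2})$ advertised in the introduction via \eqref{introduction:error-estimate-of-the-projection}.
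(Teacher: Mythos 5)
Your proof is correct, and it reaches the stated bound by a more direct route than the paper. Both arguments hinge on the same observation: the error vanishes on $\{\|\zeta\|\le h^{-1/(2\gamma)}\}$, so one only has to pay for the low-probability event $\{\|\zeta\|>h^{-1/(2\gamma)}\}$ with higher moments of $\zeta$. The paper executes this by writing $\mathbb{E}\bigl[\|\zeta-\mathscr{P}(\zeta)\|^{2}\mathbf{1}_{\mathcal{A}_h^c}\bigr]$, applying the H\"older inequality with $q=4\gamma+1$, $q'=1+1/(4\gamma)$, bounding $\|\zeta-\mathscr{P}(\zeta)\|_{L^{2q}}\le 2\|\zeta\|_{L^{2q}}$ by the triangle inequality together with $\|\mathscr{P}(\zeta)\|\le\|\zeta\|$, and then estimating $\|\mathbf{1}_{\mathcal{A}_h^c}\|_{L^{q'}}$ via the Markov inequality with $\beta=8\gamma+2$; chasing the exponents gives $2h^{4}\,\mathbb{E}[\|\zeta\|^{8\gamma+2}]$. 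You instead use the sharper pointwise bound $\|\zeta-\mathscr{P}(\zeta)\|\le\|\zeta\|$ on the bad event and absorb the indicator pointwise via $\mathbf{1}_{\{\|\zeta\|>h^{-1/(2\gamma)}\}}\le\bigl(h^{1/(2\gamma)}\|\zeta\|\bigr)^{8\gamma}=h^{4}\|\zeta\|^{8\gamma}$, which collapses the H\"older/Markov step into one line and gives $h^{4}\,\mathbb{E}[\|\zeta\|^{8\gamma+2}]$ with constant $1$. Both arguments therefore actually produce an $h^{4}$ bound and rely on the standing restriction $h\in(0,1)$ (present in all the paper's stepsize conditions) to state it as $Ch^{2}$, so your appeal to $h\le 1$ is consistent with the paper; the only cosmetic slip is that $\mathscr{P}$ is defined in \eqref{eq:projected-operator}, not \eqref{introduction:LTPE-scheme}.
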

The proof of Lemma \ref{lemma:error-estimate-between-x-and-projected-x} can be found in Appendix \ref{subsection:proof-of-lemma:error-estimate-between-x-and-projected-x}.
Up to this point, we have developed sufficient machinery to obtain the uniform weak error estimate of the SDE \eqref{eq:semi-linear-SODE} and the LTPE scheme \eqref{introduction:LTPE-scheme} as below.
\begin{theorem} \label{theorem:time-independent-weak-error-analysis}
Let Assumptions \ref{assumption:one-side-Lipschitz-condition-for-linear-operator}-\ref{assumption:growth-condition-of-frechet-derivatives-of-drift-and-diffusion} hold with $p_{0} \geq \max\{4\gamma+1, 5\gamma-4\}$ and $2\lambda_{1} > \max\{L_{1}, L_{2}\}$. Also, let $h$ be the uniform timestep satisfying 
\begin{equation}
h \in 
\left(
0, 
\min\left\{\tfrac{1}{2(1-\theta)\lambda_{1}}, \tfrac{p_{0}-p}{(1-\theta)(2p_{0}-p-1)\lambda_{1}}, \tfrac{1}{(1-\theta)\lambda_{d}},
1 
\right\}
\right),
\end{equation}
where $p \in (1,p_{0}) \cap \mathbb{N}$.
Moreover, denote by $\{X^{x_{0}}_{t} \}_{t\in [0,T]}$ and $\{Y^{x_{0}}_{n} \}_{0 \leq n \leq N}$, $N \in \mathbb{N}$ the solutions to SDE \eqref{eq:semi-linear-SODE} and the LTPE numerical scheme \eqref{introduction:LTPE-scheme} with the initial state $x_{0}$, respectively.
Then, for some test functions $\varphi \in C^{3}_{b}(\mathbb{R}^{d})$,
\begin{equation}
\big| 
\mathbb{E}\left[\varphi(Y^{x_{0}}_{N}) \right] 
- \mathbb{E}\left[\varphi(X^{x_{0}}_{T}) \right]
\big| 
\leq C_{A}h.
\end{equation}
\end{theorem}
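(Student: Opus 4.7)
The plan follows the three-way splitting outlined in the introduction, built around the Kolmogorov function $u(t,x):=\mathbb{E}[\varphi(X^x_t)]$. Since $\mathbb{E}[\varphi(X^{x_0}_T)]=u(T,x_0)$ and $u(0,\cdot)=\varphi$, the total weak error equals $|\mathbb{E}[u(0,Y^{x_0}_N)] - u(T,x_0)|$. Defining $Z_n:=(I-\theta Ah)Y_n$ and inserting $u(0,Z_N)$ and $u(T,Z_0)$ produces the decomposition $\mathrm{Error}_1+\mathrm{Error}_2+\mathrm{Error}_3$. Lemma~\ref{lemma:estimate-of-u-and-its-derivatives} and the remark following it guarantee that $u(t,\cdot)\in C^3_b(\mathbb{R}^d)$ uniformly in $t$, and that $u$ solves the Kolmogorov equation~\eqref{equation:kolmogorov-equation}.

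For $\mathrm{Error}_1$ I would use $u(0,\cdot)=\varphi$ together with the bound $|\varphi(Y_N)-\varphi(Z_N)|\le \|\varphi\|_1\|Y_N-Z_N\|\le \theta h\|\varphi\|_1\|A\|\|Y_N\|$, combined with the uniform second moment bound of Lemma~\ref{lemma:uniform-moments-bound-of-the-LTPE-method}. For $\mathrm{Error}_2$ I would invoke Corollary~\ref{lemma:contractivity-of-u} to write $|\mathbb{E}[u(T,Z_0)-u(T,x_0)]|\le Ce^{-\alpha_1 T}\|Z_0-x_0\|_{L^2}\le C_A h e^{-\alpha_1 T}$. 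Both contributions are $O(h)$ uniformly in $T$.

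The decomposition of $\mathrm{Error}_3$ relies on the identity $Z_{n+1}=\mathbb{Z}^n(t_{n+1})$ and on inserting $\mathbb{Z}^n(t_n)=(I-\theta Ah)\mathscr{P}(Y_n)$, producing the two sums in~\eqref{introduction:decomposition-of-error-3}. The first sum, $\sum_n|\mathbb{E}[u(T-t_n,\mathbb{Z}^n(t_n))]-\mathbb{E}[u(T-t_n,Z_n)]|$, is handled by Corollary~\ref{lemma:contractivity-of-u} applied to the pair $(\mathbb{Z}^n(t_n),Z_n)$, whose difference is $(I-\theta Ah)(\mathscr{P}(Y_n)-Y_n)$; Lemma~\ref{lemma:error-estimate-between-x-and-projected-x}, whose $L^{8\gamma+2}$-moment hypothesis is met thanks to $p_0\ge 4\gamma+1$ and Lemma~\ref{lemma:uniform-moments-bound-of-the-LTPE-method}, then yields an $O(h^2 e^{-\alpha_1(T-t_n)})$ bound per term. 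The geometric sum $\sum_n h^2 e^{-\alpha_1(T-t_n)}$ is $O(h)$.

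The main obstacle is the second sum, for which I would apply It\^o's formula to $t\mapsto u(T-t,\mathbb{Z}^n(t))$ on $[t_n,t_{n+1}]$. Because $\mathbb{Z}^n$ from~\eqref{intro:continuous-version-of-the-numerical-scheme} is an It\^o process with frozen drift $F(\mathscr{P}(Y_n))$ and diffusion $g(\mathscr{P}(Y_n))$, using the Kolmogorov equation~\eqref{equation:kolmogorov-equation} evaluated at the running point cancels the bulk of the It\^o terms and leaves
\[
\mathbb{E}[u(T-t_{n+1},\mathbb{Z}^n(t_{n+1}))] - \mathbb{E}[u(T-t_n,\mathbb{Z}^n(t_n))] = \int_{t_n}^{t_{n+1}}\mathbb{E}[\mathcal{R}_n(t)]\,\mathrm{d}t,
\]
with $\mathcal{R}_n(t) = Du(T-t,\mathbb{Z}^n(t))[F(\mathscr{P}(Y_n))-F(\mathbb{Z}^n(t))] + \tfrac{1}{2}\sum_j D^2u(T-t,\mathbb{Z}^n(t))\bigl(g_j(\mathscr{P}(Y_n))-g_j(\mathbb{Z}^n(t)),\,g_j(\mathscr{P}(Y_n))+g_j(\mathbb{Z}^n(t))\bigr)$. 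A naive estimate gives only $O(h^{1/2})$ per factor; to recover the missing half-order I would Taylor-expand $F$ and $g_j$ around $\mathscr{P}(Y_n)$ using~\eqref{eqn:D2FPestimate} and~\eqref{eqn:D2GPestimate}, and then expand $Du(T-t,\mathbb{Z}^n(t))$ (resp.\ $D^2u$) one order around $\mathscr{P}(Y_n)$ as well, so that every factor $W_t-W_{t_n}$ appearing at leading order ends up paired with an $\mathcal{F}_{t_n}$-measurable coefficient and vanishes under the expectation. Residual terms are controlled by the H\"older estimate of Lemma~\ref{lemma:holder-continuity-of-the-process-z} (whose moment hypothesis requires $p_0\ge 5\gamma-4$), the polynomial growth bounds on $F$ and $g_j$, and crucially the exponentially decaying derivative estimates of Lemma~\ref{lemma:estimate-of-u-and-its-derivatives}. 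This produces a per-step contribution of order $h^2 e^{-\tilde\alpha_2(T-t_n)}$; a final geometric summation, combined with the estimates of $\mathrm{Error}_1,\mathrm{Error}_2$ and of the first sum in $\mathrm{Error}_3$, delivers the claimed $C_A h$ bound uniformly in $T$.
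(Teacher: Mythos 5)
Your proposal is correct and follows essentially the same route as the paper's proof: the identical decomposition into the three errors via $Z_n=(I-\theta Ah)Y_n$, the telescoping of the last piece using $Z_{n+1}=\mathbb{Z}^n(t_{n+1})$, and the It\^o/Kolmogorov cancellation with Taylor expansions around an $\mathcal{F}_{t_n}$-measurable point so that Brownian-increment terms vanish, controlled by Lemmas \ref{lemma:uniform-moments-bound-of-the-LTPE-method}, \ref{lemma:estimate-of-u-and-its-derivatives}, \ref{lemma:holder-continuity-of-the-process-z}, \ref{lemma:error-estimate-between-x-and-projected-x} and Corollary \ref{lemma:contractivity-of-u}. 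The only cosmetic differences are that you expand around $\mathscr{P}(Y_n)$ rather than $\mathbb{Z}^n(t_n)$ (these differ by $O(h)$) and use bilinearity instead of the paper's quadratic-form identity for the diffusion correction.
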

\begin{proof} [Proof of Theorem \ref{theorem:time-independent-weak-error-analysis}]
We begin with the following denotation, for $n\in \{0,1,\dots,N-1 \}$, $N\in \mathbb{N}$, 
\begin{equation}
Z_{n}:= Y^{x_{0}}_{n} - \theta A  Y^{x_{0}}_{n} h. 
\end{equation}
Due to the fact that
\begin{equation}
\mathbb{E}\left[\varphi(X^{x_{0}}_{T}) \right]
=u(T,x_0), \quad
\mathbb{E}\left[\varphi(Y^{x_{0}}_{N}) \right] 
=\mathbb{E}\left[\varphi\Big(X^{Y^{x_{0}}_{N}}_{0}\Big) \right]
=u(0,Y^{x_{0}}_{N}),
\end{equation}
the weak error analysis can be divided into several parts as
\begin{equation} \label{eq:decomposition-of-the-weak-error}
\begin{aligned}
\big| 
\mathbb{E}\left[\varphi(Y^{x_{0}}_{N}) \right] 
- \mathbb{E}\left[\varphi(X^{x_{0}}_{T}) \right]
\big| 
&=  
\big| 
\mathbb{E}\left[u(T, x_{0}) \right] 
- \mathbb{E}\left[u(0, Y^{x_{0}}_{N}) \right] 
\big| \\
& \leq  
\big| 
\mathbb{E}\left[u(0, Y^{x_{0}}_{N}) \right] 
- \mathbb{E}\left[u(0, Z_{N}) \right]
\big|  
+ \big|  
\mathbb{E}\left[u(T, Z_{0}) \right] 
- \mathbb{E}\left[u(T, x_{0}) \right] 
\big| \\
& \quad + \big| 
\mathbb{E}\left[u(0, Z_{N}) \right] 
- \mathbb{E}\left[u(T, Z_{0}) \right]
\big|\\
& =: J_{1} + J_{2} + J_{3}.
\end{aligned}
\end{equation}
For the estimate of $J_{1}$, one observes by the construction of $Z_{N}$ and Lemma \ref{lemma:uniform-moments-bound-of-the-LTPE-method} that
\begin{equation} \label{eq:final-estimate-of-J1}
\begin{aligned}
J_{1} \leq h
\mathbb{E} \left[ \|AY_{N}\| \right] 
\leq C_{A}h.
\end{aligned}
\end{equation}
For the estimate of $J_{2}$, due to the fact $Z_{0}=x_{0}-\theta A x_{0}$, Lemma \ref{lemma:uniform-moments-bound-of-the-LTPE-method} and Corollary \ref{lemma:contractivity-of-u},
we derive, for some positive constant $\alpha_{1}$ defined in Corollary \ref{lemma:contractivity-of-u},
\begin{equation} \label{eq:estimate-of-J3}
\begin{aligned}
J_{2} &\leq C e^{-\alpha_{1} T}
\|Z_{0}-x_{0} \|_{L^{2} (\Omega, \mathbb{R}^{d})}  
\leq C_{A}e^{-\alpha_{1} T}h.
\end{aligned}
\end{equation}
About $J_{3}$, by \eqref{intro:continuous-version-of-the-numerical-scheme}, it is easy to see $Z_{n+1}=\mathbb{Z}^{n}(t_{n+1})$. Then using a telescoping sum argument shows that
\begin{equation}
\begin{aligned}
J_{3} 
&= \left| 
\sum_{n=0}^{N-1}  
\mathbb{E}\big[u(T-t_{n+1}, Z_{n+1}) \big] 
- \mathbb{E}\big[u(T-t_{n}, Z_{n}) \big] 
\right| \\
& \leq 
\left| 
\sum_{n=0}^{N-1}   
\mathbb{E}
\left[
u\big(T-t_{n}, \mathbb{Z}^{n}(t_{n})\big) 
\right] 
- 
\mathbb{E}\big[u(T-t_{n}, Z_{n}) \big] 
\right| \\
& \quad + \left| 
\sum_{n=0}^{N-1}  
\mathbb{E}
\left[
u\big( T-t_{n+1}, \mathbb{Z}^{n}(t_{n+1}) \big) 
\right] - \mathbb{E}\left[u\big(T-t_{n}, \mathbb{Z}^{n}(t_{n})\big) \right] \right| \\
& =: J_{3,1} + J_{3,2}.
\end{aligned}
\end{equation}
Together with the same analysis as \eqref{eq:estimate-of-J3}, applying Lemma \ref{lemma:error-estimate-between-x-and-projected-x}, Corollary \ref{lemma:contractivity-of-u} and the construction of $ \mathbb{Z}^{n}(t_{n})$  yields
\begin{equation}
\begin{aligned}
J_{3,1} &\leq C\sum_{n=0}^{N-1} 
e^{-\alpha_{1} (T-t_{n} )} 
\|
Z_{n}-\mathbb{Z}^{n}(t_{n}) 
\|_{L^{2} (\Omega, \mathbb{R}^{d})} 
\leq C_{A}
\sum_{n=0}^{N-1} 
he^{-\alpha_{1} (T-t_{n})} 
\left(
1
+\sup_{0\leq r \leq N} 
\|Y_{r} \|^{4\gamma}_{L^{8\gamma+2} (\Omega, \mathbb{R}^{d})} 
\right)  h \\
& \leq 
C_{A} 
\left(
1
+
\sup_{0\leq r \leq N} 
\|Y_{r} \|^{4\gamma+1}_{L^{8\gamma+2} (\Omega, \mathbb{R}^{d})} 
\right)h,
\end{aligned}
\end{equation}
where $\sum_{n=0}^{N-1} he^{-\alpha_{1} (T-t_{n})}
$ is uniformly bounded. 

For the remaining term $J_{3,2}$,
recalling the associated Kolmogorov equation \eqref{equation:kolmogorov-equation}, the It\^o formula and \eqref{intro:continuous-version-of-the-numerical-scheme}, we obtain that, for every $n \in \{ 0, 1, \dots, N-1\}$, $N \in \mathbb{N}$,
\begin{equation}
\begin{aligned}
&\mathbb{E}
\left[
u\big(T-t_{n+1}, \mathbb{Z}^{n}(t_{n+1})\big) 
\right] 
- 
\mathbb{E}
\left[
u\big(T-t_{n}, \mathbb{Z}^{n}(t_{n})\big) 
\right]\\
& = 
\mathbb{E} 
\left[
\int_{t_{n}}^{t_{n+1}} 
Du\big(T-s, \mathbb{Z}^{n}(s)\big)  
\Big(
F\big(\mathscr{P}(Y_{n})\big) - F\big(\mathbb{Z}^{n}(s)\big)
\Big) \dd s
\right] \\
& \quad + \tfrac{1}{2}\sum_{j=1}^{m} 
\mathbb{E}
\Bigg[
\int_{t_{n}}^{t_{n+1}}
D^{2}u\big(T-s, \mathbb{Z}^{n}(s)\big) 
\Big(
g_{j}\big(\mathscr{P}(Y_{n})\big),
g_{j}\big(\mathscr{P}(Y_{n})\big) 
\Big) 
\\
&\qquad- 
D^{2}u\big(T-s, \mathbb{Z}^{n}(s) \big) 
\Big(
g_{j}\big(\mathbb{Z}^{n}(s)\big),
g_{j}\big(\mathbb{Z}^{n}(s)\big) 
\Big) \dd s
\Bigg] \\
& =: \mathbb{J}_{1} + \mathbb{J}_{2}.
\end{aligned}
\end{equation}
A further decomposition is introduced for $\mathbb{J}_{1}$
\begin{equation}
\begin{aligned}
\mathbb{J}_{1}
&= 
\mathbb{E} 
\left[
\int_{t_{n}}^{t_{n+1}} 
Du\big(T-s, \mathbb{Z}^{n}(s)\big)  
\Big(
F\big(\mathscr{P}(Y_{n})\big) 
- F\big(\mathbb{Z}^{n}(t_{n})\big) 
\Big) \dd s
\right] \\
& \quad  + 
\mathbb{E} 
\left[
\int_{t_{n}}^{t_{n+1}} 
Du\big(T-s, \mathbb{Z}^{n}(t_{n}) \big) 
\Big( 
F\big(\mathbb{Z}^{n}(t_{n})\big)  
- F\big(\mathbb{Z}^{n}(s)\big) 
\Big) \dd s
\right] \\
& \quad  + 
\mathbb{E} 
\left[
\int_{t_{n}}^{t_{n+1}} 
\Big( 
Du\big(T-s, \mathbb{Z}^{n}(s)\big) 
- Du\big(T-s, \mathbb{Z}^{n}(t_{n}) \big)
\Big) 
\Big(
F\big(\mathbb{Z}^{n}(t_{n})\big)  
- F\big(\mathbb{Z}^{n}(s)\big) 
\Big) \dd s
\right] \\
& =: \mathbb{J}_{1,1} + \mathbb{J}_{1,2} + \mathbb{J}_{1,3}.
\end{aligned}
\end{equation}
Now we are in a position to estimate $\mathbb{J}_{1,1}$.
By Lemma \ref{lemma:necessary-estimates}, Lemma \ref{lemma:estimate-of-u-and-its-derivatives}, \eqref{equation:growth-of-the-drift-F} 
and the H\"older inequality, we get 
\begin{equation}
\begin{aligned}
\mathbb{J}_{1,1} 
&\leq C\int_{t_{n}}^{t_{n+1}}  e^{-\alpha_{1}(T-s)} 
\left\|
F\big(\mathscr{P}(Y_{n})\big) 
- F\big(\mathbb{Z}^{n}(t_{n})\big) 
\right\|_{L^{2}(\Omega, \mathbb{R}^{d})} \dd s \\
& \leq C_{A} \int_{t_{n}}^{t_{n+1}}
e^{-\alpha_{1}(T-s)} \dd s   
\left(
1+ \sup_{0\leq r \leq N}
\|Y_{r} \|^{\gamma}_{L^{2\gamma} (\Omega, \mathbb{R}^{d})} 
\right)h .
\end{aligned}
\end{equation}
For the estimate of $\mathbb{J}_{1,2}$, the Taylor expansion 
and a conditional expectation argument gives
\begin{equation}
\begin{aligned}
-\mathbb{J}_{1,2} 
&= \mathbb{E} 
\left[
\int_{t_{n}}^{t_{n+1}} 
\left\langle 
Du\big(T-s, \mathbb{Z}^{n}(t_{n})\big),  
DF\big(\mathbb{Z}^{n}(t_{n})\big)
F\big(\mathscr{P}(Y_{n})\big) (s-t_n)
+ \mathcal{R}_{F} 
\big(\mathbb{Z}^{n}(s), \mathbb{Z}^{n}(t_{n})\big)
\right\rangle \dd s
\right], \\
\end{aligned}
\end{equation}
where
\begin{equation}
\begin{aligned}
\mathcal{R}_{F} 
\big(\mathbb{Z}^{n}(s), \mathbb{Z}^{n}(t_{n})\big) 
:= \int_{0}^{1}  
\bigg(
DF\Big(\mathbb{Z}^{n}(t_{n}) 
+ r \big(
\mathbb{Z}^{n}(s)  - \mathbb{Z}^{n}(t_{n}) 
\big)
\Big) 
-DF\big(\mathbb{Z}^{n}(t_{n})\big)
\bigg)
\Big(\mathbb{Z}^{n}(s)  - \mathbb{Z}^{n}(t_{n}) \Big) 
\dd r .
\end{aligned}
\end{equation}
Keeping 
\eqref{eq:growth-of-the-diff-drift-F} and 
\eqref{equation:growth-of-the-drift-F} 
in mind, we obtain that 
\begin{equation}
\begin{aligned}
(s-t_n)
\left\|
DF\big(\mathbb{Z}^{n}(t_{n})\big) 
F\big(\mathscr{P}(Y_{n})\big) 
\right\|_{L^{2} (\Omega, \mathbb{R}^{d})}  
&\leq C_{A} 
\left\|
\big(1+\|\mathbb{Z}^{n}(t_{n})\|\big)^{\gamma-1} \big(1+\|\mathscr{P}(Y_{n})\|\big)^{\gamma} 
\right\|_{L^{2} (\Omega, \mathbb{R})} (s-t_n). \\
\end{aligned}
\end{equation}
If $\gamma = 1$, by Lemma \ref{lemma:uniform-moments-bound-of-the-LTPE-method}, one directly arrives at 
\begin{equation}
\begin{aligned}
(s-t_n)
\left\|
DF\big(\mathbb{Z}^{n}(t_{n})\big) 
F\big(\mathscr{P}(Y_{n})\big) 
\right\|_{L^{2} (\Omega, \mathbb{R}^{d})}  
&\leq C_{A} 
\left(
1+\sup_{0\leq r \leq N} 
\|Y_{r} \|_{L^{2} (\Omega, \mathbb{R}^{d})}
\right)  h.
\end{aligned}
\end{equation}
If $\gamma >1$, using  the H\"older inequality yields
\begin{small}
\begin{equation}
\begin{aligned}
\left\|
\big(1+\|\mathbb{Z}^{n}(t_{n})\|\big)^{\gamma-1} \big(1+\|\mathscr{P}(Y_{n})\|\big)^{\gamma} 
\right\|_{L^{2} (\Omega, \mathbb{R})} 
\leq
\left\|
\big(1+\|\mathbb{Z}^{n}(t_{n})\|\big)^{\gamma-1} 
\right\|_{L^{2k_{1}} (\Omega, \mathbb{R})}
\Big\|
\big(1+\|\mathscr{P}(Y_{n})\|\big)^{\gamma} 
\Big\|_{L^{2k_{2}} (\Omega, \mathbb{R})},
\end{aligned}
\end{equation}
\end{small}
where we take $k_{1}=(2\gamma-1)/(\gamma-1)$ and $k_{2}=(2\gamma-1)/\gamma$ with Lemma \ref{lemma:necessary-estimates}, Lemma \ref{lemma:uniform-moments-bound-of-SDEs} and Lemma \ref{lemma:holder-continuity-of-the-process-z} to get
\begin{equation}
\begin{aligned}
&(s-t_n)
\left\|
DF\big(\mathbb{Z}^{n}(t_{n})\big) 
F\big(\mathscr{P}(Y_{n})\big) 
\right\|_{L^{2} (\Omega, \mathbb{R}^{d})}  \\
&\leq C_{A}
\left(
1+ 
\|\mathbb{Z}^{n}(t_{n}) \|^{\gamma-1}_{L^{4\gamma-2} (\Omega, \mathbb{R}^{d})}
\right) 
\left(
1+
\|Y_{n} \|^{\gamma}_{L^{4\gamma-2} (\Omega, \mathbb{R}^{d})}
\right)
h \\
&\leq C_{A}
\left(
1+\sup_{0\leq r \leq N} 
\|Y_{r} \|^{2\gamma-1}_{L^{4\gamma-2} (\Omega, \mathbb{R}^{d})}
\right)  h.
\end{aligned}
\end{equation}
Similarly, we can also attain
\begin{small}
\begin{equation}
\begin{aligned}
\left\|
\mathcal{R}_{F} 
\big(
\mathbb{Z}^{n}(s), \mathbb{Z}^{n}(t_{n})
\big) \right\|_{L^{2} (\Omega, \mathbb{R}^{d})} 
&\leq C 
\left\|
\mathcal{P}_{\gamma-2} 
\Big(
\mathbb{Z}^{n}(t_{n}) + r \big( \mathbb{Z}^{n}(s)
-\mathbb{Z}^{n}(t_{n}) \big), \mathbb{Z}^{n}(t_{n}) 
\Big)  
\left\| 
\mathbb{Z}^{n}(s) -\mathbb{Z}^{n}(t_{n})
\right\|^{2} 
\right\|_{L^{2} (\Omega, \mathbb{R})} \\
& \leq C_{A}
\left(
1 + \sup_{0\leq r \leq N} 
\|Y_{r} \|^{\max \{2\gamma, 3\gamma-2\}}_{L^{\max\{ 4\gamma, 6\gamma-4\}} (\Omega, \mathbb{R}^{d})} 
\right) h.
\end{aligned}
\end{equation}
\end{small}
Then it follows from Lemma \ref{lemma:estimate-of-u-and-its-derivatives} that
\begin{equation}
\begin{aligned}
\mathbb{J}_{1,2} 
&\leq C_{A} 
\int_{t_{n}}^{t_{n+1}}
e^{-\alpha_{1}(T-s)} \dd s   
\left(
1+\sup_{0\leq r \leq N}  
\|Y_{r} \|^{\max \{2\gamma, 3\gamma-2\}}_{L^{\max\{ 4\gamma, 6\gamma-4\}} (\Omega, \mathbb{R}^{d})} 
\right)  h.  \\
\end{aligned}
\end{equation}
For the estimate of $\mathbb{J}_{1,3}$, the Taylor expansion to $u(t,\cdot)$ shows,  there exists some
$\mathbb{R}^{d}$-valued random variable $\widetilde{\upsilon} \in L^{2\rho_{1}}(\Omega,\mathbb{R}^{d})$
lying between $\mathbb{Z}^{n}(s)$ and $\mathbb{Z}^{n}(t_{n})$,
\begin{equation}
\begin{aligned}
\mathbb{J}_{1,3} 
&= \mathbb{E} 
\left[
\int_{t_{n}}^{t_{n+1}} 
D^{2}u(T-s, \widetilde{\upsilon})  
\Big(
\mathbb{Z}^{n}(s) - \mathbb{Z}^{n}(t_{n}),
F\big(\mathbb{Z}^{n}(t_{n})\big) 
- F\big(\mathbb{Z}^{n}(s)\big) 
\Big) \dd s
\right].
\end{aligned}
\end{equation}
Applying 
Lemma \ref{lemma:uniform-moments-bound-of-SDEs}, 
Lemma \ref{lemma:estimate-of-u-and-its-derivatives} and the H\"older inequality yields, 
\begin{equation}
\begin{aligned}
\mathbb{J}_{1,3} 
& \leq C 
\int_{t_{n}}^{t_{n+1}}
e^{-\tilde{\alpha}_{2}(T-s)} 
\sup_{r \in [0,T]}  
\left\| 
\mathcal{P}_{\gamma-2}(X^{\widetilde{\upsilon}}_{r}) 
\right\|_{L^{2\rho_{1} }(\Omega, \mathbb{R})}
\left\| 
\mathbb{Z}^{n}(s) - \mathbb{Z}^{n}(t_{n}) 
\right\|_{L^{2\rho_{2} } (\Omega, \mathbb{R}^{d})} \times\\
&\hspace{18em}   
\left\| 
F\big(\mathbb{Z}^{n}(t_{n})\big) 
- F\big(\mathbb{Z}^{n}(s)\big) 
\right\|_{L^{2\rho_{3}} (\Omega, \mathbb{R}^{d})} \dd s. \\
\end{aligned}
\end{equation}
We need to discuss the estimation of $\mathbb{J}_{1,3}$ through the range of $\gamma$. 
For the case that $\gamma >2$, taking 
$\rho_{1}=(4\gamma-3)/(\gamma-2)$, $\rho_{2}=(4\gamma-3)/\gamma$ and $\rho_{3}=(4\gamma-3)/(2\gamma-1)$ with Assumption \ref{assumption:growth-condition-of-frechet-derivatives-of-drift-and-diffusion} and Lemma \ref{lemma:holder-continuity-of-the-process-z}  gives
\begin{equation}
\begin{aligned}
\mathbb{J}_{1,3} 
\leq C_{A} \int_{t_{n}}^{t_{n+1}} 
e^{-\tilde{\alpha}_{2}(T-s)} \dd s   
\left(
1 +\sup_{0\leq r \leq N} 
\left\|
Y_{r} 
\right\|^{4\gamma-3}_{L^{8\gamma-6 }(\Omega, \mathbb{R}^{d})}  
\right) h.
\end{aligned}
\end{equation}
For the case that $1\leq \gamma \leq 2$, choosing $\rho_{1}=\infty$, $\rho_{2}=(3\gamma-1)/\gamma$ and $\rho_{3}= (3\gamma-1)/(2\gamma-1)$ shows
\begin{equation}
\begin{aligned}
\mathbb{J}_{1,3} 
\leq C_{A} \int_{t_{n}}^{t_{n+1}}  
e^{-\tilde{\alpha}_{2}(T-s)}\dd s   
\left(
1 +\sup_{0\leq r \leq N} 
\left\|Y_{r} \right\|^{3\gamma-1}_{L^{6\gamma-2 }(\Omega, \mathbb{R}^{d})}  
\right) h.
\end{aligned}
\end{equation}
Consequently, combining the estimations of $\mathbb{J}_{1,1}$-$\mathbb{J}_{1,3}$ leads to
\begin{equation} \label{eq:final-estimate-of-J1}
\begin{aligned}
\mathbb{J}_{1} 
\leq  C_{A} \int_{t_{n}}^{t_{n+1}}  
e^{-\min(\alpha_{1}, \tilde{\alpha}_{2} )(T-s)}  \dd s  
\left(
1 + \sup_{0\leq r \leq N}
\left\|
Y_{r} 
\right\|^{\max\{3\gamma-1, 4\gamma-3\}}_{L^{\max\{6\gamma-2, 8\gamma-6 \}}(\Omega, \mathbb{R}^{d})}  
\right) h.
\end{aligned}
\end{equation}
For the estimate of $\mathbb{J}_{2}$, we here use the following equality, for any matrix $U \in \mathbb{R}^{d\times d}$ and any $a, b \in \mathbb{R}^{d}$,
\begin{equation}
\begin{aligned}
a^{T}Ua - b^{T}Ub 
= -(a-b)^{T}U(a-b)-(a-b)^{T}Ub-a^{T}U(a-b).
\end{aligned}
\end{equation}
As a result, one can show a further decomposition of $\mathbb{J}_{2}$ as follows,
\begin{equation}
\begin{aligned}
\mathbb{J}_{2} 
& = -\tfrac{1}{2}\sum_{j=1}^{m} 
\mathbb{E}\left[
\int_{t_{n}}^{t_{n+1}}
D^{2}u\big(T-s, \mathbb{Z}^{n}(s)\big)
\Big(
g_{j}\big(\mathbb{Z}^{n}(s)\big) 
- g_{j}\big(\mathscr{P}(Y_{n})\big),
g_{j}\big(\mathbb{Z}^{n}(s)\big)
-  g_{j}\big(\mathscr{P}(Y_{n})\big) 
\Big) \dd s
\right] \\
& \quad -\tfrac{1}{2}\sum_{j=1}^{m} 
\mathbb{E}\left[
\int_{t_{n}}^{t_{n+1}}
D^{2}u\big(T-s, \mathbb{Z}^{n}(s)\big) 
\Big(
g_{j}\big(\mathbb{Z}^{n}(s)\big) 
- g_{j}\big(\mathscr{P}(Y_{n})\big), 
g_{j}\big(\mathscr{P}(Y_{n})\big) 
\Big) \dd s
\right]\\
& \quad -\tfrac{1}{2}\sum_{j=1}^{m} 
\mathbb{E}
\left[ 
\int_{t_{n}}^{t_{n+1}}
D^{2}u\big(T-s, \mathbb{Z}^{n}(s)\big) 
\Big(
g_{j}\big(\mathscr{P}(Y_{n})\big),
g_{j}\big(\mathbb{Z}^{n}(s)\big)
-  g_{j}\big(\mathscr{P}(Y_{n})\big) 
\Big) \dd s
\right] \\
& =: \mathbb{J}_{2,1} + \mathbb{J}_{2,2} + \mathbb{J}_{2,3}.
\end{aligned}
\end{equation}
For $\mathbb{J}_{2,1}$, combining Lemma \ref{lemma:differentiability-of-solutions}, Lemma \ref{lemma:estimate-of-u-and-its-derivatives}  with $\rho_{2}=\rho_{3}$ implies
\begin{equation}
\begin{aligned}
\mathbb{J}_{2,1} 
&\leq C \sum_{j=1}^{m}  
\int_{t_{n}}^{t_{n+1}}  
e^{-\tilde{\alpha}_{2}(T-s)} 
\sup_{r \in [0,T]}  
\left\|
\mathcal{P}_{\gamma-2}
\left(
X^{\mathbb{Z}^{n}(s)}_{r} 
\right)
\right\|_{L^{2\rho_{1} }(\Omega, \mathbb{R})}      
\left\| 
g_{j}\big(\mathbb{Z}^{n}(s)\big) 
- g_{j}\big(\mathscr{P}(Y_{n})\big) 
\right\|^{2}_{L^{2\rho_{2}} (\Omega, \mathbb{R}^{d})} \dd s , \\
\end{aligned}
\end{equation}
where using Lemma \ref{lemma:holder-continuity-of-the-process-z},  Assumption \ref{assumption:growth-condition-of-frechet-derivatives-of-drift-and-diffusion} and the H\"older inequality 
gives that, for some $\rho_{2} \geq 1$,  $j\in \{1,\dots,m \}$,
\begin{equation}
\begin{aligned}
&\left\|
g_{j}\big(\mathbb{Z}^{n}(s)\big) 
- g_{j}\big(\mathscr{P}(Y_{n})\big) 
\right\|_{L^{2\rho_{2}} (\Omega, \mathbb{R}^{d})} \\ 
&\leq  
\left\| 
g_{j}(\mathbb{Z}^{n}(s)) 
- g_{j}\big(\mathbb{Z}^{n}(t_{n})\big) 
\right\|_{L^{2\rho_{2}} (\Omega, \mathbb{R}^{d})} 
+  \left\| 
g_{j}\big(\mathbb{Z}^{n}(t_{n})\big) 
- g_{j}\big(\mathscr{P}(Y_{n})\big) 
\right\|_{L^{2\rho_{2}} (\Omega, \mathbb{R}^{d})} \\
& \leq C_{A} h^{\frac{1}{2}} 
\left(
1 + \sup_{0 \leq r \leq N} 
\left\|Y_{r} \right\|^{\frac{3\gamma-1}{2}}_{L^{\rho_{2}(3\gamma-1)} (\Omega, \mathbb{R}^{d}) } 
\right) 
+ C_{A} h  
\left(
1 + \sup_{0 \leq r \leq N} 
\left\|Y_{r} \right\|^{\frac{\gamma+1}{2}}_{L^{\rho_{2}(\gamma+1)} (\Omega, \mathbb{R}^{d}) } 
\right) \\
& \leq C_{A} h^{\frac{1}{2}} 
\left(
1 + \sup_{0 \leq r \leq N}
\left\|Y_{r} \right\|^{\frac{3\gamma-1}{2}}_{L^{\rho_{2}(3\gamma-1)} (\Omega, \mathbb{R}^{d}) } 
\right).
\end{aligned}
\end{equation}
For $\gamma >2$, choosing $\rho_{1} = (4\gamma-3)/(\gamma-2)$, $\rho_{2}=\rho_{3} = (8\gamma-6)/(3\gamma-1)$ yields
\begin{equation}
\begin{aligned}
\mathbb{J}_{2,1} 
\leq C_{A}  \int_{t_{n}}^{t_{n+1}}  
e^{- \tilde{\alpha}_{2}(T-s)}    \dd s  
\left(
1 + \sup_{0 \leq r \leq N} 
\left\|Y_{r} \right\|^{4\gamma-3}_{L^{8\gamma-6 }(\Omega, \mathbb{R}^{d})}  
\right) h.
\end{aligned}
\end{equation}
For $1\leq \gamma \leq 2 $, taking $\rho_{1}= \infty$, $\rho_{2}= \rho_{3}=2$ leads to
\begin{equation}
\begin{aligned}
\mathbb{J}_{2,1} 
\leq C_{A}  \int_{t_{n}}^{t_{n+1}}  
e^{-\tilde{\alpha}_{2}(T-s)}    \dd s  
\left(
1 + \sup_{0 \leq r \leq N}  
\left\|Y_{r} \right\|^{3\gamma-1}_{L^{6\gamma-2 }(\Omega, \mathbb{R}^{d})}  
\right) h.
\end{aligned}
\end{equation}
The estimates of $\mathbb{J}_{2,2}$ and $\mathbb{J}_{2,3}$ are in the same way. As a consequence, we take $\mathbb{J}_{2,2}$ as an example.
Then an application of the Taylor expansion with a conditional expectation argument yields that
\begin{footnotesize}
\begin{equation}
\begin{aligned}
-\mathbb{J}_{2,2} 
&= 
\tfrac{1}{2}\sum_{j=1}^{m} 
\mathbb{E}
\left[ 
\int_{t_{n}}^{t_{n+1}}
D^{2}u\big(T-s, \mathbb{Z}^{n}(s)\big) 
\Big(
g_{j}\big(\mathbb{Z}^{n}(s)\big) 
- g_{j}\big(\mathbb{Z}^{n}(t_{n})\big), 
g_{j}\big(\mathscr{P}(Y_{n})\big) 
\Big) \dd s
\right] \\
& \quad + \tfrac{1}{2}\sum_{j=1}^{m} 
\mathbb{E}
\left[
\int_{t_{n}}^{t_{n+1}}
D^{2}u\big(T-s, \mathbb{Z}^{n}(s)\big) 
\Big(
g_{j}\big(\mathbb{Z}^{n}(t_{n})\big) 
- g_{j}\big(\mathscr{P}(Y_{n})\big), 
g_{j}\big(\mathscr{P}(Y_{n})\big) 
\Big) \dd s
\right]\\
& = \tfrac{1}{2} (s-t_{n})\sum_{j=1}^{m} 
\mathbb{E}
\left[
\int_{t_{n}}^{t_{n+1}}
D^{2}u\big(T-s, \mathbb{Z}^{n}(t_{n})\big) 
\Big( 
Dg_{j}\big(\mathbb{Z}^{n}(t_{n})\big) 
F\big(\mathscr{P}(Y_{n})\big) \dd r \ , 
g_{j}\big(\mathscr{P}(Y_{n})\big) 
\Big)
\right] \\
& \quad + \tfrac{1}{2}\sum_{j=1}^{m} 
\mathbb{E}
\left[
\int_{t_{n}}^{t_{n+1}}
D^{2}u\big(T-s, \mathbb{Z}^{n}(t_{n})\big)  
\Big(
\mathcal{R}_{g_{j}} 
\big(\mathbb{Z}^{n}(s), \mathbb{Z}^{n}(t_{n}) \big), 
g_{j}\big(\mathscr{P}(Y_{n})\big) 
\Big) \dd s
\right]  \\
& \quad +\tfrac{1}{2}\sum_{j=1}^{m} 
\mathbb{E}
\left[
\int_{t_{n}}^{t_{n+1}}
\Big(
D^{2}u\big(T-s, \mathbb{Z}^{n}(s)\big) 
- D^{2}u\big(T-s, \mathbb{Z}^{n}(t_{n})\big)
\Big) 
\Big(
g_{j}(\mathbb{Z}^{n}(s)) 
- g_{j}\big(\mathbb{Z}^{n}(t_{n})\big), 
g_{j}\big(\mathscr{P}(Y_{n})\big) 
\Big) \dd s
\right] \\
& \quad + \tfrac{1}{2}\sum_{j=1}^{m} 
\mathbb{E}
\left[ 
\int_{t_{n}}^{t_{n+1}}
D^{2}u\big(T-s, \mathbb{Z}^{n}(s)\big) 
\Big(
g_{j}\big(\mathbb{Z}^{n}(t_{n})\big) 
- g_{j}\big(\mathscr{P}(Y_{n})\big), 
g_{j}\big(\mathscr{P}(Y_{n})\big) 
\Big) \dd s
\right]\\
& =: \mathbb{J}_{2,2,1} + \mathbb{J}_{2,2,2} + \mathbb{J}_{2,2,3}  + \mathbb{J}_{2,2,4},
\end{aligned}
\end{equation}
\end{footnotesize}
where we denote that, for $j\in \{1,\dots,m \}$,
\begin{equation}
\begin{aligned}
\mathcal{R}_{g_{j}} 
\Big(\mathbb{Z}^{n}(s), \mathbb{Z}^{n}(t_{n})\Big)
:=
\int_{0}^{1} 
\left[
Dg_{j} 
\left(
\mathbb{Z}^{n}(t_{n}) + r\big(\mathbb{Z}^{n}(s) 
- \mathbb{Z}^{n}(t_{n})\big) 
\right) 
- Dg_{j} \big(\mathbb{Z}^{n}(t_{n}) \big) 
\right] 
\big(\mathbb{Z}^{n}(s) - \mathbb{Z}^{n}(t_{n})\big)\dd r.
\end{aligned}
\end{equation}
Using Lemma \ref{lemma:uniform-moments-bound-of-SDEs} and Lemma \ref{lemma:estimate-of-u-and-its-derivatives} implies that
\begin{equation}
\begin{aligned}
\mathbb{J}_{2,2,1} 
&\leq C_{A} (s-t_{n}) \int_{t_{n}}^{t_{n+1}}
e^{-\tilde{\alpha}_{2}(T-s)} \dd s 
\sup_{r \in [0,T]}   
\left\|
\mathcal{P}_{\gamma-2}\left( X_{r}^{\mathbb{Z}^{n}(t_{n}) }\right)
\right\|_{L^{2\rho_{1}   }(\Omega, \mathbb{R})}   
\times \\
&\hspace{8em}\sum_{j=1}^{m}
\left\| 
Dg_{j}\big(\mathbb{Z}^{n}(t_{n})\big) 
F\big(\mathscr{P}(Y_{n})\big) 
\right\|_{L^{2\rho_{2} } (\Omega, \mathbb{R}^{d})}
\left\| g_{j}\big(\mathscr{P}(Y_{n})\big) \right\|_{L^{2\rho_{3}} (\Omega, \mathbb{R}^{d})}.  \\
\end{aligned}
\end{equation}
For the case that $\gamma \geq 2$, it follows from Assumption \ref{assumption:growth-condition-of-frechet-derivatives-of-drift-and-diffusion} and Lemma \ref{lemma:uniform-moments-bound-of-the-LTPE-method} that
\begin{equation}
\begin{aligned}
\mathbb{J}_{2,2,1} 
&\leq   C_{A} \int_{t_{n}}^{t_{n+1}}  
e^{-\tilde{\alpha}_{2}(T-s)}    \dd s   
\left(
1 + \sup_{0\leq r \leq N}
\left\|Y_{r} \right\|^{3\gamma-2}_{L^{6\gamma-4 }(\Omega, \mathbb{R}^{d})}  
\right) h
\end{aligned}
\end{equation}
where we let 
$\rho_{1}=(3\gamma-2)/(\gamma-2)$, 
$\rho_{2}=(6\gamma-4)/(3\gamma-1)$ and $\rho_{3}=(6\gamma-4)/(\gamma-1)$. 
For $1\leq \gamma \leq 2$, taking 
$\rho_{1}=\infty$, $\rho_{2}=4\gamma/(3\gamma-1)$ and 
$\rho_{3}=4\gamma/(\gamma+1)$ leads to
\begin{equation}
\begin{aligned}
\mathbb{J}_{2,2,1}
\leq  C_{A} \int_{t_{n}}^{t_{n+1}}  
e^{-\tilde{\alpha}_{2}(T-s)}    \dd s   
\left(
1 + \sup_{0\leq r \leq N}  
\left\|Y_{r} \right\|^{2\gamma}_{L^{4\gamma }(\Omega, \mathbb{R}^{d})}  
\right) h.
\end{aligned}
\end{equation}
Similarly, one gets
\begin{equation} \label{eq:estimate-of-J-2-2-2}
\begin{aligned}
\mathbb{J}_{2,2,2} 
&\leq C  \int_{t_{n}}^{t_{n+1}} 
e^{-\tilde{\alpha}_{2}(T-s)} \sup_{r \in [0,T]}  
\left\|
\mathcal{P}_{\gamma-2}
\left( X_{r}^{\mathbb{Z}^{n}(t_{n})}\right)
\right\|_{L^{2\rho_{1}   }(\Omega, \mathbb{R})}
\sum_{j=1}^{m}
\left\| 
\mathcal{R}_{g_{j}} 
\Big(\mathbb{Z}^{n}(s), \mathbb{Z}^{n}(t_{n})\Big)
\right\|_{L^{2\rho_{2} } (\Omega, \mathbb{R}^{d})} \times \\
&\hspace{28em}
\left\| 
g_{j}\big(\mathscr{P}(Y_{n})\big) 
\right\|_{L^{2\rho_{3}} (\Omega, \mathbb{R}^{d})} \dd s ,
\end{aligned}
\end{equation}   
where one obtains easily from Assumption \ref{assumption:growth-condition-of-frechet-derivatives-of-drift-and-diffusion} and Lemma \ref{lemma:holder-continuity-of-the-process-z} that, for some $\rho_{2}\geq 1$, $j\in \{1,\dots,m \}$,
\begin{equation}
\begin{aligned}
&\left\| 
\mathcal{R}_{g_{j}} 
\Big(
\mathbb{Z}^{n}(s), \mathbb{Z}^{n}(t_{n})
\Big)
\right\|_{L^{2\rho_{2} } (\Omega, \mathbb{R}^{d})}  \\
& \leq \int_{0}^{1} 
\left\| 
\Big[ Dg_{j} 
\Big(
\mathbb{Z}^{n}(t_{n}) + r\big(\mathbb{Z}^{n}(s) 
- \mathbb{Z}^{n}(t_{n})\big) 
\Big) 
- Dg_{j} \big(\mathbb{Z}^{n}(t_{n}) \big) 
\Big] 
\Big(
\mathbb{Z}^{n}(s) - \mathbb{Z}^{n}(t_{n})
\Big)\right\|_{L^{2\rho_{2} } (\Omega, \mathbb{R}^{d})}\dd r \\
& \leq C \int_0^1 
\left\|
\left(
1+\big\|
r \mathbb{Z}^{n}(s) +(1-r) \mathbb{Z}^{n}(t_{n})
\big\|
+\big\|\mathbb{Z}^{n}(t_{n})\big\|
\right)^{\frac{\gamma-3}{2}} 
\Big\| 
\mathbb{Z}^{n}(s)-\mathbb{Z}^{n}(t_{n})
\Big\|^2
\right\|_{L^{2\rho_{2}}(\Omega ; \mathbb{R})} \mathrm{d} r \\
& \leq C_{A}  
\left(
1 + \sup_{0\leq r \leq N}
\left\|Y_{r} \right\|^{\max\{ 2\gamma, \frac{5\gamma-3}{2} \}}_{L^{\max\{4\rho_{2}\gamma, \rho_{2}  (5\gamma-3)\} }(\Omega, \mathbb{R}^{d})}  
\right)h.
\end{aligned}
\end{equation}
Putting this estimate into \eqref{eq:estimate-of-J-2-2-2} with the H\"older inequality and taking the same discussion about $\gamma$ before
yield
\begin{equation}
\begin{aligned}
\mathbb{J}_{2,2,2} 
\leq C_{A}     
\int_{t_{n}}^{t_{n+1}}  
e^{-\tilde{\alpha}_{2}(T-s)}    \dd s  
\left(
1 + \sup_{0\leq r \leq N}
\left\|Y_{r} \right\|^{
\max\{\frac{5\gamma+1}{2}, \frac{7\gamma-3}{2} 4\gamma-3\}
}_{L^{\max\{5\gamma+1, 7\gamma-3, 8\gamma-6\} }(\Omega, \mathbb{R}^{d})}  
\right) h.
\end{aligned}
\end{equation}
Then, the Taylor expansion and Lemma \ref{lemma:estimate-of-u-and-its-derivatives} are used to give that, for some random variable $\widetilde{\upsilon}_{1} \in L^{\max\{6\gamma, 10\gamma-8 \}}(\Omega, \mathbb{R}^{d})$ lying between $\mathbb{Z}^{n}(s)$ and $\mathbb{Z}^{n}(t_{n})$,
\begin{equation}
\begin{aligned}
\mathbb{J}_{2,2,3} 
&=\tfrac{1}{2} \sum_{j=1}^{m} 
\mathbb{E}
\left[
\int_{t_{n}}^{t_{n+1}} 
D^{3}u(T-s, \widetilde{\upsilon}_{1}) 
\Big( 
\mathbb{Z}^{n}(s)- \mathbb{Z}^{n}(t_{n}), 
g_{j}(\mathbb{Z}^{n}(s)) - g_{j}\big(\mathbb{Z}^{n}(t_{n})\big), 
g_{j}\big(\mathscr{P}(Y_{n})\big) 
\Big) \dd s
\right] \\
& \leq C \sum_{j=1}^{m}    
\int_{t_{n}}^{t_{n+1}} e^{-\tilde{\alpha}_{3}(T-s)}  
\sup_{r \in [0,T]}  
\left\|
\mathcal{P}_{\gamma-2}
\left( X_{r}^{\widetilde{\upsilon}_{1}}\right)
\right\|_{L^{2\max\{\rho_{1}, \rho_{3}\rho_{4}\}   }(\Omega, \mathbb{R})} 
\left\| 
\mathbb{Z}^{n}(s) - \mathbb{Z}^{n}(t_{n})
\right\|_{L^{2\rho_{2} } (\Omega, \mathbb{R}^{d})} \times \\
&\hspace{10em}  
\left\| 
g_{j}(\mathbb{Z}^{n}(s)) - g_{j}\big(\mathbb{Z}^{n}(t_{n})\big)
\right\|_{L^{2\rho_{3} \rho_{5} } (\Omega, \mathbb{R}^{d})}
\left\| 
g_{j}\big(\mathscr{P}(Y_{n})\big) 
\right\|_{L^{2\rho_{3}\rho_{6} } (\Omega, \mathbb{R}^{d})} \dd s.
\end{aligned}
\end{equation}
Equipped with Lemma \ref{lemma:holder-continuity-of-the-process-z},  Assumption \ref{assumption:growth-condition-of-frechet-derivatives-of-drift-and-diffusion}  and the H\"older inequality, for $\gamma>2$, one can  choose $\rho_{1}=(5\gamma-4)/(\gamma-2)$, 
$\rho_{2}=(5\gamma-4)/\gamma$, 
$\rho_{3}=(5\gamma-4)/(3\gamma-2)$, 
$\rho_{4}=(3\gamma-2)/(\gamma-2)$, 
$\rho_{5}=(6\gamma-4)/(3\gamma-1)$ and 
$\rho_{6} = (6\gamma-4)/(\gamma+1)$ to get,
\begin{equation}
\begin{aligned}
\mathbb{J}_{2,2,3} 
&\leq C_{A} \int_{t_{n}}^{t_{n+1}}  
e^{-\tilde{\alpha}_{3}(T-s)}    \dd s    
\left(
1 
+ \sup_{0\leq r \leq N} 
\left\|
Y_{r} 
\right\|^{5\gamma-4}_{L^{10\gamma-8 }(\Omega, \mathbb{R}^{d})} 
\right) h.
\end{aligned}
\end{equation}
For $1\leq \gamma  \leq 2$, taking $\rho_{1}=\rho_{4} = \infty$, $\rho_{2}=3$, $\rho_{3}=3/2$, $\rho_{5}=4\gamma/(3\gamma-1)$ and 
$\rho_{6}=4\gamma/(\gamma+1)$ yields
\begin{equation}
\begin{aligned}
\mathbb{J}_{2,2,3} 
&\leq C_{A} \int_{t_{n}}^{t_{n+1}}  
e^{-\tilde{\alpha}_{3}(T-s)}    \dd s    
\left(
1 + \sup_{0\leq r \leq N} 
\left\|
Y_{r} 
\right\|^{3\gamma}_{L^{6\gamma }(\Omega, \mathbb{R}^{d})} 
\right) h.
\end{aligned}
\end{equation}
By Lemma \ref{lemma:estimate-of-u-and-its-derivatives}  with $q_{2}=1$, it is quite obvious that
\begin{equation}
\begin{aligned}
\mathbb{J}_{2,2,4} 
&\leq C \sum_{j=1}^{m}  \int_{t_{n}}^{t_{n+1}}
e^{-\tilde{\alpha}_{2}(T-s)} 
\sup_{r \in [0,T]}   
\left\|
\mathcal{P}_{\gamma-2}\left( X_{r}^{ \mathbb{Z}^{n}(s) }\right)
\right\|_{L^{2\rho_{1}   }(\Omega, \mathbb{R})}  
\left\| 
g_{j}\big(\mathbb{Z}^{n}(t_{n})\big) 
- g_{j}\big(\mathscr{P}(Y_{n})\big)
\right\|_{L^{2\rho_{2} } (\Omega, \mathbb{R}^{d})} \times \\
&\hspace{28em}
\left\| 
g_{j}\big(\mathscr{P}(Y_{n})\big) 
\right\|_{L^{2\rho_{3}} (\Omega, \mathbb{R}^{d})} \dd s.
\end{aligned}
\end{equation}
Following the same argument, we show that
\begin{equation}
\begin{aligned}
\mathbb{J}_{2,2,4} 
\leq C_{A} \int_{t_{n}}^{t_{n+1}}  
e^{-\tilde{\alpha}_{2}(T-s)} \dd s  
\left(
1 + \sup_{0\leq r \leq N} 
\left\|
Y_{r} 
\right\|^{\max\{\gamma+1, 2\gamma-1\}}_{L^{\max\{2\gamma+2, 4\gamma-2\} }(\Omega, \mathbb{R}^{d})}  
\right) h.
\end{aligned}
\end{equation}
Hence, by the estimates of $\mathbb{J}_{2,2,1} - \mathbb{J}_{2,2,4}$, we deduce that

\begin{equation}
\begin{aligned}
\max\left\{
\mathbb{J}_{2,2}, \mathbb{J}_{2,3} 
\right\}  
\leq C_{A}
\int_{t_{n}}^{t_{n+1}}  
e^{-\min(\tilde{\alpha}_{2}, \tilde{\alpha}_{3})(T-s) }    \dd s  \left(
1 
+ \sup_{0\leq r \leq N}
\left\|
Y_{r} 
\right\|^{\max\{3\gamma, 5\gamma-4\}}_{L^{\max\{6\gamma, 10\gamma-8\} }(\Omega, \mathbb{R}^{d})}  
\right) h,
\end{aligned}
\end{equation}
resulting in
\begin{equation}
\begin{aligned}
\mathbb{J}_{2}   
\leq C_{A}
\int_{t_{n}}^{t_{n+1}}  
e^{-\min(\tilde{\alpha}_{2}, \tilde{\alpha}_{3})(T-s) }    \dd s  \left(
1 + \sup_{0\leq r \leq N} 
\left\|
Y_{r} 
\right\|^{\max\{3\gamma, 5\gamma-4\}}_{L^{\max\{6\gamma, 10\gamma-8\} }(\Omega, \mathbb{R}^{d})}  
\right) h.
\end{aligned}
\end{equation}
Combining this with \eqref{eq:final-estimate-of-J1} leads to
\begin{equation}
\begin{aligned}
J_{3,2} 
&\leq C_{A}h   
\left(
1 + \sup_{0\leq r \leq N} 
\left\|Y_{r} \right\|^{\max\{3\gamma, 5\gamma-4\}}_{L^{\max\{6\gamma, 10\gamma-8\} }(\Omega, \mathbb{R}^{d})}  
\right)
\sum_{n=0}^{N-1}\int_{t_{n}}^{t_{n+1}}  
e^{-\min(\alpha_{1}, \tilde{\alpha}_{2}, \tilde{\alpha}_{3})(T-s) }    \dd s \\
& = C_{A}h    
\left(
1 
+ \sup_{0\leq r \leq N} 
\left\|
Y_{r} 
\right\|^{\max\{3\gamma, 5\gamma-4\}}_{L^{\max\{6\gamma, 10\gamma-8\} }(\Omega, \mathbb{R}^{d})}  
\right)
\int_{0}^{T}  
e^{-\min(\alpha_{1}, \tilde{\alpha}_{2}, \tilde{\alpha}_{3})(T-s) }    \dd s.
\end{aligned}
\end{equation}
It is known that 
\begin{equation}
\begin{aligned}
\int_{0}^{T}  
e^{-\min(\alpha_{1}, \tilde{\alpha}_{2}, \tilde{\alpha}_{3})(T-s) }   \dd s 
= \tfrac{
1-e^{-\min(\alpha_{1}, \tilde{\alpha}_{2}, \tilde{\alpha}_{3})T}
}
{
\min(\alpha_{1}, \tilde{\alpha}_{2}, \tilde{\alpha}_{3})
} 
\end{aligned}
\end{equation}
is uniformly bounded.
All in all, we are in a position to derive the estimate of $J_{3}$ as
\begin{equation} \label{eq:final-estimate-of-J2}
\begin{aligned}
J_{3} 
&\leq C_{A}  
\left(
1 + \sup_{0\leq r \leq N}  
\left\|
Y_{r} 
\right\|^{\max\{4\gamma+1, 5\gamma-4\}}_{L^{\max\{8\gamma+2, 10\gamma-8\} }(\Omega, \mathbb{R}^{d})}  
\right) h .\\
\end{aligned}
\end{equation}
Plugging \eqref{eq:estimate-of-J3}, \eqref{eq:final-estimate-of-J1} and \eqref{eq:final-estimate-of-J2} into \eqref{eq:decomposition-of-the-weak-error} gives
\begin{equation}
\big| 
\mathbb{E}\left[\varphi(Y^{x_{0}}_{N}) \right] 
- \mathbb{E}\left[\varphi(X^{x_{0}}_{T}) \right]
\big| 
\leq C_{A}
\left(
1 + \sup_{0\leq r \leq N}  
\left\|
Y_{r} 
\right\|^{\max\{4\gamma+1, 5\gamma-4\}}_{L^{\max\{8\gamma+2, 10\gamma-8\} }(\Omega, \mathbb{R}^{d})}  
\right) h,
\end{equation}
which completes the proof.
\end{proof}
To conclude, we deduce from Theorem \ref{theorem:time-independent-weak-error-analysis} that the weak convergence order of the $\pi$ and $\widetilde{\pi}$ is 1, i.e.
\begin{equation}
\left|
\int_{\mathbb{R}^d} \varphi(x) \pi(\dd x)
-\int_{\mathbb{R}^d} \varphi(x) \widetilde{\pi}( \dd x)
\right| 
\leq C_{A}h,
\end{equation}
since the constant $C_{A}$ is independent of $N$ in \eqref{equation:estimate-decomposition-of-two-measures}.

\section{Numerical experiments}
In this section, we illustrate the previous theoretical findings  through three numerical examples:
the scalar stochastic Ginzburg-Landau equation \cite{pekoldennumerical} in \textbf{Example 1}, the mean-reverting type model with super-linear coefficients \cite{liu2023backward,2013Convergence} in \textbf{Example 2} and the third is  the semi-linear stochastic partial equation (SPDE) \cite{liu2021strong,wang2023mean} in \textbf{Example 3}.

For all three numerical experiments, we consider a terminal time $T=5$, the timesteps $h=2^{-6}, 2^{-7}, 2^{-8}, 2^{-9}$ and four different choices for test function $\varphi(\cdot)$,
\begin{equation}
    \varphi(x)\in \{\arctan(\| x\|), \ e^{-\|x\|^{2}}, \ \cos(\| x\|), \ \sin(\| x\|^{2})\}.
\end{equation} The empirical mean of $\mathbb{E}\left[\varphi(X_{T}) \right]$
is estimated by a Monte Carlo approximation, involving 10,000 independent trajectories. 
It is worth noting that in \textbf{Example 2} we will test that the  terminal time $T=5$ what we have chosen is appropriate.

\textbf{Example 1.}
Consider the stochastic Ginzburg-Landau equation \cite{pekoldennumerical} from  the theory of superconductivity as follows,
\begin{equation} \label{equation:ginzburg-landau-model}
d X_t=
\left(
-X_t^3+\left(\alpha+\tfrac{1}{2} \sigma^2\right) X_t
\right) \dd t
+\sigma X_t \, \dd W_t, 
\quad \alpha, \sigma \in \mathbb{R}.
\end{equation}
Let $\alpha=-2$, $\sigma=0.5$ and $X_{0}=1$. 
Then, all conditions  in Assumptions \ref{assumption:one-side-Lipschitz-condition-for-linear-operator}-\ref{assumption:growth-condition-of-frechet-derivatives-of-drift-and-diffusion} are meet
with $\gamma = 3$ and for any $p_{0} \geq 13$.
We compute the equation \eqref{equation:ginzburg-landau-model} numerically using the explicit projected Euler method, i.e. $\theta=1$ in \eqref{introduction:LTPE-scheme},
and the \textit{exact} solutions are identified with the corresponding numerical approximations at a fine stepsize $h_{exact}=2^{-14}$.
Also, the reference lines of slope $0.5$ and $1$ are given here.
It turns out in
Figure 1 that the weak convergence rate of the approximation errors of the projected Euler method decrease at a slope close to $1$.
\begin{figure}[h] 
\centering
    \includegraphics[width=0.6\textwidth]
      {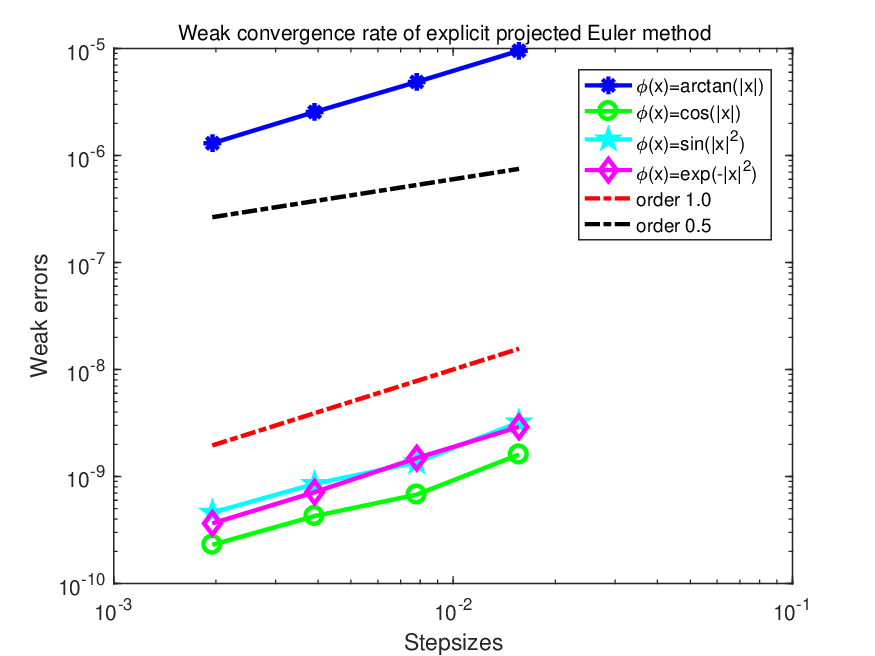}
\caption{Weak convergence rates of the explicit projected Euler method for stochastic Ginzburg-Landau model \eqref{equation:ginzburg-landau-model}}
\end{figure}

\textbf{Example 2.} Consider a scalar mean-reverting type model  with super-linear coefficients in financial and energy markets as follows,
\begin{equation} \label{eq:mean-reverting-model}
\dd X_{t} = 
\left(
b - \alpha X_{t} - \beta X^{3}_{t} 
\right) \dd t 
+ \sigma X^{2}_{t}\, \dd W_t, \quad b, \alpha, \beta, \sigma \in \mathbb{R}.
\end{equation}
Setting $b=0.3$, $\alpha=1$, $\beta=0.6$, $\sigma=0.2$ and $X_{0}=1$. The requirements from Assumptions \ref{assumption:one-side-Lipschitz-condition-for-linear-operator}-\ref{assumption:growth-condition-of-frechet-derivatives-of-drift-and-diffusion} can be verified with $\gamma=3$ and for any $p_{0} \in [13, 31/2]$.
We begin with  the probability density test of the LTPE sheme \eqref{introduction:LTPE-scheme} to 
discrete model \eqref{eq:mean-reverting-model} with three different $\theta$, $\theta=0, 0.5, 1$, at the terminal time $T=5$ using a stepsize $h=2^{-14}$, which can be found in Figure 2, respectively. 
Moreover, we put the probability density lines of such three numerical schemes with different choice of $\theta$ together and 
 directly observe that all the probability density lines are almost same so that the choice of time $T=5$ is suitable.

\begin{figure}[h]
\centering
\subfigure{
    \begin{minipage}[t]{0.45\textwidth}
    \centering
    \includegraphics[width=\textwidth]
      {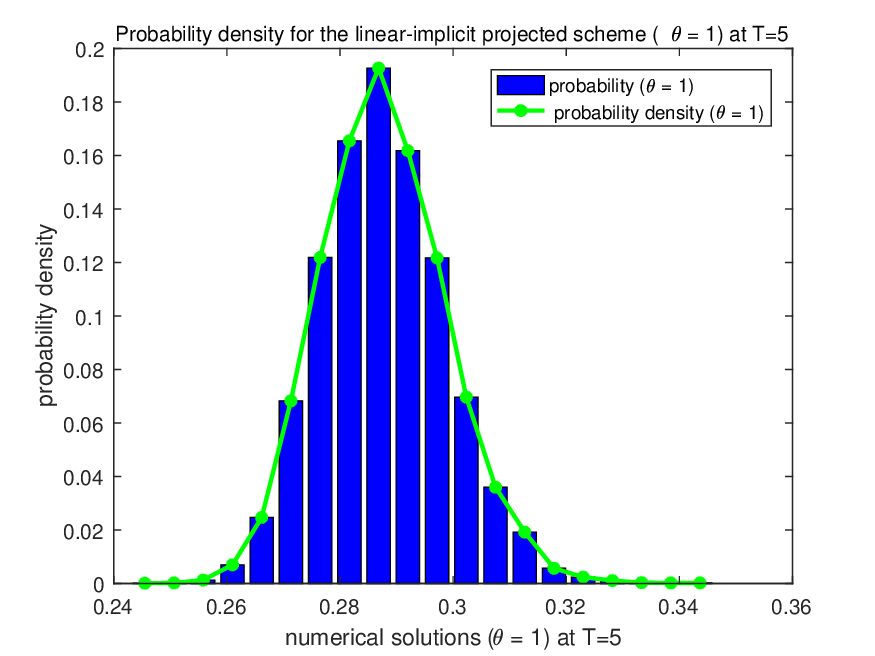}
    \end{minipage}
} 
\subfigure{
    \begin{minipage}[t]{0.45\textwidth}
    \centering
    \includegraphics[width=\textwidth]
      {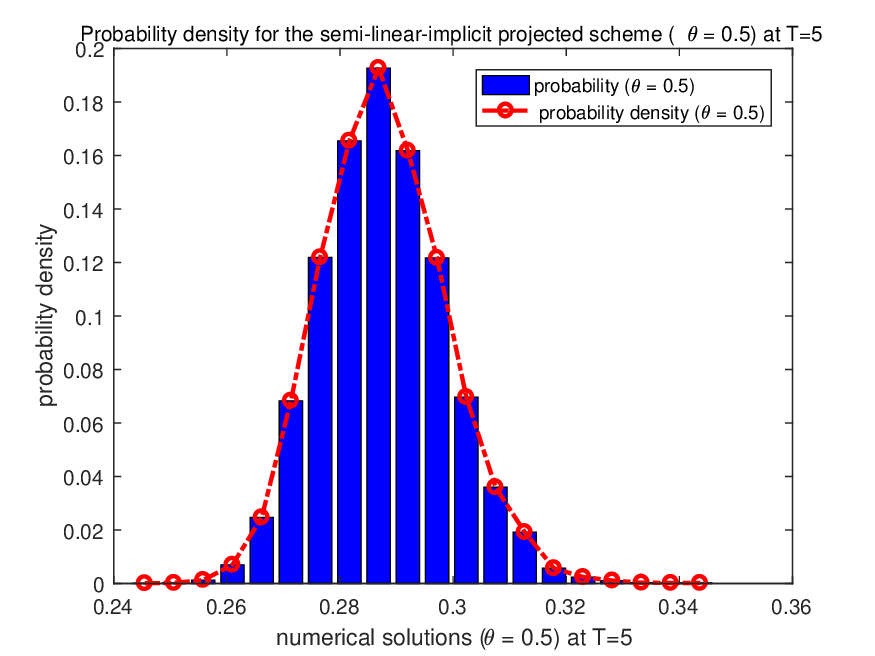}
    \end{minipage}
}
\subfigure{
    \begin{minipage}[t]{0.45\textwidth}
    \centering
    \includegraphics[width=\textwidth]
      {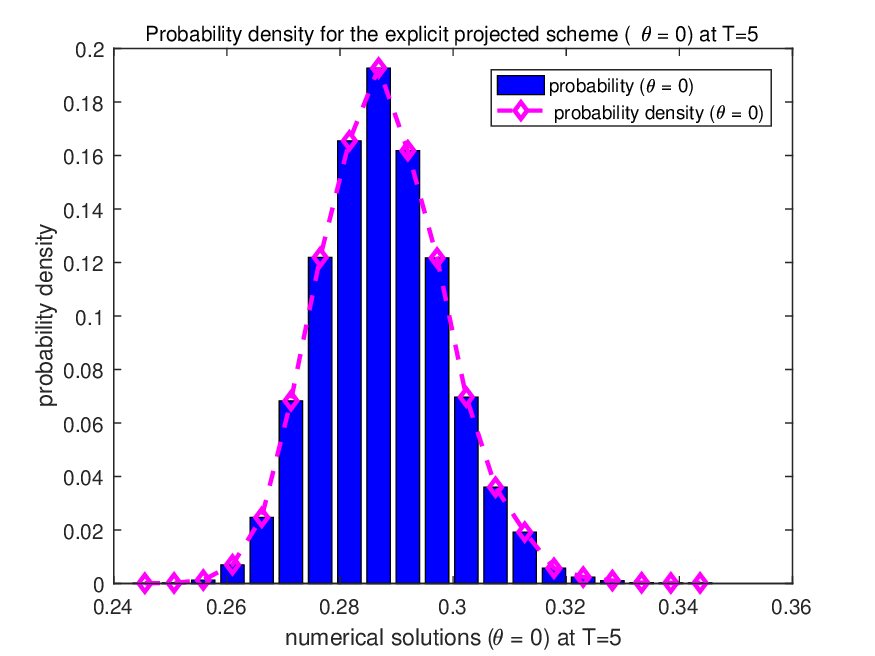}
    \end{minipage}
}
\subfigure{
    \begin{minipage}[t]{0.45\textwidth}
    \centering
    \includegraphics[width=\textwidth]
      {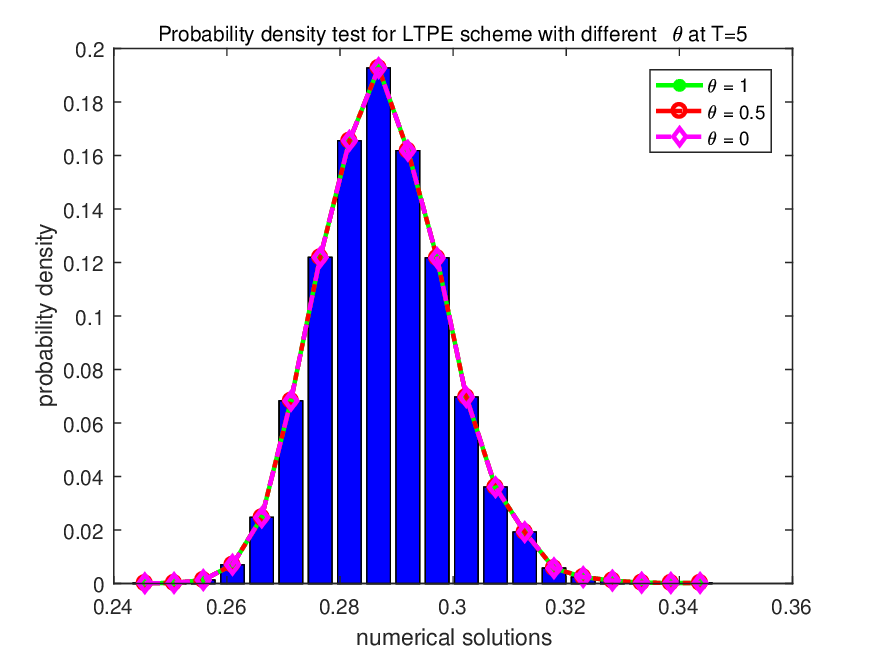}
    \end{minipage}
}
\caption{Probability density of LTPE scheme method for discretizing the mean reverting model \eqref{eq:mean-reverting-model} with different $\theta$.}
\end{figure}

We discrete this model \eqref{eq:mean-reverting-model} by the semi-linear-implicit projected Euler method (i.e. $\theta = 0.5$ in \eqref{introduction:LTPE-scheme}).
To find the \textit{exact} solutions, we discrete this model by the linear-implicit projected Euler method ($\theta = 1$ in \eqref{introduction:LTPE-scheme}) at a fine stepsize $h_{exact}=2^{-14}$. 
In Figure 3, the weak error lines have slopes close to $1$ for all cases.

\begin{figure}[h]
\centering
    \includegraphics[width=0.6\textwidth]
      {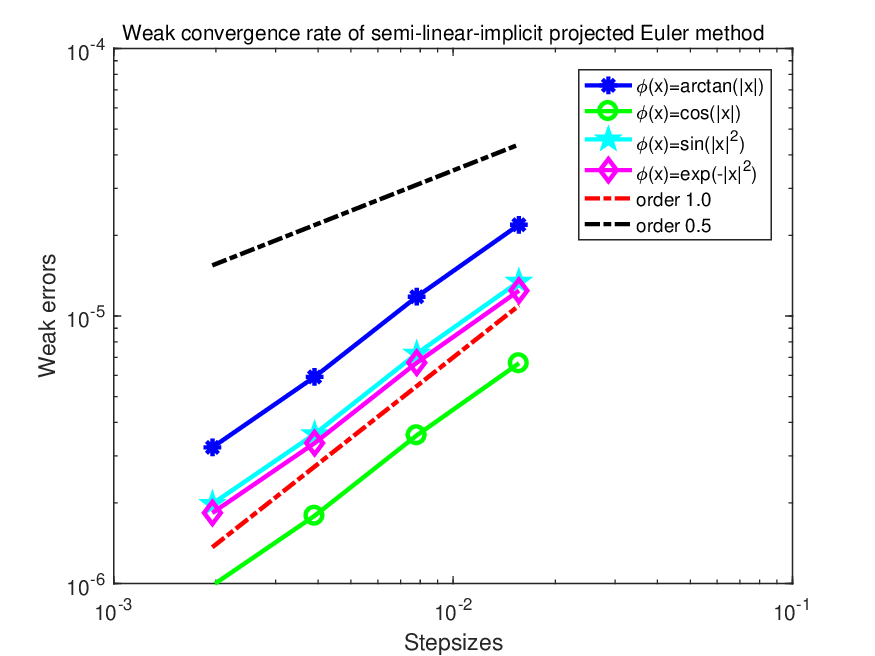}
\caption{Weak convergence rates of semi-linear-implicit projected Euler method for the mean reverting model \eqref{eq:mean-reverting-model}}
\end{figure}

\textbf{Example 3.} Consider the following semi-linear stochastic partial diﬀerential equation (SPDE),
\begin{equation} \label{equation:SPDE}
\left\{\begin{array}{l}
\mathrm{d} u(t, x)=\left[\frac{\partial^2}{\partial x^2} u(t, x)+u(t, x)-u^3(t, x)\right] \mathrm{d} t+g(u(t, x)) \mathrm{d} W_t, \quad t \in(0, T], \ x \in(0,1), \\
u(t, 0)=u(t, 1)=0 \\
u(0, x)=u_0(x)
\end{array}\right.
\end{equation}
where $g: \mathbb{R}\rightarrow \mathbb{R}$ and $W_{\cdot}: [0,T]\times \Omega \rightarrow \mathbb{R}$ is the real-valued standard Brownian motions. Such an SPDE is usually termed as the stochastic Allen-Cahn equation.
Discretizing such SPDE \eqref{equation:SPDE}
spatially by a finite difference method yields a system of SDE as below,
\begin{equation}
\label{eq:SPDE-SODE-system}
\dd  X_t = [\mathbb{A} X_t + \mathbb{F} ( X_ t ) ] \, \dd t + \mathbb{G} ( X_t ) \, \dd W_t,
\quad
t \in (0, T],
\quad
X_0 = x_0,
\end{equation}
where
$X_t 
=( X_{1,t},  X_{2,t}, \cdots, X_{K-1,t} ) ^T
:= (u(t, x_1),  u(t, x_2), \cdots, u(t, x_{K-1}) )^T
$,
$\mathbb{A} \in \mathbb{R}^{ (K-1) \times (K-1) }$,
$x_0 = (u_0 ( x_1 ), u_0 ( x_2 ), ..., u_0 ( x_{K-1} ) )^T $
and 
\begin{equation*}
\mathbb{A} 
= 
K^2 \left[\begin{array}{cccccc}
-2 & 1 & 0 & \cdots & 0 & 0   \\
1 & -2 & 1 & \cdots & 0 & 0 \\
0 & 1 & -2 & \cdots & 0 & 0 \\
 &  \cdots &  & \cdots &  &  \\
0 & 0 & 0 &  \cdots & -2 & 1 \\
0 & 0 & 0 &  \cdots & 1 & -2
\end{array}\right],\
\:
\mathbb{F} ( X ) = \left[\begin{array}{c} X_1 - (X_1)^3 \\ X_2 - (X_2)^3  \\ \vdots \\ X_{K-1} - (X_{K-1})^3 \end{array}\right],\
\:
\mathbb{G} ( X ) = \left[\begin{array}{c}  g(X_1) \\  g( X_2)   \\ \vdots \\  g ( X_{K-1} ) \end{array}\right].
\end{equation*}
Here we only focus on the temporal
discretization of the SDE system \eqref{eq:SPDE-SODE-system}.
In what follows we set $g (u) = \sin (u) + 1$ and $u_0 ( x ) \equiv 1$.
The eigenvalues $\{\lambda_{i} \}^{K-1}_{i=1}$ of the matrix $\mathbb{A}$ are $\lambda_{i}=-4K^{2} \sin^{2}(i\pi/2K) <0$ \cite{wang2023mean}, resulting in a very stiff system \eqref{eq:SPDE-SODE-system}.
Further, it is easy to check all conditions in Assumptions \ref{assumption:one-side-Lipschitz-condition-for-linear-operator}-\ref{assumption:growth-condition-of-frechet-derivatives-of-drift-and-diffusion} are fulfilled 
with $\gamma = 3$ and for any $p_{0} \geq 13$.

Here  we take the case $K=4$ as an example.
To deal with the stiffness, we take  the linear-implicit projected Euler method, i,e, $\theta=1$ in \eqref{introduction:LTPE-scheme}, to discretize \eqref{eq:SPDE-SODE-system} in time and the \textit{exact} solutions are given numerically by using a fine stepsize $h_{\text{exact}} = 2^{-14}$.
As can be observed from Figure 4, the weak convergence rate of the linear-implicit Euler method is $1$.

\begin{figure}[h]
\centering
    \includegraphics[width=0.6\textwidth]
      {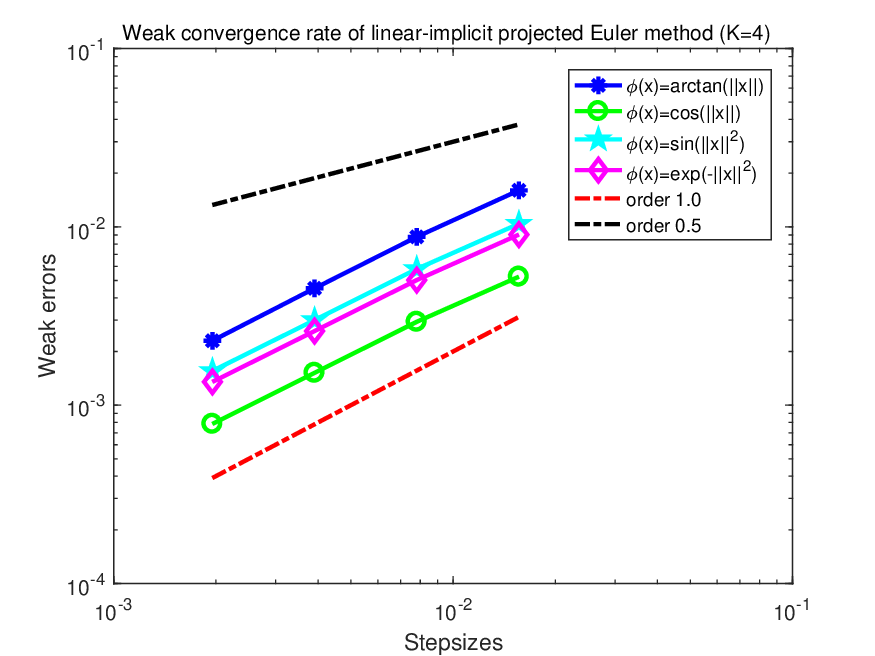}
\caption{Weak convergence rates of linear-implicit projected Euler method for model \eqref{eq:SPDE-SODE-system} (K=4).}
\end{figure}

\bibliographystyle{elsarticle-num}
\bibliography{refer}


\appendix
\section{Proof of Lemmas in Section \ref{subsection: Invariant measure of semi-linear SDE}}
\subsection{Proof of Lemma \ref{lemma:uniform-moments-bound-of-SDEs}}
\label{proof-of-lemma:uniform-moments-bound-of-SDEs}
\begin{proof}[Proof of Lemma \ref{lemma:uniform-moments-bound-of-SDEs}]

By the It\^o formula and the Cauchy-Schwarz inequality, for any $p \in [1 , \infty)$, we get
\begin{small}
\begin{equation}
\begin{aligned}
& \left(
1 + \big\| X^{-\iota, x_{0}}_{t} \big|^{2} 
\right)^{p} \\
& \leq \left( 1 + \left\| x_{0} \right\|^{2} \right)^{p} 
+ 2p \int_{-\iota}^{t} 
\left(
1 + \left\| X^{-\iota, x_{0}}_{s} \right\|^{2} 
\right)^{p-1} 
\left\langle 
X^{-\iota, x_{0}}_{s} , AX^{-\iota, x_{0}}_{s} 
\right\rangle \dd s  \\
&\quad + 2p \int_{-\iota}^{t} 
\left(
1 + \left\| X^{-\iota, x_{0}}_{s} \right\|^{2} 
\right)^{p-1} 
\left\langle X^{-\iota, x_{0}}_{s} , 
f \left( X^{-\iota, x_{0}}_{s} \right) 
\right\rangle \dd s \\
&\quad
+ 2p \int_{-\iota}^{t} 
\left(
1 + \left\| X^{-\iota, x_{0}}_{s}  \right\|^{2} 
\right)^{p-1} 
\left\langle 
X^{-\iota, x_{0}}_{s}  , 
g \left(X^{-\iota, x_{0}}_{s} \right)   \dd \widetilde{W}_{s} 
\right\rangle \\
&\quad + p(2p-1) \int_{-\iota}^{t} 
\left(
1 + \left\| X^{-\iota, x_{0}}_{s}  \right\|^{2} 
\right)^{p-1} 
\|  g (X^{-\iota, x_{0}}_{s} )  \|^{2} \dd s .
\end{aligned}
\end{equation}
\end{small}
Here we define a stopping time as 
\begin{equation}
\tau_{n} =
\inf \{ s \geq -\iota : \| X^{-\iota, x_{0}}_{s}  \| > n  \}.
\end{equation} 
Taking expectations on both sides with \eqref{equation:growth-conditoin-for-linear-operator} and Assumption \ref{assumption:coercivity-condition-for-drift-and-diffusion} shows that
\begin{footnotesize}
\begin{equation}
\begin{aligned}
&\mathbb{E} 
\left[
\left( 
1 + \big\| X^{-\iota, x_{0}}_{t \land \tau_{n}} \big\|^{2} 
\right)^{p} 
\right] \\
&\leq 
\mathbb{E} 
\left[
\left
( 1 + \left\| x_{0} \right\|^{2} 
\right)^{p} 
\right] 
- 2p \lambda_{1} 
\mathbb{E}  
\left[ 
\int_{-\iota}^{t \land \tau_{n}} 
\left(
1 + \left\| X^{-\iota, x_{0}}_{s} \right\|^{2} 
\right)^{p-1} 
\left\| X^{-\iota, x_{0}}_{s}  \right\|^{2} \dd s 
\right] 
+ p L_{1} 
\mathbb{E}  
\left[ 
\int_{-\iota}^{t \land \tau_{n}} 
\left(
1 + \left\| X^{-\iota, x_{0}}_{s}  \right\|^{2} 
\right)^{p}  \dd s 
\right]  \\
& = \mathbb{E} 
\left[
\left(
1 + \left\| x_{0} \right\|^{2} 
\right)^{p} 
\right] 
- p\left( 2\lambda_{1}-L_{1} \right) 
\mathbb{E}  
\left[
\int_{-\iota}^{t \land \tau_{n}} 
\left( 
1 + \left\| X^{-\iota, x_{0}}_{s}  \right\|^{2} 
\right)^{p}  \dd s 
\right] 
+ 
2p \lambda_{1} 
\mathbb{E}  
\left[ 
\int_{-\iota}^{t \land \tau_{n}} 
\left(
1 + \left\| X^{-\iota, x_{0}}_{s}  \right\|^{2} 
\right)^{p-1}  \dd s 
\right] .
\end{aligned}
\end{equation}
\end{footnotesize}
For $p \in [1,p_{0}]$,
using the Young inequality
\begin{equation}
a^{p-1}b \leq 
\epsilon \tfrac{p-1}{p}a^{p} 
+ \epsilon^{1-p} 
\tfrac{b^{p}}{p},
\quad \forall a,b \geq 1 \quad
\text{with} \quad
\epsilon \in 
\left(
0, \tfrac{p(2\lambda_{1}-L_{1})}{2(p-1)\lambda_{1}} 
\right)
\end{equation}
to indicate that, 
\begin{small}
\begin{equation}
\begin{aligned}
2p\lambda_{1}
\mathbb{E}  
\left[
\int_{-\iota}^{t \land \tau_{n}} 
\left(
1 + \left\| X^{-\iota, x_{0}}_{s} \right\|^{2} 
\right)^{p-1}  \dd s 
\right] 
\leq 2(p-1)\lambda_{1}\epsilon 
\mathbb{E}  
\left[ 
\int_{-\iota}^{t \land \tau_{n}} 
\left(
1 + \left\|X^{-\iota, x_{0}}_{s}\right\|^{2} 
\right)^{p}  \dd s 
\right] 
+ \int_{-\iota}^{t \land \tau_{n}} 
2\lambda_{1}\epsilon^{1-p}  \dd s.
\end{aligned}
\end{equation}
\end{small}
Then one can achieve that
\begin{equation}
\begin{aligned}
\mathbb{E} 
\left[
\left( 
1 + \left\| X^{-\iota, x_{0}}_{t \land \tau_{n}} \right\|^{2} 
\right)^{p} 
\right] 
&+ \big[
p\left( 2\lambda_{1}-L_{1} \right) 
- 2(p-1)\lambda_{1}\epsilon 
\big] 
\mathbb{E}  
\left[ 
\int_{-\iota}^{t \land \tau_{n}} 
\left( 
1 + \left\| X^{-\iota, x_{0}}_{s} \right\|^{2} 
\right)^{p}  \dd s 
\right] \\
& \leq 
\mathbb{E} 
\left[
\left(
1 + \left\| x_{0} \right\|^{2} 
\right)^{p} 
\right] 
+ \int_{-\iota}^{t \land \tau_{n}} 
2\lambda_{1}\epsilon^{1-p}  \dd s .
\end{aligned}
\end{equation}
Due to the  Fatou Lemma, let $n \rightarrow \infty$, we obtain that
\begin{equation}
\begin{aligned}
\mathbb{E} 
\left[
\left(
1 + \left\| X^{-\iota, x_{0}}_{t} \right\|^{2} 
\right)^{p} 
\right] 
&+ \big[
p\left( 2\lambda_{1}-L_{1} \right) 
- 2(p-1)\lambda_{1}\epsilon 
\big] 
\mathbb{E}  
\left[
\int_{-\iota}^{t } 
\left(
1 + \left\| X^{-\iota, x_{0}}_{s} \right\|^{2} 
\right)^{p}  \dd s 
\right] \\
& \leq 
\mathbb{E} 
\left[
\left(
1 + \left\| x_{0} \right\|^{2} 
\right)^{p} 
\right] 
+ \int_{-\iota}^{t } 2\lambda_{1}\epsilon^{1-p} \dd s.
\end{aligned}
\end{equation}
As $p\left( 2\lambda_{1}-L_{1} \right) - 2(p-1)\lambda_{1}\epsilon > 0$, the proof can be done by Lemma \ref{lemma:ito}. 
\end{proof}
\subsection{Proof of Lemma \ref{lemma:contractivity-of-sde}}
\label{proof-of-lemma:contractivity-of-sde}
\begin{proof}[Proof of Lemma \ref{lemma:contractivity-of-sde}]
For brevity, we define 
\begin{equation}
\Delta x _{t} = X^{-\iota, x^{(1)}_{0}}_{t}  
- X^{-\iota, x^{(2)}_{0}}_{t} , \quad 
\Delta f _{t} = 
f \Big( X^{-\iota, x^{(1)}_{0}}_{t}  \Big) 
- f\Big( X^{-\iota, x^{(2)}_{0}}_{t} \Big), \quad 
\Delta g _{t} = g \Big( X^{-\iota, x^{(1)}_{0}}_{t}  \Big) 
- g\Big( X^{-\iota, x^{(2)}_{0}}_{t} \Big).
\end{equation}
With the stopping time defined as follows,
\begin{equation}
\overline{\tau}^{(1)}_{n} =
\inf \{ s \geq -\iota : \| X^{-\iota, x^{(1)}_{0}}_{s}  \| > n \quad
\text{or} \quad
\| X^{-\iota, x^{(2)}_{0}}_{s}  \| > n
\},
\end{equation} 
one obtains by using the It\^o formula,
\begin{equation}
\begin{aligned}
&e^{c_{1}p (t \land \overline{\tau}^{(1)}_{n})
} 
\left\|
\Delta x _{t \land \overline{\tau}^{(1)}_{n}} 
\right\|^{2p} \\
&\leq \| \Delta x _{0} \|^{2p} 
+ c_{1}p\int_{-\iota}^{
t\land \overline{\tau}^{(1)}_{n}
} e^{c_{1}p  s} 
\| \Delta x _{s}  \| ^{2p} \dd s 
+ 2p \int_{-\iota}^{
t \land \overline{\tau}^{(1)}_{n}
} e^{c_{1}p  s} 
\| \Delta x _{s}  \| ^{2p-2} 
\left\langle 
\Delta x _{s} , A \Delta x _{s} 
\right\rangle \dd s  \\
& \quad + 2p \int_{-\iota}^{
t \land \overline{\tau}^{(1)}_{n}
} e^{c_{1}p s} 
\| \Delta x _{s}  \| ^{2p-2} 
\left\langle 
\Delta x _{s} , \Delta f _{s} 
\right\rangle \dd s 
+ p (2p-1) 
\int_{-\iota}^{
t \land \overline{\tau}^{(1)}_{n}
} e^{c_{1}p  s} 
\| \Delta x _{s}  \| ^{2p-2} 
\left\| \Delta g_{s} \right\|^{2} \dd s .
\end{aligned}
\end{equation}
Hence, by taking expectations on both sides with Assumptions \ref{assumption:one-side-Lipschitz-condition-for-linear-operator}, \ref{assumption:coupled-monotoncity-for-drift-and-diffusion} and the Fatou lemma, we reach that, for some positive constant $c_{1}\in (0,2\lambda_{1}-L_{2}]$,
\begin{equation}
\begin{aligned}
\mathbb{E} 
\left[ 
e^{c_{1}p (t \land \overline{\tau}^{(1)}_{n})} \left\| 
\Delta x _{t \land \overline{\tau}^{(1)}_{n}} \right\|^{2p}
\right] 
&\leq \mathbb{E} 
\left[ \| \Delta x _{0} \|^{2p} \right] 
+ \underbrace{
p \big[c_{1} - \left( 2\lambda_{1} - L_{2} \right) \big] 
\int_{-\iota}^{
t \land \overline{\tau}^{(1)}_{n}
} e^{c_{1}p s} 
\| \Delta x _{s}  \| ^{2p} \dd s}_{\leq 0},\\
\end{aligned}
\end{equation}
leading to
\begin{equation}
\begin{aligned}
\mathbb{E} 
\left[ 
\left\| 
\Delta x _{t } \right\|^{2p}
\right] 
\leq 
e^{-c_{1}p t } 
\mathbb{E} 
\left[ \| \Delta x _{0} \|^{2p} \right].
\end{aligned}
\end{equation}
The proof is completed.
\end{proof}

\subsection{Proof of Lemma \ref{lemma:cauchy-sequence-of-sode}}
\label{proof-of-lemma:cauchy-sequence-of-sode}
\begin{proof}[Proof of Lemma \ref{lemma:cauchy-sequence-of-sode}]
Let 
\begin{equation}
\begin{aligned}
&\Delta X_{t} =  X^{-s_{1}, x_{0}}_{t}  - X^{-s_{2}, x_{0}}_{t}, \ 
\Delta \bar{f}_{t} 
= f\left(X^{-s_{1}, x_{0}}_{t} \right) - f\left(X^{-s_{2}, x_{0}}_{t} \right), \
\Delta \bar{g}
= g\left(X^{-s_{1}, x_{0}}_{t} \right) - g\left(X^{-s_{2}, x_{0}}_{t} \right).
\end{aligned}
\end{equation}
With reference to the proof of Lemma \ref{lemma:contractivity-of-sde}, 
setting the stopping time as 
\begin{equation}
\overline{\tau}^{(2)}_{n} 
=
\inf \{ s \geq -s_{2} : \| X^{-s_{2}, x_{0}}_{s}  \| > n  \},
\end{equation}
by the It\^o formula, we deduce that
\begin{footnotesize}
\begin{equation}
\begin{aligned}
&\mathbb{E} 
\left[
e^{c_{2}p(
t \land \overline{\tau}^{(2)}_{n}+s_{2}
)
} 
\left\| 
\Delta X _{
t \land \overline{\tau}^{(2)}_{n}
} 
\right\|^{2p} 
\right] \\
&\leq \mathbb{E}
\left[
\| \Delta X _{-s_{2}} \|^{2p}
\right] 
+ c_{2}p 
\mathbb{E}
\left[
\int_{-s_{2}}^{
t \land \overline{\tau}^{(2)}_{n}
} e^{c_{2}p(s+s_{2})} 
\| \Delta X _{s}  \| ^{2p} \dd s 
\right]  
+ 2p \mathbb{E}
\left[
\int_{-s_{2}}^{
t \land \overline{\tau}^{(2)}_{n}
} e^{c_{2}p(s+s_{2})} 
\| \Delta X _{s}  \| ^{2p-2} 
\left\langle 
\Delta X _{s} , A \Delta X _{s} 
\right\rangle \dd s 
\right] \\
& \quad + 2p\mathbb{E}
\left[ 
\int_{-s_{2}}^{t \land \overline{\tau}^{(2)}_{n}} 
e^{c_{2}p(s+s_{2})} 
\| \Delta X _{s}  \| ^{2p-2} 
\left\langle 
\Delta X _{s} , \Delta \bar{f}_{s} 
\right\rangle \dd s 
\right]
+p (2p-1)\mathbb{E}
\left[  
\int_{-s_{2}}^{t \land \overline{\tau}^{(2)}_{n}} 
e^{c_{2}p(s+s_{2})} 
\| \Delta X _{s}  \| ^{2p-2} 
\left\| \Delta \bar{g}_{s} \right\|^{2} \dd s 
\right].
\end{aligned}
\end{equation}
\end{footnotesize}
According to Lemma \ref{lemma:uniform-moments-bound-of-SDEs}, one can directly obtain 

\begin{equation}
\mathbb{E}\left[\| \Delta X _{-s_{2}} \|^{2p}\right] = \mathbb{E}\left[\| X^{-s_{1}, x_{0}}_{-s_{2}}-x_{0} \|^{2p}\right] \leq  C\mathbb{E} \left[ \left( 1 + \left\| x_{0} \right\|^{2} \right)^{p} \right].
\end{equation}
Taking this with Assumption \ref{assumption:one-side-Lipschitz-condition-for-linear-operator},  Assumption \ref{assumption:coupled-monotoncity-for-drift-and-diffusion} and the Fatou lemma into account yields
\begin{small}
\begin{equation}
\begin{aligned}
e^{c_{2}p(
t \land \overline{\tau}^{(2)}_{n}
+s_{2}
)
} 
\mathbb{E} 
\left[
\Big\| 
\Delta X _{
t \land \overline{\tau}^{(2)}_{n}
} 
\Big\|^{2p} 
\right]
&\leq C\mathbb{E} 
\left[ 
\left( 1 + \left\| x_{0} \right\|^{2} \right)^{p} 
\right] 
+ 
\underbrace{
p \left[
c_{2} - \left( 2\lambda_{1} - L_{2} \right) 
\right] 
\int_{-s_{2}}^{
t \land \overline{\tau}^{(2)}_{n}
} e^{c_{2}p(s+s_{2})} \| \Delta X _{s}  \| ^{2p} \dd s 
}_{\leq 0},\\
\end{aligned}
\end{equation}
\end{small}
resulting in 
\begin{equation}
\begin{aligned}
\mathbb{E} 
\left[
\left\| 
\Delta X _{t} 
\right\|^{2p} 
\right]
\leq 
C
e^{-c_{2}p(t+s_{2})
} 
\mathbb{E} 
\left[
\left(
1 + \left\| x_{0} \right\|^{2} 
\right)^{p} 
\right]
\end{aligned}
\end{equation}
The proof is completed.
\end{proof}
\section{Proof of Lemmas in Section \ref{subsection:Invariant-measure-of-the-numerical-scheme} }
\subsection{Proof of Lemma \ref{lemma:necessary-estimates}}
\label{proof-of-lemma:necessary-estimates}
\begin{proof}[Proof of Lemma \ref{lemma:necessary-estimates}]
The first and the second  estimates are obvious from \eqref{equation:growth-of-the-drift-f} and \eqref{eq:projected-operator}. Equipped with these above, by Assumption \ref{assumption:coercivity-condition-for-drift-and-diffusion}
and the Cauchy-Schwarz inequality, one obtains
\begin{equation}
\begin{aligned}
(2p_{0}-1)\|g(\mathscr{P}(x)) \|^{2}  
&\leq L_{1} (1 + \| \mathscr{P}(x)\|^{2}) 
- 2 \big\langle 
\mathscr{P}(x), f\big(\mathscr{P}(x)\big)  
\big\rangle \\
& \leq  
L_{1} (1 + \| \mathscr{P}(x)\|^{2}) 
+ 2 \big\| \mathscr{P}(x) \big\| \cdot
\big\|
f\big(\mathscr{P}(x)\big)
\big\|  \\
& \leq L_{1} (1 + \| \mathscr{P}(x)\|^{2}) + 2C_{f}h^{-\frac{1}{2}} \|\mathscr{P}(x)\|.
\end{aligned}
\end{equation}
Owing to the fact that $p_{0} \in [1, \infty)$, the proof of the third estimate in \eqref{equation:first-three-estimates-in-lemma} is completed. Then taking $p$-th square on both sides yields
\begin{equation}
\begin{aligned}
(2p_{0}-1)^{p}\|g(\mathscr{P}(x)) \|^{2p} 
 &\leq L_{1} (1 + \| \mathscr{P}(x)\|^{2})^{p} + 2p C_{f} L_{1}^{p-1}
 h^{-\frac{1}{2}} (1 + \| \mathscr{P}(x)\|^{2})^{p-1} \|\mathscr{P}(x) \| \\
&\quad + \sum_{i=2}^{p} 
\mathscr{C}_{p}^{i} 
(2C_{f})^{i} L_{1}^{p-i} 
h^{-\frac{i}{2}} 
(1 + \| \mathscr{P}(x)\|^{2})^{p-i}  
\|\mathscr{P}(x) \|^{i},
\end{aligned}
\end{equation}
where $\mathscr{C}_{p}^{i}:= \frac{p!}{i! (p-i)!}$.
As we have claimed, $\|\mathscr{P}(x) \| \leq h^{-\frac{1}{2\gamma}}$ and 
$\|\mathscr{P}(x) \|^{i} \leq (1 + \|\mathscr{P}(x) \|^{2}) ^{\frac{i}{2}}$ for any $i \geq 2$, so that
\begin{small}
\begin{equation}
\begin{aligned}
& (2p_{0}-1)^{p}\|g(\mathscr{P}(x)) \|^{2p} \\
&\leq L_{1}^{p} 
(1 + \| \mathscr{P}(x)\|^{2})^{p} 
+ 2 C_{f} p L_{1}^{p-1} 
h^{- \left(\frac{1}{2} + \frac{1}{2\gamma} \right)} 
(1 + \| \mathscr{P}(x)\|^{2})^{p-1}  
+ h^{-\frac{p}{2}} (1 + \| \mathscr{P}(x)\|^{2})^{p-1}  \sum_{i=2}^{p}  \mathscr{C}_{p}^{i} 
(2C_{f})^{i} L_{1}^{p-i}   \\
& \leq  L_{1}^{p} 
(1 + \| \mathscr{P}(x)\|^{2})^{p} 
+ C h^{-\frac{p}{2}} (1 + \| \mathscr{P}(x)\|^{2})^{p-1},
\end{aligned}
\end{equation}
\end{small}
where $C=C(L_{1}, C_{f}, p)=\sum_{i=1}^{p}  \mathscr{C}_{p}^{i}  (2C_{f})^{i} L_{1}^{p-i}$. 

Turning now on to the estimate \eqref{eq:estimates-in-lemma-projected-Lipschitz},
the proof of the first estimate in  \eqref{eq:estimates-in-lemma-projected-Lipschitz} can be found from Lemma 6.2 in \cite{beyn2016stochastic}.
For the second estimate, we know from \eqref{introduction:LTPE-scheme},  Assumption \ref{assumption:growth-condition-of-frechet-derivatives-of-drift-and-diffusion}  and Lemma \ref{lemma:necessary-estimates} that,
\begin{equation}
\begin{aligned}
\left\| f\big(\mathscr{P}(x)\big) - f\big(\mathscr{P}(y)\big)
\right\| 
& \leq C_{1} \big(
1 + \|\mathscr{P}(x) \|^{\gamma-1} 
+ \|\mathscr{P}(y) \|^{\gamma-1}  
\big) 
\| \mathscr{P}(x) - \mathscr{P}(x)  \| \\
& \leq C_{1} (1 + 2h^{-\frac{\gamma-1}{2\gamma}}) 
\| \mathscr{P}(x) - \mathscr{P}(x)  \| \\
&\leq \lambda_{f}  h^{-\frac{\gamma- 1}{2\gamma}} \|
\mathscr{P}(x) - \mathscr{P}(x)  \|,
\end{aligned}
\end{equation}
where one can follow the first estimate to complete the proof.
The proof is completed.
\end{proof}
%
%
\subsection{Proof of Lemma \ref{lemma:contractivity-of-the-theta-linear-projected-Euler-method}}
\label{proof-of-lemma:contractivity-of-the-theta-linear-projected-Euler-method}
\begin{proof}[Proof of Lemma \ref{lemma:contractivity-of-the-theta-linear-projected-Euler-method}]
Shortly, we denote
\begin{equation}
\begin{aligned}
&\Delta Y_{n} = Y_n^{(1)} - Y_n^{(2)}, 
\quad \Delta \mathscr{P}(Y_{n}) 
= \mathscr{P}(Y^{(1)}_{n}) - \mathscr{P}(Y^{(2)}_{n}),  
\quad \Delta \widetilde{f}_{n} 
= f\big(\mathscr{P}(Y^{(1)}_{n})\big)
-f\big(\mathscr{P}(Y^{(2)}_{n})\big),\\
& \Delta \widetilde{g}_{n} 
= g\big(\mathscr{P}(Y^{(1)}_{n})\big)
- g\big(\mathscr{P}(Y^{(2)}_{n})\big) . \\
\end{aligned}
\end{equation}
It is apparent to show that
\begin{equation}
\begin{aligned}
\Delta Y_{n+1} - \theta A \Delta Y_{n+1}h = \Delta \mathscr{P}(Y_{n}) + 
(1-\theta) A \Delta \mathscr{P}(Y_{n}) h+
\Delta \widetilde{f}_{n} h + \Delta \widetilde{g}_{n} \Delta W_{n}.
\end{aligned}
\end{equation}
Taking square on both sides, we then take expectations and follow Assumption \ref{assumption:one-side-Lipschitz-condition-for-linear-operator} and Assumption \ref{assumption:coupled-monotoncity-for-drift-and-diffusion} to imply
\begin{equation}
\begin{aligned}
&(1 + 2\theta \lambda_{1}h + \theta^{2} h ^{2}) \mathbb{E} \left[ \left\|\Delta Y_{n+1}  \right\|^{2} \right] \\
&\leq \left[ 1- 2(1-\theta)\lambda_{1}h \right] \mathbb{E} \left[ \| \Delta \mathscr{P}(Y_{n}) \|^{2} \right] +
(1-\theta)^{2} h^{2} \mathbb{E} \left[  \| A \Delta \mathscr{P}(Y_{n}) \|^{2} \right] 
+  h^{2} \mathbb{E} \left[  \|  \Delta \widetilde{f}_{n} \|^{2} \right]  \\
& \quad +
h \mathbb{E} \left[ \left\|\Delta \widetilde{g}_{n} \right\|^{2} \right]+ 2h   \mathbb{E} \big[  \langle \Delta \mathscr{P}(Y_{n}) , \Delta \widetilde{f}_{n}  \rangle \big] + 2 (1-\theta) h^{2}   \mathbb{E} \big[  \langle A \Delta \mathscr{P}(Y_{n}) , \Delta \widetilde{f}_{n}  \rangle \big].
\end{aligned}
\end{equation}
Using the Cauchy-Schwarz inequality leads to
\begin{equation}
\begin{aligned}
2 (1-\theta) h^{2}   
\mathbb{E} 
\big[
\langle A \Delta \mathscr{P}(Y_{n}) , 
\Delta \widetilde{f}_{n}  \rangle 
\big] 
\leq 
2(1-\theta) h^{2} 
\mathbb{E} 
\left[  
\big\| A \Delta \mathscr{P}(Y_{n})\big\| \cdot
\big\|\Delta \widetilde{f}_{n} \big\| 
\right] .
\end{aligned}
\end{equation}
Recalling Assumption \ref{assumption:one-side-Lipschitz-condition-for-linear-operator}, Assumption \ref{assumption:coupled-monotoncity-for-drift-and-diffusion}, Lemma \ref{lemma:necessary-estimates}, we can obtain that
\begin{equation}
\begin{aligned}
&(1 + 2\theta \lambda_{1}h ) \mathbb{E} \left[ \left\|\Delta Y_{n+1}  \right\|^{2} \right] \\
&\leq \left[ 1- 2(1-\theta)\lambda_{1}h \right] \mathbb{E} \left[ \| \Delta \mathscr{P}(Y_{n}) \|^{2} \right] + 
2h   \mathbb{E} \big[  \langle \Delta \mathscr{P}(Y_{n}) , \Delta \widetilde{f}_{n}  \rangle \big] + (2p_{1}-1)h \mathbb{E} \left[ \left\|\Delta \widetilde{g}_{n} \right\|^{2} \right] \\
& \quad + (1-\theta)^{2} \lambda^{2}_{d}h^{2} \mathbb{E} \left[  \|  \Delta \mathscr{P}(Y_{n}) \|^{2} \right] + 2(1-\theta) \lambda_{d} \lambda_{f}  h^{1+\frac{\gamma+1}{2\gamma}} \mathbb{E} \left[  \|  \Delta \mathscr{P}(Y_{n}) \|^{2} \right]  + \lambda_{f}^{2}h^{1+\frac{1}{\gamma}} \mathbb{E} \left[  \|  \Delta \mathscr{P}(Y_{n}) \|^{2} \right] \\
& \leq \left\{ 1- 2(1-\theta)\lambda_{1}h +  L_{2}h + \left[ (1-\theta)\lambda_{d}h^{\frac{1}{2}} + \lambda_{f} h^{\frac{1}{2\gamma}} \right]^{2} h\right\} \mathbb{E} \left[ \| \Delta Y_{n} \|^{2} \right].
\end{aligned}
\end{equation}
Here we choose a conditional constant $\kappa \in (0,1)$
such that
\begin{equation}
(1-\theta)\lambda_{d}h^{\frac{1}{2}} < \kappa \sqrt{2\lambda_{1} - L_{2}}, 
\quad 
\lambda_{f} h^{\frac{1}{2\gamma}} < (1-\kappa) \sqrt{2\lambda_{1} - L_{2}},
\end{equation}
which leads to 
\begin{equation}
h \in 
\left(
0, \min\left\{
\tfrac{\kappa^{2}(2\lambda_{1}-L_{2})}{(1- \theta)^{2}\lambda^{2}_{d}}, \tfrac{(1-\kappa)^{2\gamma}(2\lambda_{1}-L_{2})^{\gamma}}{(\lambda_{f})^{2\gamma}},{1} 
\right\} 
\right),
\end{equation}
to ensure
\begin{equation}
   2\lambda_{1} - L_{2}- \left[ (1-\theta)\lambda_{d}h^{\frac{1}{2}} + \lambda_{f} h^{\frac{1}{2\gamma}} \right]^{2}  > 0.
\end{equation}
As a result, there exists some positive constant $\widetilde{C}_{1}$ satisfying
\begin{equation}
    \widetilde{C}_{1} \in \left(0, \tfrac{2\lambda_{1} - L_{2}- \left[ (1-\theta)\lambda_{d}h^{\frac{1}{2}} + \lambda_{f} h^{\frac{1}{2\gamma}} \right]^{2}}{1+2\theta L_{1}h} \right)
\end{equation}
such that
\begin{equation}
\begin{aligned}
\mathbb{E} 
\left[ 
\left\|\Delta Y_{n+1}  \right\|^{2} 
\right] 
& \leq 
(1-\widetilde{C}_{1}h)
\mathbb{E} 
\left[
\left\|\Delta Y_{n}  \right\|^{2} 
\right] 
\leq e^{-\widetilde{C}_{1}t_{n+1}}
\mathbb{E} 
\left[ \big\|x_{0}^{(1)} - x_{0}^{(2)} \big\|^{2} \right].
\end{aligned}
\end{equation}
The proof is completed.


\end{proof}
\section{Proof of Lemmas in Section \ref{subsection: Time-independent weak error analysis}}
\subsection{Proof of Lemma \ref{lemma:differentiability-of-solutions}}
\label{proof-of-lemma:differentiability-of-solutions}
\begin{proof} [Proof of Lemma \ref{lemma:differentiability-of-solutions}]
The existence of the mean-square derivatives up to the third order can be proved in a similar way as shown in \cite{cerrai2001second}.
Based on our assumptions, we would like to obtain the time-independent estimate of the derivatives of solutions $\{X^{x}_{t} \}_{t\in [0,T]}$ given by \eqref{eq:semi-linear-SODE} with respect to the initial condition $x$.

For simplicity, we denote that
\begin{equation}
\eta^{v_{1}} (t,x):=\mathcal{D}X^{x}_{t} v_{1}, 
\quad 
\xi^{v_{1}, v_{2}}(t,x) 
:=  \mathcal{D}^{2}X^{x}_{t}  (v_{1}, v_{2}), 
\quad
\zeta^{v_{1}, v_{2}, v_{3}}(t,x) 
:= \mathcal{D}^{3}X^{x}_{t}( v_{1}, v_{2}, v_{3}).
\end{equation}
\textbf{Part I: estimate of the first variation process}

For the first variation process of SDE \eqref{eq:semi-linear-SODE}, we have
\begin{equation} \label{eq:1st-variation-of-SODE}
\dd \eta^{v_{1}} (t,x)  
= DF( X^{x}_t)\eta^{v_{1}} (t,x) \, \dd t 
+ \sum^{m}_{j=1}
Dg_{j}( X^{x}_t) \eta^{v_{1}} (t,x) \, \dd W_{j, t}, \quad 
\eta^{v_{1}} (0,x) = v_{1}.
\end{equation}
Define a stopping time as 
\begin{equation}
\widetilde{\tau}^{(1)}_{n} =
\inf \left\{
s \in [0,t] : \|\eta^{v_{1}} (s,x) \| > n
\quad 
\text{or}
\quad
\left\| 
X^{x}_{s}   
\right\|  > n  
\right\}.
\end{equation} 
Using the It\^o formula, the Cauchy-Schwarz inequality and \eqref{eq:enhanced-coupled-condition} to attain that, for some $q_{1} \in [1,p_{1}]$ and $\delta >0$,
\begin{equation} \label{eq:ito-formula-for-1st-variation-process}
\begin{aligned}
&\mathbb{E} 
\left[
e^{(\alpha q_{1} t) \land \widetilde{\tau}^{(1)}_{n}}
\|\eta^{v_{1}} (t \land \widetilde{\tau}^{(1)}_{n},x) \| ^{2q_{1}} 
\right] \\
&\leq   
\mathbb{E} 
\left[  \|v_{1}\| ^{2q_{1}}
\right] 
+
\alpha q_{1}
\mathbb{E} 
\left[
\int_{0}^{t \land \widetilde{\tau}^{(1)}_{n}} 
e^{\alpha q_{1} s}
\|\eta^{v_{1}} (s,x) \| ^{2q_{1}} 
\dd s
\right]  \\
&\quad
+ 2q_{1} \mathbb{E} 
\left[
\int_{0}^{t \land \widetilde{\tau}^{(1)}_{n}} 
e^{\alpha q_{1} s}
\|\eta^{v_{1}} (s,x) \| ^{2q_{1}-2} 
\big\langle 
\eta^{v_{1}} (s,x) ,  DF( X^{x}_s) \eta^{v_{1}} (s,x) 
\big\rangle \dd s 
\right]  \\
& \quad + q_{1}(2q_{1}-1) 
\sum^{m}_{j=1}
\mathbb{E} 
\left[
\int_{0}^{t \land \widetilde{\tau}^{(1)}_{n}}    
e^{\alpha q_{1} s}
\|\eta^{v_{1}} (s,x) \| ^{2q_{1}-2} 
\left\| 
Dg_{j}( X^{x}_s)  \eta^{v_{1}} (s,x)   
\right\|^{2} 
\dd s
\right] \\
& \leq 
\mathbb{E} 
\left[
\|v_{1}\| ^{2q_{1}}
\right].
\end{aligned}
\end{equation}
Hence, by Fatou lemma and taking $\alpha_{1}=\alpha /2$, the estimate above leads to 
\begin{equation} \label{eq:uniform-estimate-of-1st-variation-process}
\begin{aligned}
\mathbb{E} 
\left[
\|\eta^{v_{1}} (t,x) \| ^{2q_{1}}
\right]  
& \leq e^{-2\alpha_{1}q_{1} t} 
\mathbb{E} 
\left[ 
\left\| v_{1}\right\|^{2q_{1}} 
\right].
\end{aligned}
\end{equation}

\noindent
\textbf{Part II: estimate of the second variation process}

For the second variation process of  SDE \eqref{eq:semi-linear-SODE}, we then acquire that,
\begin{equation} \label{eq:2nd-variation-of-SODE}
\begin{aligned}
\dd \xi^{v_{1}, v_{2}} (t,x)  
&= \Big(
DF( X^{x}_t)\xi^{v_{1}, v_{2}} (t,x) 
+ D^{2}F( X^{x}_t)\big(
\eta^{v_{1}} (t,x), \eta^{v_{2}} (t,x) 
\big)  
\Big) \dd t\\
&\quad + 
\sum^{m}_{j=1}
\Big(
Dg_{j}( X^{x}_t)\xi^{v_{1}, v_{2}} (t,x) 
+ D^{2}g_{j}( X^{x}_t)\big( 
\eta^{v_{1}} (t,x), \eta^{v_{2}} (t,x) 
\big)   
\Big) \dd W_{j,t}, \quad \xi^{v_{1}, v_{2}} (0,x_{0}) = 0.
\end{aligned}
\end{equation}
Following the same idea as \eqref{eq:ito-formula-for-1st-variation-process}, 
we begin with the definition of the stopping time as follows,
\begin{equation}
\widetilde{\tau}^{(2)}_{n} =
\inf \left\{
s \in [0,t] : \|\xi^{v_{1}, v_{2}} (s,x) \| > n
\quad 
\text{or}
\quad
\left\| 
X^{x}_{s}   
\right\|  > n  
\right\}.
\end{equation} 
Then,
for some $q_{2}\in [1,q_{1})$ and $\delta >0$, by taking the It\^o formula, one will arrive at
\begin{footnotesize}
\begin{equation} \label{eq:ito-formula-for-2nd-variation-process}
\begin{aligned}
&\mathbb{E} 
\left[
\left(
\delta + \| \xi^{v_{1}, v_{2}} (t \land \widetilde{\tau}^{(2)}_{n},x)\|^{2} 
\right)^{q_{2}} 
\right]\\
&\leq \delta^{q_{2}} 
+ 2q_{2}
\mathbb{E} 
\left[
\int_{0}^{t \land \widetilde{\tau}^{(2)}_{n}} 
\left(
\delta + \| \xi^{v_{1}, v_{2}} (s,x)\|^{2} 
\right)^{q_{2}-1} 
\big\langle 
\xi^{v_{1}, v_{2}} (s,x) ,  DF( X^{x}_s) \xi^{v_{1}, v_{2}} (s,x)
\big\rangle \dd s 
\right] \\
& \quad + 
2q_{2}\mathbb{E} 
\left[
\int_{0}^{t \land \widetilde{\tau}^{(2)}_{n}} 
\underbrace{
\left(
\delta + \| \xi^{v_{1}, v_{2}} (s,x)\|^{2} 
\right)^{q_{2}-1} 
\Big\langle 
\xi^{v_{1}, v_{2}} (s,x) ,  
D^{2}F( X^{x}_s)\big( 
\eta^{v_{1}} (s,x), \eta^{v_{2}} (s,x) 
\big) 
\Big\rangle 
}_{=: \mathbb{T}_{1}} \dd s 
\right] \\
& \quad
+ q_{2}(2q_{2}-1) \sum^{m}_{j=1}
\mathbb{E} 
\left[
\int_{0}^{t \land \widetilde{\tau}^{(2)}_{n}}
\underbrace{
\left(
\delta + \| \xi^{v_{1}, v_{2}} (s,x)\|^{2} 
\right)^{q_{2}-1} 
\Big\|
Dg_{j}( X^{x}_s)\xi^{v_{1}, v_{2}} (s,x) 
+ D^{2}g_{j}( X^{x}_s)\big(
\eta^{v_{1}} (s,x), \eta^{v_{2}} (s,x) 
\big)  
\Big\|^{2}
}_{=: \mathbb{T}_{2}} \dd s 
\right]. \\
\end{aligned}
\end{equation}
\end{footnotesize}
The Cauchy-Schwarz inequality and the Young inequality are used several times to indicate that, for two positive constants $\tilde{\epsilon}_{1}, \tilde{\epsilon}_{2}$ with  $\tilde{\epsilon}_{1} \in (0, q_{2}\alpha)$ and $\tilde{\epsilon}_{2} \in (0, (p_{1}-q_{2})/q_{2}]$, 
\begin{equation}
\begin{aligned}
\mathbb{T}_{1} 
\leq 
\tfrac{\tilde{\epsilon}_{1}}{2}
\left(
\delta + \| \xi^{v_{1}, v_{2}} (s,x)\|^{2} 
\right)^{q_{2}}  
+ C_{\tilde{\epsilon}_{1}}  
\left\| 
D^{2}F( X^{x}_s)
\big( 
\eta^{v_{1}} (s,x), \eta^{v_{2}} (s,x) 
\big)  
\right\|^{2q_{2}},  \\
\end{aligned}
\end{equation}
and
\begin{equation}
\begin{aligned}
\mathbb{T}_{2} 
&\leq (1+\tilde{\epsilon}_{2})  
\left(
\delta + \| \xi^{v_{1}, v_{2}} (s,x)\|^{2} 
\right)^{q_{2}-1} 
\big\|
Dg_{j}( X^{x}_s)\xi^{v_{1}, v_{2}} (s,x)  
\big\|^{2} \\
& \quad + C_{\tilde{\epsilon}_{2}} 
\left(
\delta + \| \xi^{v_{1}, v_{2}} (s,x)\|^{2} 
\right)^{q_{2}-1}  
\big\| 
D^{2}g_{j}( X^{x}_s)
\big(
\eta^{v_{1}} (s,x), \eta^{v_{2}} (s,x) 
\big)  
\big\|^{2}  \\
& \leq (1+\tilde{\epsilon}_{2})  
\left(
\delta + \| \xi^{v_{1}, v_{2}} (s,x)\|^{2} 
\right)^{q_{2}-1} 
\big\|
Dg_{j}( X^{x}_s)\xi^{v_{1}, v_{2}} (s,x)  
\big\|^{2} 
+ \tfrac{\tilde{\epsilon}_{1}}{2}
\left(
\delta + \| \xi^{v_{1}, v_{2}} (s,x)\|^{2} 
\right)^{q_{2}} \\
& \quad + C_{\tilde{\epsilon}_{1}, \tilde{\epsilon}_{2}}  
\big\| 
D^{2}g_{j}( X^{x}_s)
\big(
\eta^{v_{1}} (s,x), \eta^{v_{2}} (s,x) 
\big)  
\big\|^{2q_{2}} .
\end{aligned}
\end{equation}
With these estimates above, we obtain that
\begin{equation} 
\begin{aligned}
&\mathbb{E} 
\left[
\left(\delta + \| \xi^{v_{1}, v_{2}} (t \land \widetilde{\tau}^{(2)}_{n},x)\|^{2} \right)^{q_{2}}
\right]\\
&\leq \delta^{q_{2}} 
+ 2q_{2}
\mathbb{E} 
\left[
\int_{0}^{t \land \widetilde{\tau}^{(2)}_{n}} 
\left(
\delta + \| \xi^{v_{1}, v_{2}} (s,x)\|^{2} 
\right)^{q_{2}-1} 
\Big(
\big\langle
\xi^{v_{1}, v_{2}} (s,x) , 
DF( X^{x}_s) \xi^{v_{1}, v_{2}} (s,x) 
\big\rangle 
\Big) \dd s 
 \right] \\
 & \quad 
+  q_{2}(2q_{2}-1)(1+\tilde{\epsilon}_{2})
\sum^{m}_{j=1}
\mathbb{E} 
\left[
\int_{0}^{t \land \widetilde{\tau}^{(2)}_{n}} 
\left(
\delta + \| \xi^{v_{1}, v_{2}} (s,x)\|^{2} 
\right)^{q_{2}-1} 
\big\|
    Dg_{j}( X^{x}_s)\xi^{v_{1}, v_{2}} (s,x)  
\big\|^{2} \dd s 
 \right] \\
& \quad 
+  2\tilde{\epsilon}_{1}
\mathbb{E} 
\left[
\int_{0}^{t \land \widetilde{\tau}^{(2)}_{n}} 
\left(
\delta + \| \xi^{v_{1}, v_{2}} (s,x)\|^{2} 
\right)^{q_{2}}  \dd s 
\right] 
+ C_{\tilde{\epsilon}_{1}} 
\mathbb{E} 
\left[
\int_{0}^{t \land \widetilde{\tau}^{(2)}_{n}} 
\big\| 
D^{2}F( X^{x}_s)
\big( 
\eta^{v_{1}} (s,x), \eta^{v_{2}} (s,x) 
\big)  
\big\|^{2q_{2}}  \dd s 
\right]  \\
&\quad + C_{\tilde{\epsilon}_{1}, \tilde{\epsilon}_{2}} 
\sum^{m}_{j=1}
\mathbb{E} 
\left[
\int_{0}^{t \land \widetilde{\tau}^{(2)}_{n}} 
\big\| 
D^{2}g_{j}( X^{x}_s)\big(
\eta^{v_{1}} (s,x), \eta^{v_{2}} (s,x) 
\big)  
\big\|^{2q_{2}}  \dd s 
\right] . \\
\end{aligned}
\end{equation}
With Assumption \ref{assumption:growth-condition-of-frechet-derivatives-of-drift-and-diffusion}, Lemma \ref{lemma:uniform-moments-bound-of-SDEs}, the H\"older inequality and \eqref{eq:uniform-estimate-of-1st-variation-process} in mind, and recall the definition $\mathcal{P}_{\cdot}(\cdot)$ in \eqref{eqn:Pgammax} its property in \eqref{eqn:D2FPestimate},  we are able to show that, for some positive constants $\rho_{1}, \rho_{2}, \rho_{3}$ satisfying $1/\rho_{1} + 1/\rho_{2} + 1/\rho_{3}= 1$ and \eqref{eq:range-of-p0-in-lemma:differentiability-of-solutions},
\begin{equation} 
\begin{aligned}
&\left\|
D^{2}F( X^{x}_s)\big(
\eta^{v_{1}} (s,x), \eta^{v_{2}} (s,x) 
\big) 
\right\|_{L^{2q_{2}}(\Omega, \mathbb{R}^{d})} \\
&\leq C 
\Big\|
\mathcal{P}_{\gamma-2}(X^{x}_s) \cdot 
\|\eta^{v_{1}} (s,x) \| \cdot 
\|\eta^{v_{2}} (t,x) \|
\Big\|_{L^{2q_{2}}(\Omega, \mathbb{R})} \\
& \leq C 
\left\|
\mathcal{P}_{\gamma-2}(X^{x}_s)
\right\|_{L^{2\rho_{1} q_{2}  }(\Omega, \mathbb{R})} 
\left\| 
\eta^{v_{1}} (s,x) 
\right\|_{L^{2\rho_{2} q_{2}} (\Omega, \mathbb{R}^{d})}
\left\|
\eta^{v_{2}} (s,x) 
\right\|_{L^{2\rho_{3} q_{2}} (\Omega, \mathbb{R}^{d})} \\
& \leq  C e^{-2\alpha_{1} s}
\left\| 
\mathcal{P}_{\gamma-2}(X^{x}_s)
\right\|_{L^{2\rho_{1} q_{2}  }(\Omega, \mathbb{R})} 
\left\|
v_{1}
\right\|_{L^{2\rho_{2} q_{2}} (\Omega, \mathbb{R}^{d})}
\left\| 
v_{2} 
\right\|_{L^{2\rho_{3} q_{2}} (\Omega, \mathbb{R}^{d})}.
\end{aligned}
\end{equation}
Following the same idea and taking into account \eqref{eqn:D2GPestimate}, one can get, for $j\in \{1, \dots,m \}$ and $\rho_{1}q_{2}(\gamma-3) \leq 2p_{0}$,
\begin{equation} 
\begin{aligned}
&\left\|
D^{2}g_{j}( X^{x}_s)\big(
\eta^{v_{1}} (s,x), \eta^{v_{2}} (s,x) 
\big) 
\right\|_{L^{2q_{2}}(\Omega, \mathbb{R}^{d})} \\
& \leq C \left\|
\mathcal{P}_{(\gamma-3)/2}(X^{x}_s) 
\right\|_{L^{2\rho_{1} q_{2} }(\Omega, \mathbb{R})} 
\left\| 
\eta^{v_{1}} (s,x) 
\right\|_{L^{2\rho_{2} q_{2}} (\Omega, \mathbb{R}^{d})}
\left\| 
\eta^{v_{2}} (s,x) 
\right\|_{L^{2\rho_{3} q_{2}} (\Omega, \mathbb{R}^{d})}\\
& \leq  C e^{-2\alpha_{1} s} 
\left\|
\mathcal{P}_{(\gamma-3)/2}(X^{x}_s) 
\right\|_{L^{2\rho_{1} q_{2} }(\Omega, \mathbb{R})} 
\left\|
v_{1} 
\right\|_{L^{2\rho_{2} q_{2}} (\Omega, \mathbb{R}^{d})}
\left\| 
v_{2} 
\right\|_{L^{2\rho_{3} q_{2}} (\Omega, \mathbb{R}^{d})}.
\end{aligned}
\end{equation}
Combining these estimates with \eqref{eq:enhanced-coupled-condition}, \eqref{eq:range-of-p0-in-lemma:differentiability-of-solutions}, Lemma \ref{lemma:uniform-moments-bound-of-SDEs} and  the Young inequality and the monotonicity of $\mathcal{P}_{\cdot}(X_{s}^x)$ yields, 
\begin{small}
\begin{equation} 
\begin{aligned}
&\mathbb{E} 
\left[
\left(
\delta + \| \xi^{v_{1}, v_{2}} (t \land \widetilde{\tau}^{(2)}_{n},x)\|^{2} 
\right)^{q_{2}} 
\right]\\
&\leq \delta^{q_{2}} 
- (2q_{2}\alpha - \tilde{\epsilon}_{1})
\mathbb{E} 
\left[
\int_{0}^{t \land \widetilde{\tau}^{(2)}_{n}} 
\left(
\delta + \| \xi^{v_{1}, v_{2}} (s,x)\|^{2} 
\right)^{q_{2}} 
 \dd s \right]  
+  2q_{2}\alpha \delta
\mathbb{E} 
\left[
\int_{0}^{t \land \widetilde{\tau}^{(2)}_{n}} 
\left(
\delta + \| \xi^{v_{1}, v_{2}} (s,x)\|^{2} 
\right)^{q_{2}-1}  \dd s 
\right]   \\
& \quad
+ C_{\tilde{\epsilon}_{1}, \tilde{\epsilon}_{2}} 
\sup_{r \in [0,T]}  
\left\| 
\mathcal{P}_{\gamma-2}(X^{x}_r) 
\right\|^{2q_{2}}_{L^{2\rho_{1} q_{2} }(\Omega, \mathbb{R})} 
\left\|
v_{1} 
\right\|^{2q_{2}}_{L^{2\rho_{2} q_{2}} (\Omega, \mathbb{R}^{d})}
\left\| 
v_{2} 
\right\|^{2q_{2}}_{L^{2\rho_{3} q_{2}} (\Omega, \mathbb{R}^{d})}  
\int_{0}^{t \land \widetilde{\tau}^{(2)}_{n}} e^{-2q_{2}\alpha_{1} s} \dd s \\
& \leq 
\delta^{q_{2}} 
- \left(
2q_{2}\alpha - 2\tilde{\epsilon_{1}}
\right)
\mathbb{E}
\left[
\int_{0}^{t \land \widetilde{\tau}^{(2)}_{n}} 
\left(
\delta + \| \xi^{v_{1}, v_{2}} (s,x)\|^{2} 
\right)^{q_{2}} 
\dd s 
\right]  
+ C_{\tilde{\epsilon_{2}}}  
\left(
2q_{2}\delta
\right)^{q_{2}}
  \\
& \quad 
+ C_{\tilde{\epsilon_{1}}, \tilde{\epsilon_{2}}}  
\sup_{r \in [0,T]}  
\left\| 
\mathcal{P}_{\gamma-2}(X^{x}_r) 
\right\|^{2q_{2}}_{L^{2\rho_{1} q_{2} }(\Omega, \mathbb{R})} 
\left\| 
v_{1}
\right\|^{2q_{2}}_{L^{2\rho_{2} q_{2}} (\Omega, \mathbb{R}^{d})}
\left\| 
v_{2} 
\right\|^{2q_{2}}_{L^{2\rho_{3} q_{2}} (\Omega, \mathbb{R}^{d})}  
\int_{0}^{t \land \widetilde{\tau}^{(2)}_{n}} 
e^{-2q_{2}\alpha_{1} s} \dd s .
\end{aligned}
\end{equation}
\end{small}
Setting $\delta \rightarrow 0^{+}$,
one observes
\begin{equation}
\begin{aligned}
&\mathbb{E} 
\left[
\left(
\delta + \| \xi^{v_{1}, v_{2}} (t \land \widetilde{\tau}^{(2)}_{n},x_{0})\|^{2} 
\right)^{q_{2}} 
\right]
+ 
\left(
2q_{2}\alpha - 2\tilde{\epsilon_{1}}
\right)
\mathbb{E}
\left[
\int_{0}^{t \land \widetilde{\tau}^{(2)}_{n}} 
\left(
\delta + \| \xi^{v_{1}, v_{2}} (s,x)\|^{2} 
\right)^{q_{2}} 
\dd s 
\right]\\
& \leq
C_{\tilde{\epsilon_{1}}, \tilde{\epsilon_{2}}}  
\sup_{r \in [0,T]}  
\left\| 
\mathcal{P}_{\gamma-2}(X^{x}_r) 
\right\|^{2q_{2}}_{L^{2\rho_{1} q_{2} }(\Omega, \mathbb{R})} 
\left\| 
v_{1}
\right\|^{2q_{2}}_{L^{2\rho_{2} q_{2}} (\Omega, \mathbb{R}^{d})}
\left\| 
v_{2} 
\right\|^{2q_{2}}_{L^{2\rho_{3} q_{2}} (\Omega, \mathbb{R}^{d})}  
\int_{0}^{t \land \widetilde{\tau}^{(2)}_{n}} 
e^{-2q_{2}\alpha_{1} s} \dd s .
\end{aligned}
\end{equation}
By virtue of \eqref{eq:range-of-p0-in-lemma:differentiability-of-solutions}, Lemma \ref{lemma:ito}, Lemma \ref{lemma:uniform-moments-bound-of-SDEs} and the Fatou lemma, one will arrive at, 
\begin{equation} \label{eq:uniform-estimate-of-2nd-variation-process-of-sde}
\begin{aligned}
&\mathbb{E} 
\left[
\| \xi^{v_{1}, v_{2}} (t,x)\|^{2q_{2}} 
\right]\\
&\leq C_{\tilde{\epsilon_{1}}, \tilde{\epsilon_{2}}}    
\sup_{r \in [0,T]} 
\left\|
\mathcal{P}_{\gamma-2}(X^{x}_r) 
\right\|^{2q_{2}}_{L^{2\rho_{1} q_{2} }(\Omega, \mathbb{R})}
\left\| 
v_{1} 
\right\|^{2q_{2}}_{L^{2\rho_{2} q_{2}} (\Omega, \mathbb{R}^{d})}
\left\|
v_{2} 
\right\|^{2q_{2}}_{L^{2\rho_{3} q_{2}} (\Omega, \mathbb{R}^{d})} 
\int_{0}^{t} 
e^{-2(q_{2}\alpha_{1}-\tilde{\epsilon}_{1} )(t-s)} 
e^{-2q_{2}\alpha_{1} s} \dd s \\
& \leq
C_{
\tilde{\epsilon}_{1}, \tilde{\epsilon}_{2}
} 
e^{-\alpha_{2}q_{2}t} 
\sup_{r \in [0,T]}
\left\|
\mathcal{P}_{\gamma-2}(X^{x}_r) 
\right\|^{2q_{2}}_{L^{2\rho_{1} q_{2} }(\Omega, \mathbb{R})}
\left\|
v_{1} 
\right\|^{2q_{2}}_{L^{2\rho_{2} q_{2}} (\Omega, \mathbb{R}^{d})}
\left\|
v_{2} 
\right\|^{2q_{2}}_{L^{2\rho_{3} q_{2}} (\Omega, \mathbb{R}^{d})},
\end{aligned}
\end{equation}
where $\alpha_{2}:= (q_{2}\alpha_{1}-\tilde{\epsilon}_{1})/q_{2}$.

\noindent
\textbf{Part III: estimate of the third variation process} 

For the third variation process of the SDE \eqref{eq:semi-linear-SODE}, we get 
\begin{equation} 
\begin{aligned}
&\dd \zeta^{v_{1}, v_{2}, v_{3}} (t,x)  \\
&= \Big(
DF(X^{x}_{t}) \zeta^{v_{1}, v_{2}, v_{3}} (t,x)  
+ D^{2}F( X^{x}_t)\big(
\eta^{v_{1}} (t,x), \xi^{v_{2}, v_{3}} (t,x) 
\big) 
+  D^{2}F( X^{x}_t)\big(
\xi^{v_{1}, v_{3}} (t,x), \eta^{v_{2}} (t,x) 
\big)\\
& \quad 
+  D^{2}F( X^{x}_t)\big(
\xi^{v_{1}, v_{2}} (t,x), \eta^{v_{3}} (t,x)
\big) 
+ D^{3}F( X^{x}_t)\big(
\eta^{v_{1}} (t,x) , \eta^{v_{2}} (t,x) , \eta^{v_{3}} (t,x)
\big) 
\Big) \dd t \\
& \quad + \sum^{m}_{j=1}
\Big( 
Dg_{j}(X^{x}_{t}) \zeta^{v_{1}, v_{2}, v_{3}} (t,x) 
+ D^{2}g_{j}( X^{x}_t)\big(
\eta^{v_{1}} (t,x), \xi^{v_{2}, v_{3}} (t,x) 
\big)
+  D^{2}g_{j}( X^{x}_t)\big(  
\xi^{v_{1}, v_{3}} (t,x), \eta^{v_{2}} (t,x) 
\big) \\
& \quad 
+  D^{2}g_{j}( X^{x}_t)\big(
\xi^{v_{2}, v_{3}} (t,x), \eta^{v_{1}} (t,x) 
\big) 
+ D^{3}g_{j}( X^{x}_t)\big(   
\eta^{v_{1}} (t,x) , \eta^{v_{2}} (t,x) , \eta^{v_{3}} (t,x)
\big) 
\Big) \dd W_{j,t}\\
&=: \Big( DF(X^{x}_{t}) \zeta^{v_{1}, v_{2}, v_{3}} (t,x) + H(X^{x}_{t}) \Big) \dd t 
+ \sum^{m}_{j=1}
\Big(
Dg_{j}(X^{x}_{t}) \zeta^{v_{1}, v_{2}, v_{3}} (t,x) 
+ G_{j}(X^{x}_{t}) 
\Big) \dd W_{j,t}.
\end{aligned}
\end{equation}
Similarly, 
given the stopping time as below,
\begin{equation}
\widetilde{\tau}^{(3)}_{n} =
\inf \left\{
s \in [0,t] : \|\zeta^{v_{1}, v_{2}, v_{3}} (s,x) \| > n
\quad 
\text{or}
\quad
\left\| 
X^{x}_{s}   
\right\|  > n  
\right\},
\end{equation} 
due to the It\^o formula and the Young inequality, we obtain that, for  positive constants $\tilde{\epsilon}_{3} \in (0, 2p_{1}-2)$ and $\tilde{\epsilon}_{4} \in (0,\alpha)$,
\begin{footnotesize}
\begin{equation} \label{eq:ito-expansion-of-the-3rd-variation-process}
\begin{aligned}
&\mathbb{E} 
\left[
\|\zeta^{v_{1}, v_{2}, v_{3}} 
(t \land \widetilde{\tau}^{(3)}_{n},x) \| ^{2}
\right] \\
&\leq 2 \mathbb{E} 
\left[
\int_{0}^{t \land \widetilde{\tau}^{(3)}_{n}} 
\big\langle \zeta^{v_{1}, v_{2}, v_{3}} (s,x) 
,  DF( X^{x}_s) \zeta^{v_{1}, v_{2}, v_{3}} 
(s,x) \big\rangle   
\dd s
\right] 
+ 2 \mathbb{E} 
\left[
\int_{0}^{t \land \widetilde{\tau}^{(3)}_{n}}
\big\langle 
\zeta^{v_{1}, v_{2}, v_{3}} (s,x) ,  H( X^{x}_s) 
\big\rangle  
\dd s
\right] \\
& \quad 
+ \sum^{m}_{j=1} 
\mathbb{E} 
\left[
\int_{0}^{t \land \widetilde{\tau}^{(3)}_{n}} 
\left\|  
Dg_{j}( X^{x}_s)  \zeta^{v_{1}, v_{2}, v_{3}} (s,x) 
+ G_{j}( X^{x}_s) 
\right\|^{2} \dd s
\right]  \\
& \leq  
\mathbb{E} 
\left[
\int_{0}^{t \land \widetilde{\tau}^{(3)}_{n}}
2\big\langle
\zeta^{v_{1}, v_{2}, v_{3}} (s,x) ,  DF( X^{x}_s) \zeta^{v_{1}, v_{2}, v_{3}} (s,x) 
\big\rangle 
+  (1+\tilde{\epsilon}_{3})  \sum^{m}_{j=1}
\left\|
Dg_{j}( X^{x}_s)  \zeta^{v_{1}, v_{2}, v_{3}} (s,x)  
\right\|^{2}
\dd s
\right] \\
& \quad + \tilde{\epsilon}_{4} 
\mathbb{E} 
\left[
\int_{0}^{t \land \widetilde{\tau}^{(3)}_{n}}
\|\zeta^{v_{1}, v_{2}, v_{3}} (s,x) \| ^{2}
\dd s
\right] 
+ C_{\tilde{\epsilon}_{4}} 
\mathbb{E} 
\left[ 
\int_{0}^{t \land \widetilde{\tau}^{(3)}_{n}}
\|H( X^{x}_s) \| ^{2}
\dd s
\right]
+ C_{\tilde{\epsilon}_{3}} \sum^{m}_{j=1}
\mathbb{E} 
\left[
\int_{0}^{t \land \widetilde{\tau}^{(3)}_{n}}
\|G_{j}( X^{x}_s) \| ^{2}
\dd s
\right] \\
&\leq -(\alpha-\tilde{\epsilon}_{4}) 
\mathbb{E} 
\left[
\int_{0}^{t \land \widetilde{\tau}^{(3)}_{n}}
\|\zeta^{v_{1}, v_{2}, v_{3}} (s,x) \| ^{2}
\dd s
\right] 
+ C_{\tilde{\epsilon}_{4}} 
\mathbb{E} 
\left[
\int_{0}^{t \land \widetilde{\tau}^{(3)}_{n}}
\|H( X^{x}_s) \| ^{2} \dd s
\right]
+ C_{\tilde{\epsilon}_{3}} \sum^{m}_{j=1}
\mathbb{E} 
\left[
\int_{0}^{t \land \widetilde{\tau}^{(3)}_{n}}
\|G_{j}( X^{x}_s) \| ^{2} \dd s
\right]. \\
\end{aligned}
\end{equation}
\end{footnotesize}
The elementary inequality is used to imply that
\begin{equation} \label{eq:remaining-term-H}
\begin{aligned}
&\|H( X^{x}_s) \|_{L^{2}(\Omega, \mathbb{R}^{d})} \\
&\leq 
\left\|
D^{2}F( X^{x}_s)\big(
\eta^{v_{1}} (s,x), \xi^{v_{2}, v_{3}} (s,x) 
\big) 
\right\|_{L^{2}(\Omega, \mathbb{R}^{d})} 
+  \left\|
D^{2}F( X^{x}_s)\big(
\xi^{v_{1}, v_{3}} (s,x), \eta^{v_{2}} (s,x)  
\big)
\right\|_{L^{2}(\Omega, \mathbb{R}^{d})}\\
& \quad + 
\left\|
D^{2}F( X^{x}_s)\big(
\xi^{v_{1}, v_{2}} (s,x), \eta^{v_{3}} (s,x) 
\big)   
\right\|_{L^{2}(\Omega, \mathbb{R}^{d})} 
+ \left\| D^{3}F( X^{x}_s)\big(
\eta^{v_{1}} (s,x), \eta^{v_{2}} (s,x), \eta^{v_{3}} (s,x) 
\big)\right\|_{L^{2}(\Omega, \mathbb{R}^{d})}.
\end{aligned}
\end{equation}
In the following, we first show that the analysis of the first term to the third term on the right hand of \eqref{eq:remaining-term-H} is equivalent. Taking the first term and the second term as examples,  by Assumption \ref{assumption:growth-condition-of-frechet-derivatives-of-drift-and-diffusion}, \eqref{eq:uniform-estimate-of-1st-variation-process}, \eqref{eq:uniform-estimate-of-2nd-variation-process-of-sde}, Lemma \ref{lemma:uniform-moments-bound-of-SDEs} and the H\"older inequality, for some positive constants $\rho_{1}$, $\rho_{2}$, $\rho_{3}$, $\rho_{4}$, $\rho_{5}$ and $\rho_{6}$ with $1/\rho_{1} + 1/\rho_{2} + 1/\rho_{3}= 1$, $1/\rho_{4} + 1/\rho_{5} + 1/\rho_{6}= 1$ and \eqref{eq:range-of-p0-in-lemma:differentiability-of-solutions}, one derives, 
\begin{equation}  \label{eq:first-term-in-remaining-term-H}
\begin{aligned}
&\left\| 
D^{2}F( X^{x}_s)\big(
\eta^{v_{1}} (s,x), \xi^{v_{2}, v_{3}} (s,x) 
\big) 
\right\|_{L^{2}(\Omega, \mathbb{R}^{d})} \\
&\leq C  \Big\|
\mathcal{P}_{\gamma-2}(X^{x}_s)  \cdot 
\|\eta^{v_{1}} (s,x) \| \cdot 
\|\xi^{v_{2}, v_{3}} (s,x)  \| 
\Big\|_{L^{2}(\Omega, \mathbb{R})} \\
& \leq C 
\left\| 
\mathcal{P}_{\gamma-2}(X^{x}_s) 
\right\|_{L^{2\rho_{1}} (\Omega, \mathbb{R})}
\|\eta^{v_{1}} (s,x) \|_{L^{2\rho_{2}} (\Omega, \mathbb{R}^{d})}
\|\xi^{v_{2}, v_{3}} (s,x)  \|_{L^{2\rho_{3}} (\Omega, \mathbb{R}^{d})} \\
& \leq Ce^{-(\alpha_{1}+\alpha_{2}) s} 
\sup_{r \in [0,T]}
\left\|
\mathcal{P}_{\gamma-2}(X^{x}_r) 
\right\|_{L^{2 \max\{ \rho_{1}, \rho_{3}\rho_{4}\}}(\Omega, \mathbb{R})}
\|v_{1} \|_{L^{2\rho_{2}} (\Omega, \mathbb{R}^{d})} 
\|v_{2} \|_{L^{2\rho_{3}\rho_{5}} (\Omega, \mathbb{R}^{d})} 
\|v_{3} \|_{L^{2\rho_{3}\rho_{6}} (\Omega, \mathbb{R}^{d})} .\\
\end{aligned}
\end{equation}
For the second term, we choose another series of constants $\kappa_{1}$, $\kappa_{2}$, $\kappa_{3}$, $\kappa_{4}$, $\kappa_{5}$ and $\kappa_{6}$ with $1/\kappa_{1}+ 1/\kappa_{2} + 1/\kappa_{3} = 1$, $1/\kappa_{4} + 1/\kappa_{5} + 1/\kappa_{6}= 1$ and \eqref{eq:range-of-p0-in-lemma:differentiability-of-solutions} to show, 
\begin{equation}  \label{eq:second-term-in-remaining-term-H}
\begin{aligned}
&\left\|
D^{2}F( X^{x}_s)\big(
\xi^{v_{1}, v_{3}} (s,x), \eta^{v_{2}} (s,x) 
\big) 
\right\|_{L^{2}(\Omega, \mathbb{R}^{d})} \\
&\leq C  
\Big\|
\mathcal{P}_{\gamma-2}(X^{x}_s)  \cdot 
\| \xi^{v_{1}, v_{3}} (s,x) \| \cdot 
\|\eta^{v_{2}} (s,x)  \| 
\Big\|_{L^{2}(\Omega, \mathbb{R})} \\
& \leq C 
\left\| 
\mathcal{P}_{\gamma-2}(X^{x}_s) 
\right\|_{L^{2\kappa_{1}} (\Omega, \mathbb{R})}
\|
\xi^{v_{1}, v_{3}} (s,x) 
\|_{L^{2\kappa_{2}} (\Omega, \mathbb{R}^{d})}
\|
\eta^{v_{2}} (s,x)  
\|_{L^{2\kappa_{3}} (\Omega, \mathbb{R}^{d})} \\
& \leq Ce^{-(\alpha_{1}+\alpha_{2}) s} 
\sup_{r \in [0,T]}
\left\| 
\mathcal{P}_{\gamma-2}(X^{x}_r) 
\right\|_{L^{2 \max\{ \kappa_{1}, \kappa_{2}\kappa_{4}\}}(\Omega, \mathbb{R})}
\|v_{1} \|_{L^{2\kappa_{2}\kappa_{5}} (\Omega, \mathbb{R}^{d})} 
\|v_{2} \|_{L^{2\kappa_{3}} (\Omega, \mathbb{R}^{d})} 
\|v_{3} \|_{L^{2\kappa_{2}\kappa_{6}} (\Omega, \mathbb{R}^{d})} .\\
\end{aligned}
\end{equation}
Then, we take $\kappa_{1}=\rho_{1}$, $\kappa_{2}\kappa_{5}=\rho_{2}$, $\kappa_{3}=\rho_{3}\rho_{5}$, $\kappa_{2}\kappa_{6}=\rho_{3}\rho_{6}$, $\kappa_{2}\kappa_{4}=\rho_{3}\rho_{4}$. It is obvious that
\begin{equation}
\tfrac{1}{\rho_{3}} = 
\tfrac{1}{\kappa_{3}} + \tfrac{1}{\kappa_{2}\kappa_{4}} 
+ \tfrac{1}{\kappa_{2}\kappa_{6}} 
= 1-\tfrac{1}{\rho_{1}} - \tfrac{1}{\rho_{2}}  
= 1- \tfrac{1}{\kappa_{1}}-\tfrac{1}{\kappa_{2}\kappa_{5}},
\end{equation}
which also leads to
\begin{equation}
\tfrac{1}{\kappa_{1}}
+\tfrac{1}{\kappa_{3}} 
+ \tfrac{1}{\kappa_{2}\kappa_{4}} 
+\tfrac{1}{\kappa_{2}\kappa_{5}} 
+\tfrac{1}{\kappa_{2}\kappa_{6}} 
= \tfrac{1}{\kappa_{1}}+\tfrac{1}{\kappa_{3}} 
+ \tfrac{1}{\kappa_{2}}
\left(
\tfrac{1}{\kappa_{4}} + \tfrac{1}{\kappa_{5}} + \tfrac{1}{\kappa_{6}} 
\right)  = 1.
\end{equation}
This implies that there is a one-to-one correspondence between $\rho_{i}$ and $\kappa_{i}$, $i\in\{1,2, \cdots,6 \}$.
About the fourth item in \eqref{eq:remaining-term-H}, we deduce by Assumption \ref{assumption:growth-condition-of-frechet-derivatives-of-drift-and-diffusion} and the H\"older inequality, for some positive constants $\rho'_{1}$, $\rho'_{2}$, $\rho'_{3}$ and $\rho'_{4}$ with  $1/\rho'_{1} + 1/\rho'_{2}  + 1/\rho'_{3} + 1/\rho'_{4} = 1$,
\begin{equation} 
\begin{aligned}
&\left\| 
D^{3}F( X^{x}_s)\big(
\eta^{v_{1}} (s,x), \eta^{v_{2}} (s,x), \eta^{v_{3}} (s,x) 
\big)
\right\|_{L^{2}(\Omega, \mathbb{R}^{d})} \\
& \leq C  
\Big\|  
\mathcal{P}_{\gamma-3}(X^{x}_s)  \cdot 
\|\eta^{v_{1}} (s,x) \| \cdot 
\|\eta^{v_{2}} (s,x) \| \cdot 
\|\eta^{v_{3}} (s,x) \|  
\Big\|_{L^{2}(\Omega, \mathbb{R})} \\
& \leq C 
\left\| 
\mathcal{P}_{\gamma-3}(X^{x}_s) 
\right\|_{L^{2\rho'_{1}} (\Omega, \mathbb{R})}
\|
\eta^{v_{1}} (s,x) 
\|_{L^{2\rho'_{2}} (\Omega, \mathbb{R}^{d})}
\|
\eta^{v_{2}} (s,x) 
\|_{L^{2\rho'_{3}} (\Omega, \mathbb{R}^{d})}
\|
\eta^{v_{3}} (s,x) 
\|_{L^{2\rho'_{4}} (\Omega, \mathbb{R}^{d})}
\\
&\leq Ce^{-3\alpha_{1} s} 
\sup_{r \in [0,T]}
\left\|
\mathcal{P}_{\gamma-3}(X^{x}_r) 
\right\|_{L^{ 2\rho'_{1} }(\Omega, \mathbb{R})}  
\|v_{1} \|_{L^{2\rho'_{2}} (\Omega, \mathbb{R}^{d})} 
\|v_{2} \|_{L^{2\rho'_{3}} (\Omega, \mathbb{R}^{d})} 
\|v_{3} \|_{L^{2\rho'_{4}} (\Omega, \mathbb{R}^{d})} .\\
\end{aligned}
\end{equation}
Assuming, for example, $\rho'_{1} = \rho_{1}$ and $\rho'_{2} = \rho_{2}$, it is obvious for us to choose $\rho'_{3}$ and $\rho'_{4}$ satisfying $\rho_{3}\rho_{5} \geq \rho'_{3} $ and $\rho_{3}\rho_{6} \geq \rho'_{4} $. That is to say, the 
fourth term in the right hand side of \eqref{eq:remaining-term-H} can be controlled by the first three terms. 
Hence, by \eqref{eq:range-of-p0-in-lemma:differentiability-of-solutions}, we get, 
\begin{equation}
\begin{aligned}
&\max\left\{
\|
H( X^{x}_s) 
\|_{L^{2}(\Omega, \mathbb{R}^{d})}, \
\|
G_{j}( X^{x}_s) 
\|_{L^{2}(\Omega, \mathbb{R}^{d})} 
\right\} \\
&\leq Ce^{- \min\{\alpha_{1}+\alpha_{2} , 3\alpha_{1}\} s} 
\sup_{r \in [0,T]}
\left\| 
\mathcal{P}_{\gamma-2}(X^{x}_r) 
\right\|_{L^{2\max\{ \rho_{1}, \rho_{3}\rho_{4} \} }(\Omega, \mathbb{R})} 
\|v_{1} \|_{L^{2\rho_{2}} (\Omega, \mathbb{R}^{d})} 
\|v_{2} \|_{L^{2\rho_{3}\rho_{5}} (\Omega, \mathbb{R}^{d})} 
\|v_{3} \|_{L^{2\rho_{3}\rho_{6}} (\Omega, \mathbb{R}^{d})},
\end{aligned}
\end{equation}
where the analysis of $\|
G_{j}( X^{x}_s) 
\|_{L^{2}(\Omega, \mathbb{R}^{d})}$, $j\in \{1,\dots,m \}$,  is virtually identical
to the estimate of $\|
H( X^{x}_s) 
\|_{L^{2}(\Omega, \mathbb{R}^{d})}$ so that we omit it here.
Plugging these estimates with \eqref{eq:enhanced-coupled-condition} into \eqref{eq:ito-expansion-of-the-3rd-variation-process} yields, 
\begin{equation}
\begin{aligned}
&\mathbb{E} 
\left[
\|\zeta^{v_{1}, v_{2}, v_{3}} (
t \land \widetilde{\tau}^{(3)}_{n},x
) \| ^{2}
\right]
+(\alpha-\tilde{\epsilon}_{4})
\mathbb{E} 
\left[
\int_{0}^{t \land \widetilde{\tau}^{(3)}_{n}}
\|\zeta^{v_{1}, v_{2}, v_{3}} (s,x) \| ^{2}
\dd s
\right]
\\
&\leq  C_{\tilde{\epsilon}_{3}, \tilde{\epsilon}_{4}} 
\int_{0}^{t \land \widetilde{\tau}^{(3)}_{n}}
e^{-2 \min\{ \alpha_{1}+\alpha_{2} , 3\alpha_{1}\} s} 
\dd s \ \times \\
& \hspace{6em}
\sup_{r \in [0,T]}
\left\|
\mathcal{P}_{\gamma-2}(X^{x}_r) 
\right\|_{L^{2 \max\{ \rho_{1}, \rho_{3}\rho_{4} \}}(\Omega, \mathbb{R})}
\|v_{1} \|_{L^{2\rho_{2}} (\Omega, \mathbb{R}^{d})} 
\|v_{2} \|_{L^{2\rho_{3}\rho_{5}} (\Omega, \mathbb{R}^{d})} 
\|v_{3} \|_{L^{2\rho_{3}\rho_{6}} (\Omega, \mathbb{R}^{d})}.
\end{aligned}
\end{equation}
As a direct consequence of the Fatou lemma, Lemma \ref{lemma:ito}, Lemma \ref{lemma:uniform-moments-bound-of-SDEs} and \eqref{eq:range-of-p0-in-lemma:differentiability-of-solutions}, we have with $\alpha_{3} := \alpha - \tilde{\epsilon}_{4}$,
\begin{equation}
\begin{aligned}
&\|\zeta^{v_{1}, v_{2}, v_{3}} (t,x) \|_{L^{2} (\Omega, \mathbb{R}^{d})}\\
&\leq 
C_{\tilde{\epsilon}_{3}, \tilde{\epsilon}_{4}} 
e^{-\alpha_{3} t} 
\sup_{r \in [0,T]}
\left\| 
\mathcal{P}_{\gamma-2}(X^{x}_r) 
\right\|_{L^{2\max\{ \rho_{1}, \rho_{3}\rho_{4} \} }(\Omega, \mathbb{R})}
\|v_{1} \|_{L^{2\rho_{2}} (\Omega, \mathbb{R}^{d})} 
\|v_{2} \|_{L^{2\rho_{3}\rho_{5}} (\Omega, \mathbb{R}^{d})} 
\|v_{3} \|_{L^{2\rho_{3}\rho_{6}} (\Omega, \mathbb{R}^{d})}.
\end{aligned}
\end{equation}
The proof is completed.
\end{proof}
\subsection{Proof of Lemma \ref{lemma:estimate-of-u-and-its-derivatives}}
\label{proof-of-lemma:estimate-of-u-and-its-derivatives}
\begin{proof}[Proof of Lemma \ref{lemma:estimate-of-u-and-its-derivatives}]
As we know,
the first-order derivatives of $u(t,x)$ is 
\begin{equation}
\begin{aligned}
Du(t,x)   v_{1} = 
\mathbb{E} 
\left[ 
D\varphi \left( X^{x}_{t}\right)
\eta^{v_{1}}(t,x) 
\right].
\end{aligned}
\end{equation}
Hence, for $\varphi \in C^{3}_{b}(\mathbb{R}^{d})$, we obtain from Lemma \ref{lemma:uniform-moments-bound-of-SDEs}, Lemma \ref{lemma:differentiability-of-solutions} and the H\"older inequality that
\begin{equation}
\begin{aligned}
\left\| Du(t,x)   v_{1} 
\right\|_{L^{1}(\Omega, \mathbb{R})}  
\leq 
\| \varphi\|_{1} \cdot 
\left\| 
\eta^{v_{1}}(t,x) 
\right\|_{L^{2}(\Omega, \mathbb{R}^{d})}  
\leq Ce^{-\alpha_{1}t}  
\left\|  
v_{1}
\right\|_{L^{2}(\Omega, \mathbb{R}^{d})}.
\end{aligned}
\end{equation}
And the second-order derivatives of $u(t,x)$ goes to
\begin{equation}
\begin{aligned}
D^{2}u(t,x)   (v_{1}, v_{2}) 
= \mathbb{E} 
\left[
D\varphi \left( X^{x}_{t}\right)
\xi^{v_{1}, v_{2}}(t,x) 
\right] 
+ \mathbb{E} 
\left[
D^{2}\varphi \left( X^{x}_{t}\right) 
\big(\eta^{v_{1}}(t,x), \eta^{v_{2}}(t,x) \big)
\right].
\end{aligned}
\end{equation}
In a similar way, by \eqref{eq:range-of-p0-lemma:estimate-of-u-and-its-derivatives}, we get, 
\begin{equation}
\begin{aligned}
\left\| 
D^{2}u(t,x)   (v_{1}, v_{2}) 
\right\|_{L^{1}(\Omega, \mathbb{R})}  
&\leq
\| \varphi\|_{1} \cdot   
\left\|
\xi^{v_{1}, v_{2}}(t,x) 
\right\|_{L^{2}(\Omega, \mathbb{R}^{d})} 
+ \|\varphi\|_{2} \cdot  
\left\|
\eta^{v_{1}}(t,x) 
\right\|_{L^{2}(\Omega, \mathbb{R}^{d})}
\left\| 
\eta^{v_{2}}(t,x) 
\right\|_{L^{2}(\Omega, \mathbb{R}^{d})} \\
& \leq Ce^{-\tilde{\alpha}_{2}t} 
\left\| 
\mathcal{P}_{\gamma-2}(X^{x}_{r}) 
\right\|_{L^{2\rho_{1} }(\Omega, \mathbb{R})}
\left\| 
v_{1} 
\right\|_{L^{2\rho_{2} } (\Omega, \mathbb{R}^{d})}
\left\|
v_{2} 
\right\|_{L^{2\rho_{3} } (\Omega, \mathbb{R}^{d})},
\end{aligned}
\end{equation}
where $\tilde{\alpha}_{2}:= \min\{2\alpha_{1},\alpha_{2} \}$.
In addition, it follows
\begin{equation}\label{equation:expansion-of-u-3-times}
\begin{aligned}
& D^{3}u(t,x)   (v_{1}, v_{2}, v_{3}) \\
&= \mathbb{E} 
\left[
D\varphi \left( X^{x}_{t}\right)   
\zeta^{v_{1}, v_{2}, v_{3}}(t,x) 
\right] 
+ \mathbb{E} 
\left[
D^{2}\varphi \left( X^{x}_{t}\right)  
\big(
\xi^{v_{1}, v_{2}}(t,x), \eta^{v_{3}}(t,x) 
\big) 
\right] \\
& \quad + 
\mathbb{E} 
\left[
D^{2}\varphi \left( X^{x}_{t}\right)   
\big(
\xi^{v_{1}, v_{3}}(t), 
\eta^{v_{2}}(t,x) 
\big) 
\right] 
+ \mathbb{E} 
\left[
D^{2}\varphi \left( X^{x}_{t}\right)  
\big(
\eta^{v_{1}}(t,x),  
\xi^{v_{2}, v_{3}}(t,x) 
\big) 
\right] \\
&\quad +
\mathbb{E} 
\left[
D^{3}\varphi \left( X^{x}_{t}\right)  
\big(
\eta^{v_{1}}(t,x),  \eta^{v_{2}}(t,x), \eta^{v_{3}}(t,x)
\big) 
\right].\\
\end{aligned}
\end{equation}
As shown in the proof of Lemma \ref{lemma:differentiability-of-solutions}, the analysis from the second term to the fourth term in \eqref{equation:expansion-of-u-3-times} is equivalent and the estimate of the last term in the right hand side of \eqref{equation:expansion-of-u-3-times} can be bounded by the other terms. With Lemma \ref{lemma:uniform-moments-bound-of-SDEs}, Lemma \ref{lemma:differentiability-of-solutions} and the H\"older inequality, we get, for $\bar{\rho}_{1}, \bar{\rho}_{2}, \bar{\rho}_{3}, \rho_{1}, \rho_{2}, \rho_{3}, \rho_{4}, \rho_{5}, \rho_{6} >1$ satisfying
$1/\bar{\rho}_{1} + 1/\bar{\rho}_{2} + 1/\bar{\rho}_{3}=1$,
$1/\rho_{1} + 1/\rho_{2} + 1/\rho_{3}=1$,
$1/\rho_{4} + 1/\rho_{5} + 1/\rho_{6}=1$ and 
\eqref{eq:range-of-p0-lemma:estimate-of-u-and-its-derivatives},
\begin{equation}\label{equation:prior-estimate-of-D3u}
\begin{aligned}
&\left\|
D^{3}u(t,x)   (v_{1}, v_{2}, v_{3}) 
\right\|_{L^{1}(\Omega, \mathbb{R})} \\
& \leq 
\| \varphi\|_{1} \cdot 
\left\|  
\zeta^{v_{1}, v_{2}, v_{3}}(t,x)  
\right\|_{L^{2}(\Omega, \mathbb{R}^{d})} 
+ \|\varphi\|_{2} \cdot 
\left\| 
\xi^{v_{1}, v_{2}}(t,x) 
\right\|_{L^{2}(\Omega, \mathbb{R}^{d})} 
\left\| 
\eta^{v_{3}}(t,x) 
\right\|_{L^{2}(\Omega, \mathbb{R}^{d})} \\
& \quad + \|\varphi\|_{2} \cdot 
\left\| 
\xi^{v_{1}, v_{3}}(t,x) 
\right\|_{L^{2}(\Omega, \mathbb{R}^{d})} 
\left\| 
\eta^{v_{2}}(t,x) 
\right\|_{L^{2}(\Omega, \mathbb{R}^{d})} 
+ \|\varphi\|_{2} \cdot  
\left\| 
\xi^{v_{2}, v_{3}}(t,x) 
\right\|_{L^{2}(\Omega, \mathbb{R}^{d})} 
\left\| 
\eta^{v_{1}}(t,x) 
\right\|_{L^{2}(\Omega, \mathbb{R}^{d})}\\
&\quad + \|\varphi\|_{3} \cdot  
\left\| 
\eta^{v_{1}}(t,x) 
\right\|_{L^{2\bar{\rho}_{1}}(\Omega, \mathbb{R}^{d})}
\left\| 
\eta^{v_{2}}(t,x) 
\right\|_{L^{2\bar{\rho}_{2}}(\Omega, \mathbb{R}^{d})} 
\left\| 
\eta^{v_{3}}(t,x) 
\right\|_{L^{2\bar{\rho}_{3}}(\Omega, \mathbb{R}^{d})}.\\
\end{aligned}
\end{equation}
If we take
$\bar{\rho}_{2} = \rho_{3}\rho_{5}$, $\bar{\rho}_{3} = \rho_{3}\rho_{6}$, then
\begin{equation}
\begin{aligned}
\tfrac{1}{\bar{\rho}_{1}}
= 1- \tfrac{1}{\rho_{3}\rho_{5}} - \tfrac{1}{\rho_{3}\rho_{6}}
=1-\tfrac{1}{\rho_{3}}(1-\tfrac{1}{\rho_{4}})
=\tfrac{1}{\rho_{1}} + \tfrac{1}{\rho_{2}} + \tfrac{1}{\rho_{3} \rho_{4}} \geq \tfrac{1}{\rho_{2}},
\end{aligned}
\end{equation}
leading to $\bar{\rho}_{1} \leq \rho_{2}$.
Combining this with \eqref{equation:prior-estimate-of-D3u} and Lemma \ref{lemma:differentiability-of-solutions} yields
\begin{equation}
\begin{aligned}
&\left\|
D^{3}u(t,x)   (v_{1}, v_{2}, v_{3}) 
\right\|_{L^{1}(\Omega, \mathbb{R})} \\
&\leq C e^{-\tilde{\alpha}_{3}t} 
\sup_{r \in [0,T]}
\left\|
\mathcal{P}_{\gamma-2}(X^{x}_{r}) 
\right\|_{L^{2\max\{ \rho_{1}, \rho_{3}\rho_{4} \} }(\Omega, \mathbb{R})} 
\|v_{1} \|_{L^{2\rho_{2}} (\Omega, \mathbb{R}^{d})} 
\|v_{2} \|_{L^{2\rho_{3}\rho_{5}} (\Omega, \mathbb{R}^{d})} 
\|v_{3} \|_{L^{2\rho_{3}\rho_{6}} (\Omega, \mathbb{R}^{d})},
\end{aligned}
\end{equation}
where  $\tilde{\alpha}_{3}:= \min\{3\alpha_{1}, \alpha_{1}+\alpha_{2}, \alpha_{3} \}$.
 The proof is completed.
\end{proof}
\subsection{Proof of Lemma \ref{lemma:holder-continuity-of-the-process-z}}
\label{proof-of-lemma:holder-continuity-of-the-process-z}
\begin{proof}[Proof of Lemma \ref{lemma:holder-continuity-of-the-process-z}]
In view of \eqref{equation:power-property-of-Brownian-motion}, \eqref{intro:continuous-version-of-the-numerical-scheme},  and the triangle inequality,
we obtain that, for $t\in  [t_{n}, t_{n+1}]$, $n\in \{0,1,\dots,N-1 \}$, $N\in \mathbb{N}$,
\begin{equation}
\begin{aligned}
\|\mathbb{Z}^{n}(t) \|_{L^{2p}(\Omega, \mathbb{R}^{d})} 
&\leq 
\|\mathbb{Z}^{n}(t_{n}) \|_{L^{2p}(\Omega, \mathbb{R}^{d})}
+ (t-t_{n}) \|F(\mathscr{P}(Y_{n})) \|_{L^{2p}(\Omega, \mathbb{R}^{d})} \\
&\quad
+ (t-t_{n})^{\frac{1}{2}} \|g(\mathscr{P}(Y_{n})) \|_{L^{2p}(\Omega, \mathbb{R}^{d\times m})}. \\
\end{aligned}
\end{equation}
According to Lemma \ref{lemma:necessary-estimates}, it suffices to show that
\begin{equation}
(t-t_{n}) 
\|
F(\mathscr{P}(Y_{n})) 
\|_{L^{2p}(\Omega, \mathbb{R}^{d})} 
\leq C_{A}h^{\frac{1}{2}},
\end{equation}
and
\begin{equation}
(t-t_{n})^{\frac{1}{2}} \|g(\mathscr{P}(Y_{n})) \|_{L^{2p}(\Omega, \mathbb{R}^{\times m})} \leq  Ch^{\frac{1}{4}}   (1 + \|\mathscr{P}(Y_{n}) \|_{L^{2p}(\Omega, \mathbb{R}^{d})}).
\end{equation}
Hence, one obtains from Lemma \ref{lemma:uniform-moments-bound-of-the-LTPE-method} that
\begin{equation}
\|
\mathbb{Z}^{n}(t) 
\|_{L^{2p}(\Omega, \mathbb{R}^{d})} 
\leq  C_{A} (1 + \|x_{0} \|_{L^{2p}(\Omega, \mathbb{R}^{d})}).
\end{equation}
For the estimate of \eqref{equation:holder-continuty-of-the-auxiliary-process-Z}, the proof is obvious due to \eqref{equation:moments-bound-of-the-auxiliary-process-Z} and  Assumption \ref{assumption:growth-condition-of-frechet-derivatives-of-drift-and-diffusion}  with $p \in [1,p_{0}/\gamma]$. The proof is completed.
\end{proof}

\subsection{Proof of Lemma \ref{lemma:error-estimate-between-x-and-projected-x}}
\label{subsection:proof-of-lemma:error-estimate-between-x-and-projected-x}
\begin{proof}[Proof of Lemma \ref{lemma:error-estimate-between-x-and-projected-x}]
Consider these two measurable sets
\begin{equation}
\mathcal{A}_{h} := 
\left\{
\omega \in \Omega : \|\zeta(\omega) \| 
\leq h^{-\frac{1}{2\gamma}} 
\right\}, \quad 
\mathcal{A}_{h}^{c} := \Omega \backslash \mathcal{A}_{h} .
\end{equation}
Therefore, owing to the H\"older inequality, for $1/q+1/q'=1$, we obtain
\begin{equation}
\begin{aligned}
\mathbb{E}
\left[
\|\zeta-\mathscr{P}(\zeta) \|^{2}
\right] 
= \mathbb{E}
\left[
\|\zeta-\mathscr{P}(\zeta) \|^{2} \textbf{1}_{\mathcal{A}_{h}^{c}}
\right] 
\leq
\|
\zeta-\mathscr{P}(\zeta) 
\|^{2}_{L^{2q} (\Omega, \mathbb{R}^{d})}
\|
\textbf{1}_{\mathcal{A}_{h}^{c}} 
\|_{L^{q'} (\Omega, \mathbb{R})}.
\end{aligned}
\end{equation}
Here, using Lemma \ref{lemma:necessary-estimates} with the triangular inequality yields
\begin{equation}
\begin{aligned}
\|
\zeta-\mathscr{P}(\zeta) 
\|^{2}_{L^{2q} (\Omega, \mathbb{R}^{d})} 
\leq
\|\zeta \|^{2}_{L^{2q} (\Omega, \mathbb{R}^{d})} 
+ \|
\mathscr{P}(\zeta) 
\|^{2}_{L^{2q} (\Omega, \mathbb{R}^{d})} 
\leq 2\|\zeta \|^{2}_{L^{2q} (\Omega, \mathbb{R}^{d})}.
\end{aligned}
\end{equation}
In addition, it follows from the Markov inequality that,
\begin{equation}
\begin{aligned}
\|
\textbf{1}_{\mathcal{A}_{h}^{c}} 
\|_{L^{q'} (\Omega, \mathbb{R})} 
=\big(
\mathbb{P}(\mathcal{A}_{h}^{c})
\big)^{\frac{1}{q'}} 
\leq h^{\frac{\beta}{2\gamma q'}}
\|\zeta \|^{\frac{\beta}{q'}}_{L^{\beta} (\Omega, \mathbb{R}^{d})}.
\end{aligned}
\end{equation}
We choose $q=4\gamma+1$, $q'=1+1/4\gamma$ and $\beta=8\gamma+2$, then the proof is completed.

\end{proof}

\end{document}